\newtheorem{thm}{Theorem}[section]
\newtheorem*{thm*}{Theorem}
\newtheorem*{cor*}{Corollary}
\newtheorem*{prop*}{Proposition}
\newtheorem{cor}[thm]{Corollary}
\newtheorem{prop}[thm]{Proposition}
\newtheorem{lem}[thm]{Lemma}
\theoremstyle{definition}
\newtheorem{defn}[thm]{Definition}
\newtheorem*{notn*}{Notation}
\theoremstyle{remark}
\newtheorem{rem}[thm]{Remark}
\newtheorem*{idea*}{Idea}
\newcommand{\Spec}{{\rm Spec}}
\let\c@equation\c@thm
\numberwithin{thm}{section}
\numberwithin{equation}{section}
\title[Moduli Spaces of Coherent Sheaves on Projective DM Stacks over Algebraic Spaces]{Moduli Spaces of Coherent Sheaves on Projective Deligne-Mumford Stacks over Algebraic Spaces}
\author{Hao Sun}
\begin{document}
\pagenumbering{arabic}
\maketitle
\begin{abstract}
In this paper, we study the geometric invariant theory on algebraic spaces, and construct the moduli space of $\mathcal{H}$-semistable sheaves on projective Deligne-Mumford stacks over algebraic spaces $S$. We prove that this moduli space is projective over $S$ as an algebraic space.
\end{abstract}

\flushbottom



\renewcommand{\thefootnote}{\fnsymbol{footnote}}
\footnotetext[1]{MSC2010 Class: 14A20, 14D20, 14D23}
\footnotetext[2]{Key words: moduli space, projective Deligne-Mumford stack, geometric invariant theory}

\section{Introduction}
Quot-functors on Deligne-Mumford stacks were studied by M. Olsson and J. Starr, which were proved to be representable by algebraic spaces \cite{OlSt}. Later on, F. Nironi constructed the moduli space of coherent sheaves on projective Deligne-Mumford stacks \cite{Nir}. As a special case of Deligne-Mumford stacks, the moduli space of locally free sheaves on orbifolds was understood as the moduli space parabolic bundles on its underlying space \cite{MehSes}. F. Nironi's approach gives a new construction of the moduli space of coherent sheaves on orbifolds with respect to a given generating sheaf. Based on Olsson-Starr-Nironi's work, people studied the moduli problem of framed bundles, Hitchin pairs and $\Lambda$-modules on (projective) Deligne-Mumford stacks, and constructed the corresponding moduli spaces \cite{BSP,Sun20194,Sun2020}.

In this paper, we construct the moduli spaces of $\mathcal{H}$-semistable coherent sheaves on Deligne-Mumford stacks $\mathcal{X}$ over algebraic spaces $S$. This construction is a generalization of M. Olsson, J. Starr \cite{OlSt} and F. Nironi's \cite{Nir} work.

Let $\mathcal{X}$ be a Deligne-Mumford stack over an algebraic space $S$. Denote by ${\rm \widetilde{Q}}(\mathcal{G},\mathcal{X})$ the quot-functor of coherent sheaves on $\mathcal{X}$, where $\mathcal{G}$ is a coherent sheaf on $\mathcal{X}$. M. Olsson and J. Starr proved that the quot-functor ${\rm \widetilde{Q}}(\mathcal{G},\mathcal{X})$ is represented by an algebraic space ${\rm Q}(\mathcal{G},\mathcal{X})$ \cite[Theorem 1.1]{OlSt}. Suppose that $S$ is an affine scheme or a noetherian scheme of finite type, and $\mathcal{X}$ is a projective Deligne-Mumford stack over $S$. The quot-space ${\rm Q}(\mathcal{G},\mathcal{X},P) \subseteq {\rm Q}(\mathcal{G},\mathcal{X})$ is a projective scheme over $S$ (see \cite[Theorem 1.5]{OlSt} and \cite[Theorem 2.17]{Nir}), where $P$ is an integer polynomial (as the modified Hilbert polynomial). To prove this result, the authors constructed a closed embedding
\begin{align*}
F_{\mathcal{E}}: {\rm Q}(\mathcal{G},\mathcal{X},P) \rightarrow {\rm Q}(F_{\mathcal{E}}(\mathcal{G}),X,P),
\end{align*}
where $\pi: \mathcal{X} \rightarrow X$ is the morphism to the coarse moduli space $X$ (as a projective scheme over $S$), $\mathcal{E}$ is a generating sheaf on $\mathcal{X}$ and $F_{\mathcal{E}}(\mathcal{F}):= \pi_*(\mathcal{F} \otimes \mathcal{E}^{\vee})$. Indeed, the existence of the generating sheaf plays an important role to construct this morphism \cite[Lemma 6.1 and Proposition 6.2]{OlSt}. This result can be generalized directly to the case that $\mathcal{X}$ is a projective Deligne-Mumford stack over an algebraic space $S$, and then ${\rm Q}(\mathcal{G},\mathcal{X})$ is an algebraic space projective over $S$ (see Proposition \ref{306}).

M. Olsson, J. Starr and F. Nironi's constructed the quot-functors and moduli spaces with respect to the following conditions:
\begin{itemize}
\item $S$ is an affine scheme or a noetherian scheme of finite type over an algebraically closed field $k$;
\item $\mathcal{X}$ is a projective Deligne-Mumford stack over $S$;
\item $\mathcal{H}$ is a generating sheaf.
\end{itemize}
In this paper, we generalize the above conditions to the following ones and construct the moduli space $\mathcal{H}$-semistable sheaves on $\mathcal{X}$.
\begin{itemize}
\item $S$ is an algebraic space of finite type over an algebraically closed field $k$;
\item $\mathcal{X}$ is a projective Deligne-Mumford stack over $S$ (see Definition \ref{303});
\item $\mathcal{H}$ is a locally free sheaf on $\mathcal{X}$.
\end{itemize}
Here are several problems we are faced when realizing this generalization.
\begin{enumerate}
\item Since $S$ is an algebraic space, it is easy to find that the quot-space ${\rm Q}(\mathcal{G},\mathcal{X},P)$ is an algebraic space (not necessarily to be a scheme). Therefore, we have to study the geometric invariant theory on algebraic spaces first.
\item With the same reason, we have to define the boundedness of families of sheaves over algebraic spaces.
\item Note that $\mathcal{H}$ is a locally free sheaf. Although the morphism
    \begin{align*}
        F_{\mathcal{H}}: {\rm Q}(\mathcal{G},\mathcal{X},P) \rightarrow {\rm Q}(F_{\mathcal{H}}(\mathcal{G}),X,P)
    \end{align*}
    still exists, this morphism may not be injective. Therefore, we have to find a subset ${\rm Q}^{\mathcal{H}}(\mathcal{G},\mathcal{X},P) \subseteq {\rm Q}(\mathcal{G},\mathcal{X},P)$ such that the restriction $F_{\mathcal{H}}|_{{\rm Q}^{\mathcal{H}}(\mathcal{G},\mathcal{X},P)}$ is a monomorphism.
\end{enumerate}
We construct the moduli space of $\mathcal{H}$-semistable sheaves on projective Deligne-Mumford stacks over algebraic spaces by solving the above problems.

As we discussed above, the first step is studying the geometric invariant theory on algebraic spaces. In \S 2, we define $G$-actions on algebraic spaces $X$. We consider an algebraic space $X$ as a groupoid $U_0/U_1$, and a $G$-action on $X$ is defined as a $G$-action on $U_0$ satisfying some compatible conditions (see Definition \ref{201} and \ref{202}). Denote by $(X,\sigma)$ an algebraic with a $G$-action $\sigma$. Then, we study the categorical quotient of a $G$-action $(X,\sigma)$ in the category of algebraic spaces. We find that a categorical quotient exists, if there is a categorical quotient of $(U,\sigma_u)$ (in the category of schemes) under some additional conditions.
\begin{thm}[Theorem \ref{206}]
Let $X$ be an algebraic space with a $G$-action $\sigma: G \times_S X \rightarrow X$. Let $(U,u)$ be a local chart of $X$, and denote by $\sigma_u$ the induced action on $U$. If there exists a categorical quotient $\phi: U \rightarrow V$ with respect to $\sigma_u$ satisfying the following conditions
\begin{enumerate}
\item $\phi$ is surjective;
\item the induced morphism $V_1 \rightarrow V \times_S V$ is injective;
\item the morphisms $(s,t): V_1 \rightarrow V_0$ are \'etale,
\end{enumerate}
where $V_1$ is the image of the following map
\begin{align*}
U \times_X U \hookrightarrow U \times_S U \xrightarrow{\phi \times_S \phi} V \times_S V,
\end{align*}
then there exists a (universal) categorical quotient of $(X,\sigma)$.
\end{thm}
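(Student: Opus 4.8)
\section*{Proof proposal}

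The plan is to realize the quotient as an algebraic space presented by the étale groupoid $V_1 \rightrightarrows V$. Write $X$ as the quotient of its defining étale groupoid $R := U \times_X U \rightrightarrows U$, where both projections are étale and $u : U \to X$ is the given chart; by the definition of a $G$-action on an algebraic space (Definitions \ref{201} and \ref{202}), the action $\sigma$ is encoded by compatible $G$-actions on $U$ and on $R$, the one on $U$ being $\sigma_u$. I would then take $V_0 := V$ together with the subspace $V_1 \hookrightarrow V \times_S V$ of the statement, equipped with its two projections $(s,t)$, as a candidate presentation of the quotient, and define $Y := V_0 / V_1$.

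First I would check that $V_1 \rightrightarrows V_0$ is an étale equivalence relation, so that $Y$ is an algebraic space. Condition (2) makes $V_1 \to V \times_S V$ a monomorphism, so $V_1$ is a relation on $V$, and condition (3) makes its two projections étale. Reflexivity, symmetry and transitivity I would descend from the groupoid structure of $R$: the unit $U \to R$, the inverse $R \to R$ and the composition law $R \times_U R \to R$ push forward under $\phi \times_S \phi$ to the diagonal, the flip and the composition on $V \times_S V$, and, using surjectivity of $\phi$ (condition (1)), their images land in $V_1$ and exhibit the equivalence-relation axioms. Since $\phi$ carries $R$ into $V_1$ by construction, $(\phi, \phi|_R)$ is a morphism of groupoids $(R \rightrightarrows U) \to (V_1 \rightrightarrows V)$, hence induces $\psi : X = U/R \to V/V_1 = Y$; and because $\phi$ is $\sigma_u$-invariant, checking on the chart $U$ (where $u$ is an epimorphism) shows $\psi$ is $\sigma$-invariant, i.e. $\psi \circ \sigma = \psi \circ \mathrm{pr}_X$.

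The substance is the universal property. Let $f : X \to Z$ be a $\sigma$-invariant morphism to an algebraic space $Z$; I must produce a unique $g : Y \to Z$ with $g \circ \psi = f$. Pulling back along $u$, this amounts to factoring the $\sigma_u$-invariant morphism $f \circ u : U \to Z$ through $\phi : U \to V$, say as $\bar{h} : V \to Z$. Such a $\bar{h}$ automatically coequalizes $V_1 \rightrightarrows V$: since $R \to V_1$ is an epimorphism onto the image $V_1$ and the two projections of $V_1$ pull back to $\phi \circ s_R$ and $\phi \circ t_R$, the equality $\bar{h} \circ s = \bar{h} \circ t$ follows from $f \circ u$ already coequalizing $R$. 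Hence $\bar{h}$ descends to the desired $g : Y \to Z$, with $g \circ \psi = f$ and uniqueness both following from surjectivity of $\phi$ and $\psi$. When $Z$ is a scheme, the initial factorization of $f \circ u$ through $\phi$ is exactly the defining property of the categorical quotient $\phi$.

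The main obstacle is the case of a general algebraic-space target $Z$, where the universal property of $\phi$ does not apply directly. Here I would choose an étale presentation $q : W \to Z$ and form $\widetilde{U} := U \times_{Z} W$ (fibre product via $f \circ u$ and $q$); invariance of $f \circ u$ lets $G$ act on $\widetilde{U}$ with the projection $\widetilde{U} \to W$ invariant. Using that the categorical quotient $\phi$ is compatible with the étale base change $\widetilde{U} \to U$ — which is where the étale nature of the presentation and conditions (2)--(3) re-enter — one obtains a categorical quotient $\widetilde{U} \to \widetilde{V}$ with $\widetilde{V} \to V$ étale surjective and an induced morphism $\widetilde{V} \to W \to Z$. Étale descent of this morphism along $\widetilde{V} \to V$ then yields $\bar{h} : V \to Z$, closing the gap. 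Finally, universality (stability under base change $S' \to S$) I expect to follow formally, since the groupoid $R$, the quotient $\phi$, the image $V_1$ and the presentation $V_1 \rightrightarrows V$ all commute with base change; the only point to watch is that forming $V_1$ and $V/V_1$ continue to behave well after base change, which reduces once more to the compatibility of $\phi$ with base change assumed for a universal categorical quotient.
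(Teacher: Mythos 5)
Your construction of the quotient is the same as the paper's: you form $Y = V_0/V_1$, verify (more carefully than the paper does) that $V_1 \rightrightarrows V_0$ is an \'etale equivalence relation using conditions (1)--(3), and produce the invariant morphism $X \to Y$. The divergence, and the problem, is in how you verify the universal property against a general algebraic-space target $Z$.

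The gap is your final paragraph. You invoke ``the categorical quotient $\phi$ is compatible with the \'etale base change $\widetilde{U} := U \times_Z W \to U$'' to produce a categorical quotient $\widetilde{U} \to \widetilde{V}$ with $\widetilde{V} \to V$ \'etale surjective. Nothing in the hypotheses gives this. Stability under base change is precisely what distinguishes \emph{universal} (resp. \emph{uniform}) categorical quotients from plain ones (Definition \ref{205}), and the theorem assumes only that $\phi: U \to V$ is a categorical quotient in the category of schemes satisfying $(\ast)$; indeed, the universality of the resulting quotient of $X$ is part of the conclusion, not the input. Worse, even if $\phi$ were assumed uniform or universal, that property covers base changes $V' \to V$ on the quotient side, pulled back to $U \times_V V'$; your cover $\widetilde{U} \to U$ is pulled back from $Z$ via $f \circ u$, and there is no reason it should be of the form $U \times_V V'$ for any \'etale $V' \to V$ --- that would require $\widetilde{U} \to U$ to be trivialized along $G$-orbits. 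So the object $\widetilde{V}$, its \'etale surjection onto $V$, and the subsequent descent of $\bar{h}$ are all unjustified, and this is exactly the step carrying the weight of the proof.

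The paper takes a different route here that avoids any base change. Following its convention from \S 2.1, a morphism $\psi: X \to Z$ with $Z = W_0/W_1$ is given by a compatible pair $(\psi_0, \psi_1)$ with $\psi_0: U_0 \to W_0$ and $\psi_1: U_1 \to W_1$ on the fixed presentations. Since $W_0$ is a \emph{scheme}, the scheme-level categorical quotient property of $\phi_0: U_0 \to V_0$ applies directly to the invariant morphism $\psi_0$ and yields a unique $\varphi_0: V_0 \to W_0$. The remaining work is to define $\varphi_1: V_1 \to W_1$ by $\varphi_1(\phi_1(u_1)) := \psi_1(u_1)$, whose well-definedness follows from the commutativity of diagram \eqref{eq205} together with the fact that $W_1 \hookrightarrow W_0 \times_S W_0$ is a monomorphism: two points of $U_1$ with the same image in $V_1$ have equal images in $W_0 \times_S W_0$, hence equal images in $W_1$. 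Uniqueness of $\varphi = (\varphi_0, \varphi_1)$ then follows because $\varphi_1$ is determined by $\varphi_0$ and $\varphi_0$ is unique. If you want to salvage your argument, you should replace the base-change step by this reduction to scheme targets via presentations of $Z$ (or else strengthen the hypothesis on $\phi$ and prove that the relevant \'etale cover descends to $V$, which is a substantial additional claim).
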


After that, in \S 2.4, we study the geometric quotient of $(X,\sigma)$ in the category of algebraic spaces, and prove that a geometric quotient is also a categorical quotient (see Proposition \ref{211}). In \S 2.5, we define semistable and stable points on an algebraic space $X$ with respect to a given $G$-linearized invertible sheaf $L$ on $X$ (see Definition \ref{216}), and prove the following theorem, which is an analogue of \cite[Theorem 1.10]{MuFoKir}.
\begin{thm}[Theorem \ref{217}]
Let $X$ be a quasi-compact algebraic space over $S$ with a group action $\sigma: G \times_S X \rightarrow X$. Given a $G$-linearized invertible sheaf $L$ on $X$, there exists a uniform categorical quotient $(Y^{ss}(L),\phi)$ of $X^{ss}(L)$. Furthermore, there is an open subset $Y^{s}(L) \subseteq Y^{ss}(L)$ such that $X^{s}(L)=\phi^{-1}(Y^{s}(L))$ and $Y^s(L)$ is a uniform good quotient of $X^s(L)$.
\end{thm}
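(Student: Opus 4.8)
The plan is to reduce the statement to the classical geometric invariant theory on schemes and then to descend the resulting quotient along an \'etale chart by means of Theorem \ref{206}.

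Concretely, I would first fix a local chart $(U,u)$ of $X$, so that $U$ is a scheme and $u\colon U\to X$ is an \'etale surjection; the groupoid presentation then writes $X$ as $U_0/U_1$ with $U_1=U\times_X U$, and the $G$-action $\sigma$ restricts to an action $\sigma_u$ on $U$ together with a compatible action on $U_1$. Pulling back $L$ gives a $G$-linearized invertible sheaf $L_U:=u^*L$ on $U$. The key compatibility to record at the outset is that semistability and stability (Definition \ref{216}) are formulated in terms of $L$ and the $G$-orbits, so that, because $u$ is \'etale and $G$-equivariant, one has $U^{ss}(L_U)=u^{-1}(X^{ss}(L))$ and $U^{s}(L_U)=u^{-1}(X^{s}(L))$. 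This identification is what licenses the reduction to the chart.

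Next I would apply the classical result \cite[Theorem 1.10]{MuFoKir} to the $G$-action on the scheme $U$ (this is where the reductivity of $G$ enters): there is a uniform categorical quotient $\phi_U\colon U^{ss}(L_U)\to V$, an open subscheme $V^s\subseteq V$ with $U^{s}(L_U)=\phi_U^{-1}(V^s)$, and $\phi_U$ restricts to a uniform geometric quotient on $U^{s}(L_U)$. In particular $\phi_U$ is surjective, which is condition (1) of Theorem \ref{206}. It then remains to verify conditions (2) and (3) for the groupoid $V_1\rightrightarrows V$, where $V_1$ is the image of
\begin{align*}
U^{ss}(L_U)\times_X U^{ss}(L_U)\hookrightarrow U^{ss}(L_U)\times_S U^{ss}(L_U)\xrightarrow{\phi_U\times_S\phi_U} V\times_S V.
\end{align*}
This is the heart of the argument and the main obstacle: the injectivity of $V_1\to V\times_S V$ and, above all, the \'etaleness of the two projections $(s,t)\colon V_1\to V$. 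The difficulty is that GIT quotients contract orbits and are therefore not \'etale in general; what rescues the argument is that the groupoid maps $U_1\to U_0$ are \'etale and $G$-equivariant, so that the groupoid structure is transverse to the $G$-orbits. I would make this precise by showing that the square relating the \'etale chart $u$ and the good quotient $\phi_U$ becomes cartesian after passing to $G$-saturations, so that the \'etale structure of $U_1\to U_0$ descends to an \'etale structure of $V_1\to V$; injectivity follows similarly from separatedness of the original relation together with the fact that $\phi_U$ separates closed orbits.

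Granting conditions (1)--(3), Theorem \ref{206} yields a universal categorical quotient $\phi\colon X^{ss}(L)\to Y^{ss}(L)$ with $Y^{ss}(L)=V/V_1$ realized as an algebraic space. Uniformity of $(Y^{ss}(L),\phi)$ follows by combining the uniformity of the scheme-level quotient $\phi_U$ with the compatibility of the groupoid-quotient construction of Theorem \ref{206} with flat base change on $S$. Finally, for the stable locus I would take $Y^{s}(L)$ to be the open algebraic subspace of $Y^{ss}(L)$ obtained by descending the $V_1$-saturated open subset $V^s\subseteq V$; the identity $X^{s}(L)=\phi^{-1}(Y^{s}(L))$ then descends from $U^{s}(L_U)=\phi_U^{-1}(V^s)$, and restricting the construction above shows that $\phi|_{X^{s}(L)}$ is a uniform good quotient, since the geometric-quotient property on $U^{s}(L_U)$ propagates through the same groupoid machinery.
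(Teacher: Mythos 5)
Your proposal rests on a reading of semistability that is not the one this paper uses, and the mismatch is fatal to the strategy. In Definition \ref{216}, a point $x$ is semistable only if there is a $G$-invariant section $r \in H^0(X,L^n)$ with $r(x) \neq 0$ \emph{such that $X_r$ already admits a uniform categorical quotient}; this clause is the paper's replacement for the affineness requirement of \cite[Definition 1.7]{MuFoKir} (see Remark \ref{218}). Consequently the existence of local quotients is definitional, no reductivity of $G$ is assumed anywhere in Theorem \ref{217} (the statement says only ``group action''), and the actual content of the theorem is gluing finitely many local quotients. Your plan instead applies \cite[Theorem 1.10]{MuFoKir} to the chart $U$, which needs $G$ reductive and the MFK notion of semistability; neither hypothesis is available here. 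The identification $U^{ss}(L_U)=u^{-1}(X^{ss}(L))$ that you say ``licenses the reduction to the chart'' is also unjustified: it compares two different notions of semistability, invariant sections of $u^*L^n$ on $U$ need not descend to $X$ (they need not be invariant under the relation $U_1$), and the existence of a uniform categorical quotient of the algebraic space $X_r$ is not equivalent to affineness of $U_r$.

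The second gap is the step you yourself flag as the heart of the matter: verifying condition $(\ast)$ of Theorem \ref{206}, i.e.\ injectivity of $V_1 \rightarrow V \times_S V$ and \'etaleness of $s,t\colon V_1 \rightarrow V$ for the relation induced on the chart-level quotient. Your proposed fix (the relevant square ``becomes cartesian after passing to $G$-saturations,'' so \'etaleness descends) is not an argument, and the claim is false in general: formation of categorical/good quotients does not commute with \'etale base change, precisely because quotients contract orbits, so an \'etale equivalence relation on $U$ can induce a non-\'etale, non-injective relation on $V$. This is exactly why the paper states $(\ast)$ as a \emph{hypothesis} of Theorem \ref{206} rather than proving it; making your route work would amount to assuming $(\ast)$, turning the theorem into a restatement of Theorem \ref{206}. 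The paper's proof sidesteps all of this: quasi-compactness gives finitely many invariant sections $s_1,\dots,s_n$ with $X^{ss}(L)=\cup_{i} X_{s_i}$; each $X_{s_i}$ has a uniform categorical quotient $(Y_{s_i},\phi_i)$ \emph{by Definition \ref{216}}; uniformity (flat base change to the open subsets $V_{s_{ij}} \subseteq V_{s_i}$) shows that $V_{s_{ij}}$ and $V_{s_{ji}}$ are both categorical quotients of the overlap, so the uniqueness clause of the universal property produces canonical gluing isomorphisms $\psi_{ij}$, and gluing the charts $V_{s_i}$ and the relations $V^1_{s_i}$ yields $Y^{ss}(L)=V/V^1$. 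The stable case is handled the same way, with the separatedness of the action on $X_r$ (built into Definition \ref{216}(2)) and Proposition \ref{214} upgrading the glued categorical quotient to a uniform good quotient.
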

Finally, we study the Hilbert-Mumford criterion (see Proposition \ref{221}) on algebraic spaces as an application of Theorem \ref{217}. J. Heinloth generalizes the Hilbert-Mumford criterion to algebraic stacks \cite{Hein}, and what we study in this subsection (\S 2.6) is a special case of J. Heinloth's work. At the end of this section, we apply the Hilbert-Mumford criterion to Grassmannian ${\rm Grass}_S(r,n)$ over an algebraic space $S$ (see Corollary \ref{222}).

In \S 3, we construct the moduli space of $\mathcal{H}$-semistable coherent sheaves on projective Deligne-Mumford stacks $\mathcal{X}$ over algebraic spaces $S$, and the construction is based on the geometric invariant theory we studied in \S 2. Denote by $\pi: \mathcal{X} \rightarrow X$ the morphism to its coarse moduli space $X$, which is an algebraic space projective over $S$. When $\mathcal{H}$ is a generating sheaf, M. Olsson and J. Starr constructed the following morphism
\begin{align*}
F_{\mathcal{H}} : & {\rm Q}(\mathcal{G},\mathcal{X},P) \rightarrow {\rm Q}(F_{\mathcal{H}}(\mathcal{G}),X,P),\\
& [\mathcal{G} \rightarrow \mathcal{F}] \mapsto [F_{\mathcal{H}}(\mathcal{G}) \rightarrow F_{\mathcal{H}}(\mathcal{F}) ].
\end{align*}
This morphism is a finitely-presented closed embedding. However, when $\mathcal{H}$ is a locally free sheaf, this map may not be injective. Therefore, we want to take an open set ${\rm Q}^{\mathcal{H}}(\mathcal{G},\mathcal{X},P)$ of ${\rm Q}(\mathcal{G},\mathcal{X},P)$ such that the map $F_{\mathcal{H}}|_{{\rm Q}^{\mathcal{H}}(\mathcal{G},\mathcal{X},P)}$ is injective. To construct this open set, we would like to work on the universal family of ${\rm Q}(\mathcal{G},\mathcal{X},P)$. We show that the universal family exists in this case.
\begin{thm}[Theorem \ref{307}]
There exists a universal family over $X \times_S {\rm Q}(\mathcal{G},\mathcal{X},P)$.
\end{thm}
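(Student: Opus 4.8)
The plan is to produce the family explicitly by pushing the tautological quotient forward through the functor $F_{\mathcal{H}}$, and then to verify that the result is a genuine flat family on the algebraic space $X\times_S{\rm Q}(\mathcal{G},\mathcal{X},P)$. Write ${\rm Q}={\rm Q}(\mathcal{G},\mathcal{X},P)$, $\mathcal{X}_{\rm Q}=\mathcal{X}\times_S{\rm Q}$ and $X_{\rm Q}=X\times_S{\rm Q}$, let $\pi_{\rm Q}:\mathcal{X}_{\rm Q}\to X_{\rm Q}$ be the base change of $\pi$ and $p:\mathcal{X}_{\rm Q}\to\mathcal{X}$ the projection. By the representability of the quot-functor (Proposition \ref{306}) there is a universal quotient
\begin{align*}
\mathcal{G}_{\rm Q}\longrightarrow \mathcal{F}\longrightarrow 0
\end{align*}
on $\mathcal{X}_{\rm Q}$ with $\mathcal{F}$ flat over ${\rm Q}$ and of modified Hilbert polynomial $P$. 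I would then define the candidate universal family on $X_{\rm Q}$ to be
\begin{align*}
\mathcal{F}_{\mathcal{H}} := (\pi_{\rm Q})_*\bigl(\mathcal{F}\otimes p^*\mathcal{H}^{\vee}\bigr),
\end{align*}
that is, the relative version of $F_{\mathcal{H}}$ applied to the universal $\mathcal{F}$.

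The substance of the proof is to check that $\mathcal{F}_{\mathcal{H}}$ is a coherent sheaf on $X_{\rm Q}$, flat over ${\rm Q}$, and whose formation commutes with arbitrary base change $T\to{\rm Q}$; this last property is exactly what makes it universal, since then the fibre of $\mathcal{F}_{\mathcal{H}}$ over the point classifying $[\mathcal{G}\to\mathcal{F}_s]$ is $F_{\mathcal{H}}(\mathcal{F}_s)$. The main tool is that, for a projective (tame) Deligne-Mumford stack, the coarse-moduli pushforward $\pi_*$ is exact on coherent sheaves and commutes with base change. Granting this, $\mathcal{F}\otimes p^*\mathcal{H}^{\vee}$ is again flat over ${\rm Q}$ because $\mathcal{H}^{\vee}$ is locally free, and exactness of $\pi_{\rm Q*}$ together with its base-change compatibility lets one compute the ${\rm Tor}$-groups of $\mathcal{F}_{\mathcal{H}}$ over the local rings of ${\rm Q}$ in terms of those of $\mathcal{F}\otimes p^*\mathcal{H}^{\vee}$, which vanish; this yields flatness, and the same base-change statement identifies the fibres with $F_{\mathcal{H}}(\mathcal{F}_s)$ and shows the construction is stable under pullback.

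The genuine obstacle is that ${\rm Q}$, $X$ and $S$ are only algebraic spaces, so the cited results of Olsson-Starr and Nironi, which are phrased for schemes, cannot be invoked directly, and a quasi-coherent sheaf on $X_{\rm Q}$ must be produced by descent. I would choose an \'etale atlas $X'\to X$ and an \'etale atlas ${\rm Q}'\to{\rm Q}$ by schemes; since the coarse-moduli morphism is compatible with flat base change, the pullback of $\pi$ over $X'\times_S{\rm Q}'$ is again the coarse-moduli morphism of a projective Deligne-Mumford stack over a scheme, where Nironi's construction produces the coherent, flat, base-change-compatible family. Because coherence, flatness and base-change compatibility are all \'etale-local on the base, these local families agree on overlaps and glue to a coherent sheaf $\mathcal{F}_{\mathcal{H}}$ on $X_{\rm Q}$ with the required properties. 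I expect the delicate point to be verifying the flatness of $\mathcal{F}_{\mathcal{H}}$ over ${\rm Q}$: it hinges on the exactness of $\pi_*$, so that pushforward commutes with restriction to fibres, and care is needed to ensure that the descent datum defining $\mathcal{F}_{\mathcal{H}}$ is precisely the one pushed down from the descent datum of the universal quotient $\mathcal{F}$.
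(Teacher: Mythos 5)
Your proposal contains a circularity at its foundation. The opening step asserts that ``by the representability of the quot-functor there is a universal quotient $\mathcal{G}_{\rm Q} \to \mathcal{F} \to 0$ on $\mathcal{X}_{\rm Q}$'' --- but this is precisely what Theorem \ref{307} is asserting, and it does not follow from representability in this setting. The functor $\widetilde{{\rm Q}}(\mathcal{G},\mathcal{X},P)$ is defined only on $({\rm Sch}/S)$, and $Q := {\rm Q}(\mathcal{G},\mathcal{X},P)$ is merely an algebraic space, so $\widetilde{{\rm Q}}(\mathcal{G},\mathcal{X},P)(Q)$ is not even defined; the usual ``identity map gives the universal family'' argument breaks down, which is exactly why the paper needs this theorem at all. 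Your final paragraph does recognize this obstacle and proposes the right remedy --- \'etale descent from an atlas --- and this is indeed the paper's method: take an \'etale surjection $q: Q_0 \to Q$ from a scheme, let $\mathcal{Q}_0 \in \widetilde{{\rm Q}}(\mathcal{G},\mathcal{X},P)(Q_0)$ be the tautological family classified by $q$ (this \emph{is} legitimate, since $Q_0$ is a scheme), observe that the two pullbacks $s^*\mathcal{Q}_0$ and $t^*\mathcal{Q}_0$ to $Q_1 = Q_0 \times_Q Q_0$ are canonically isomorphic, and descend to obtain $\mathcal{Q}$ on $\mathcal{X} \times_S Q$. But in your write-up the descent appears only as an afterthought meant to repair the pushforward construction, while the main body of your argument (flatness of the pushforward, cohomology and base change for $\pi_*$) is built on the sheaf $\mathcal{F}$ whose existence was never established.

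The second, equally serious gap is that you never verify the universal property, which is the bulk of the paper's proof. You write that base-change compatibility of the pushforward ``is exactly what makes it universal,'' but commutation of $\pi_*$ with base change is a cohomological statement; it does not show that for every scheme $T$ and every morphism $p: T \to Q$, the family $\mathcal{Q}_T$ classified by $p$ equals the pullback of the descended family. The paper proves this using the fact that $\widetilde{{\rm Q}}(\mathcal{G},\mathcal{X},P)$ is a sheaf in the \'etale topology: writing $T_{Q_i} = T \times_Q Q_i$, the exact sequence $0 \to \widetilde{Q}(T) \to \widetilde{Q}(T_{Q_0}) \rightrightarrows \widetilde{Q}(T_{Q_1})$ shows that the pullback of $\mathcal{Q}_0$ to $T_{Q_0}$ equalizes the two arrows and hence descends to a \emph{unique} element of $\widetilde{Q}(T)$, which must be $\mathcal{Q}_T$; this identification is what makes $\mathcal{Q}$ universal, and nothing in your sketch substitutes for it. Finally, note that despite the wording of the statement, the family the paper constructs (and uses later, e.g.\ to form $\theta_{\mathcal{H}}(\mathcal{Q})$ and its kernel in Proposition \ref{308}) lives on $\mathcal{X} \times_S Q$, i.e.\ on the stack; your object $(\pi_{\rm Q})_*(\mathcal{F}\otimes p^*\mathcal{H}^{\vee})$ on the coarse space cannot serve in its place, because when $\mathcal{H}$ is only locally free rather than generating, $\mathcal{F}$ cannot be recovered from $F_{\mathcal{H}}(\mathcal{F})$, so the pushforward family carries strictly less information than the later arguments require.
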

Usually, given a moduli problem $\widetilde{M}$, if $\widetilde{M}$ is represented by a scheme $M$, then the universal family corresponds to the identity map in $\widetilde{M}(M)={\rm Hom}(M,M)$. In our case,
\begin{align*}
{\rm \widetilde{Q}}(\mathcal{G},\mathcal{X}): ({\rm Sch}/S) \rightarrow {\rm Sets}
\end{align*}
is a functor from the big \'etale site of $S$-schemes to sets (more precisely, groupoids), and ${\rm Q}(\mathcal{G},\mathcal{X})$ is an algebraic space. It means that ${\rm \widetilde{Q}}(\mathcal{G},\mathcal{X}) ({\rm Q}(\mathcal{G},\mathcal{X}))$ is not well-defined. This is the reason why we have to construct the universal family in this case.

We define the following moduli problem
\begin{align*}
\widetilde{{\rm Q}}^{\mathcal{H}}(\mathcal{G},\mathcal{X},P): (\text{Sch}/S)^{{\rm op}} \rightarrow \text{Set}.
\end{align*}
For each $S$-scheme $T$, $\widetilde{{\rm Q}}^{\mathcal{H}}(\mathcal{G},\mathcal{X},P)(T) \subseteq \widetilde{{\rm Q}}(\mathcal{G},\mathcal{X},P)(T)$ consists of all quotients $[q: \mathcal{G}_T \rightarrow \mathcal{F}]$ such that the morphism $\theta_{\mathcal{H}}({\rm ker}(q))$ is surjective, where $\theta_{\mathcal{H}}$ is defined in Definition \ref{301}. We prove that $\widetilde{{\rm Q}}^{\mathcal{H}}(\mathcal{G},\mathcal{X},P)$ is representable by an open subset ${\rm Q}^{\mathcal{H}}(\mathcal{G},\mathcal{X},P) \subseteq {\rm Q}(\mathcal{G},\mathcal{X},P)$ (see Proposition \ref{308}). Furthermore, the restriction of the map $\mathcal{F}_{\mathcal{H}}|_{ {\rm Q}^{\mathcal{H}}(\mathcal{G},\mathcal{X},P)}$ is injective (see Proposition \ref{309}), and ${\rm Q}^{\mathcal{H}}(\mathcal{G},\mathcal{X},P) \rightarrow {\rm Q}(F_{\mathcal{H}}(\mathcal{G}),X,P)$ is a finitely-presented closed embedding (see Corollary \ref{310}).

In \S 3.3, we consider the boundedness of families of coherent sheaves over algebraic spaces. If $X \rightarrow S$ is a projective morphism of algebraic spaces, then a family $\mathscr{F}$ of coherent sheaves over $X \rightarrow S$ is equivalent to consider the corresponding family of sheaves on $X_U \rightarrow U$, where $U \rightarrow S$ is an \'etale surjective morphism and $X_U= X \times_S U$ (see Lemma \ref{312}). Therefore, the boundedness of families over algebraic spaces is equivalent to the boundedness of the corresponding families over schemes. Based on this fact, we show that the family of $\mathcal{H}$-semistable sheaves is bounded (see Corollary \ref{316} and \ref{317}).

In \S 3.4, we construct the moduli space of $\mathcal{H}$-semistable coherent sheaves with modified Hilbert polynomial $P$ on projective Deligne-Mumford stacks $\mathcal{X}$. In \S 3.3, we showed that the family of $\mathcal{H}$-semistable coherent sheaves is bounded. Then, there exists an integer $m$ such that $F_{\mathcal{H}}(\mathcal{F})$ is generated by global sections for any $\mathcal{H}$-semistable sheaves. There is an upper bound for the global sections of the family of $\mathcal{E}$-semistable sheaves on projective Deligne-Mumford stacks, where $\mathcal{E}$ is a generating sheaf (see \cite[Corollary 4.30]{Nir} and \cite[Corollary 3.13]{Sun2020}). Similarly, there is an upper bound for the family
\begin{align*}
\{h^0(\mathcal{X},\mathcal{F} \otimes \mathcal{H}^{\vee} \otimes \pi^* \mathcal{O}_X(m)) \text{ } | \text{ } & \mathcal{F} \text{ is a $\mathcal{H}$-semistable sheaf of dimension $d$} \\ & \text{ with modified Hilbert polynomial $P$}\}.
\end{align*}
Therefore, we can choose a positive integer $N$ large enough such that for any $\mathcal{H}$-semistable sheaf $\mathcal{F}$, we have
\begin{align*}
P(N) \geq P_{\mathcal{H}}(\mathcal{F},m)=h^0(X/S,F_{\mathcal{H}}(\mathcal{F})(m)).
\end{align*}
Denote by $V$ the linear space $S^{\oplus P(N)}$ and $\mathcal{G} \cong \mathcal{H} \otimes \pi^* \mathcal{O}_X(-N)$. We consider the quot-space ${\rm Q}(V \otimes \mathcal{G},\mathcal{X},P)$. We have the following morphism
\begin{align*}
{\rm Q}^{\mathcal{H}}(V \otimes \mathcal{G},\mathcal{X},P) \hookrightarrow {\rm Q}(F_{\mathcal{H}}(V \otimes \mathcal{G}),X,P),
\end{align*}
and if $N$ is large enough, we have the following embedding
\begin{align*}
    {\rm Q}( F_{\mathcal{H}}(V \otimes \mathcal{G}), X, P) \hookrightarrow {\rm Grass}(H^0(X/S,  F_{\mathcal{H}}(V \otimes \mathcal{G})(N) ),P(N)
    ).
\end{align*}
Therefore, there is a natural ${\rm SL}(V)$-action and a canonical invertible sheaf $\mathscr{L}_N$ on ${\rm Q}^{\mathcal{H}}(V \otimes \mathcal{G},\mathcal{X},P)$.

Denote by $Q^{\mathcal{H}}$ the subspace of ${\rm Q}^{\mathcal{H}}(V \otimes \mathcal{G},\mathcal{X},P)$ parametrizing quotients $[q: V \otimes \mathcal{G} \rightarrow \mathcal{F}]$ such that
\begin{enumerate}
\item the inducing morphism $\alpha: V \rightarrow H^0(X/S, F_{\mathcal{H}}(\mathcal{F})(N))$ is an isomorphism,
\item $\theta_{\mathcal{H}}( {\rm ker}(q) )$ is surjective.
\end{enumerate}
Let $Q_{ss}^{\mathcal{H}} \subseteq Q^{\mathcal{H}}$ be the subspace, if $[q: V \otimes \mathcal{G} \rightarrow \mathcal{F}]$ satisfies the following additional condition
\begin{enumerate}
\item[(3)] $\mathcal{F}$ is $\mathcal{H}$-semistable.
\end{enumerate}

We prove that if $[V \otimes \mathcal{G} \rightarrow \mathcal{F}] \in Q^{\mathcal{H}}$ is a semistable point (in the sense of GIT) if and only if $\mathcal{F}$ is a $\mathcal{H}$-semistable sheaf (see Theorem \ref{320}). This gives us the main theorem of the paper.

\begin{thm}[Theorem \ref{322}]
There exists a coarse moduli space $\mathcal{M}^{ss}(\mathcal{H},\mathcal{O}_X(1),P)$ of $S$-equivalence classes of $\mathcal{H}$-semistable sheaves with modified Hilbert polynomial $P$ on $\mathcal{X}$, and $\mathcal{M}^{ss}(\mathcal{H},\mathcal{O}_X(1),P)$ is an algebraic space projective over $S$.
\end{thm}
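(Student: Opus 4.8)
The plan is to realize $\mathcal{M}^{ss}(\mathcal{H},\mathcal{O}_X(1),P)$ as a geometric invariant theory quotient of a projective parameter algebraic space, applying the quotient construction of Theorem \ref{217} together with the semistability comparison of Theorem \ref{320}. Since all of the hard analytic input — representability and projectivity of the quot-space, the closed embedding $F_{\mathcal{H}}$, boundedness, and the GIT comparison — is already in place, the remaining task is to assemble these into a quotient and then verify the coarse moduli and projectivity properties.

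First I would fix the rigidification. By the boundedness of Corollaries \ref{316} and \ref{317}, the family of $\mathcal{H}$-semistable sheaves with modified Hilbert polynomial $P$ is bounded, so I may choose $N \gg 0$ such that for every such $\mathcal{F}$ the sheaf $F_{\mathcal{H}}(\mathcal{F})(N)$ is globally generated and $P(N)=h^0(X/S,F_{\mathcal{H}}(\mathcal{F})(N))$; put $V=S^{\oplus P(N)}$ and $\mathcal{G} \cong \mathcal{H} \otimes \pi^*\mathcal{O}_X(-N)$. On ${\rm Q}^{\mathcal{H}}(V \otimes \mathcal{G},\mathcal{X},P)$ the finitely-presented closed embedding of Corollary \ref{310} followed by the Grassmannian embedding furnishes the ${\rm SL}(V)$-linearized invertible sheaf $\mathscr{L}_N$, and Proposition \ref{306} guarantees that the ambient quot-space is projective over $S$. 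I would then take the algebraic-space-theoretic closure $\overline{Q^{\mathcal{H}}}$ of the locus $Q^{\mathcal{H}}$ inside ${\rm Q}(F_{\mathcal{H}}(V \otimes \mathcal{G}),X,P)$; this is a quasi-compact, indeed projective, algebraic space over $S$ carrying an ${\rm SL}(V)$-action and the linearization $\mathscr{L}_N$.

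Next I would apply Theorem \ref{217} to $(\overline{Q^{\mathcal{H}}},{\rm SL}(V),\mathscr{L}_N)$, obtaining a uniform categorical quotient $\phi:\overline{Q^{\mathcal{H}}}^{ss}(\mathscr{L}_N) \to Y^{ss}(\mathscr{L}_N)$ and an open stable quotient. The key point is that Theorem \ref{320} identifies the GIT-semistable locus: a point $[V \otimes \mathcal{G} \to \mathcal{F}]$ is GIT-semistable exactly when $\mathcal{F}$ is $\mathcal{H}$-semistable, and semistability is not destroyed by passing to the closure, so $\overline{Q^{\mathcal{H}}}^{ss}(\mathscr{L}_N)=Q_{ss}^{\mathcal{H}}$. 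I would set $\mathcal{M}^{ss}(\mathcal{H},\mathcal{O}_X(1),P):=Y^{ss}(\mathscr{L}_N)$. To see this is a coarse moduli space for $S$-equivalence classes I would build the natural transformation from the moduli functor: a family of $\mathcal{H}$-semistable sheaves over $T$ becomes, after an \'etale cover of $T$ trivializing the pushforward $F_{\mathcal{H}}(\cdot)(N)$, a classifying morphism into $Q_{ss}^{\mathcal{H}}$ (using the universal family of Theorem \ref{307}); composing with the ${\rm SL}(V)$-invariant $\phi$ yields a morphism to $Y^{ss}$ independent of the trivialization, which therefore descends to $T$. Bijectivity on geometric points follows from the analysis of closed orbits: two semistable quotients lie over the same point iff their orbit closures meet in $Q_{ss}^{\mathcal{H}}$, which by the standard Jordan--H\"older argument means their associated graded sheaves agree, i.e. the sheaves are $S$-equivalent; universality is then immediate from the categorical quotient property of $\phi$.

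The main obstacle is the projectivity of $Y^{ss}(\mathscr{L}_N)$ over $S$. Over a field this is automatic, since Mumford's quotient of a projective variety is $\mathrm{Proj}$ of the finitely generated ring of invariant sections; over an algebraic space base I cannot invoke $\mathrm{Proj}$ directly. My plan is to argue \'etale-locally on $S$ and then glue. Over an \'etale chart $U \to S$ the parameter space becomes a genuine projective scheme and the classical theory produces a projective GIT quotient together with an ample descent of a power $\mathscr{L}_N^{\otimes \ell}$ (for $\ell \gg 0$, via Kempf's descent applied to the closed, hence polystable, orbits). The delicate part is checking that these local quotients and local ample bundles are compatible with the uniformity built into Theorem \ref{217}: because the categorical quotient there is uniform it commutes with the base changes along the \'etale cover, so the descended ample bundles glue to an ample invertible sheaf on $Y^{ss}(\mathscr{L}_N)$ and the local projectivity propagates to projectivity over $S$. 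Carrying out this gluing — ensuring the descent data match and that separatedness and finite type hold over the algebraic space $S$ — is where the real work lies.
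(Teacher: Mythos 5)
Your overall architecture coincides with the paper's: the paper also rigidifies via the boundedness results, embeds into the quot-space on the coarse space via Corollary \ref{310} and the Grassmannian, linearizes with $\mathscr{L}_N$, invokes Theorem \ref{217} for the quotient, Theorem \ref{320} for the identification of semistability, and Lemma \ref{321} for the $S$-equivalence statement; indeed the paper states Theorem \ref{322} with no separate proof beyond this assembly, defining $\mathcal{M}^{ss}(\mathcal{H},\mathcal{O}_X(1),P):=Q^{\mathcal{H}}_{ss}/{\rm SL}(V)$ directly as the quotient of the quasi-compact \emph{open} locus $Q^{\mathcal{H}}$, and never addresses projectivity. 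Your two deviations --- passing to the closure $\overline{Q^{\mathcal{H}}}$ inside the projective space ${\rm Q}(F_{\mathcal{H}}(V\otimes\mathcal{G}),X,P)$, and proving projectivity \'etale-locally over $S$ using classical GIT plus uniformity of the quotient and descent of relative ampleness --- are exactly the Huybrechts--Lehn/Nironi strategy \cite{HuLe,Nir}, and that part of your plan is sound and in fact supplies an argument the paper omits.

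The genuine gap is the sentence ``semistability is not destroyed by passing to the closure, so $\overline{Q^{\mathcal{H}}}^{ss}(\mathscr{L}_N)=Q_{ss}^{\mathcal{H}}$.'' That equality has two inclusions, and the one your phrase addresses (points of $Q^{\mathcal{H}}_{ss}$ remaining semistable in the closure) is not the problematic one. What must be proved is that \emph{no point of the boundary} $\overline{Q^{\mathcal{H}}}\setminus Q^{\mathcal{H}}$ is GIT-semistable; otherwise $Y^{ss}(\mathscr{L}_N)$ contains points corresponding to no $\mathcal{H}$-semistable sheaf, and both the coarse-moduli interpretation and your identification of the quotient collapse. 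Theorem \ref{320} cannot be quoted for this: it is stated only for points of $Q^{\mathcal{H}}$, where by definition $\alpha\colon V \to H^0(X/S,F_{\mathcal{H}}(\mathcal{F})(N))$ is already an isomorphism and $\theta_{\mathcal{H}}(\ker q)$ is already surjective. By Corollary \ref{310} the closure $\overline{Q^{\mathcal{H}}}$ lies inside ${\rm Q}^{\mathcal{H}}(V\otimes\mathcal{G},\mathcal{X},P)$, so boundary points are honest quotients $[V\otimes\mathcal{G}\to\mathcal{F}]$ on $\mathcal{X}$, but ones where $\alpha$ degenerates or $\mathcal{F}$ fails to be pure or semistable; you must show every such point is GIT-unstable. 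This is precisely the hard step in the classical construction (the analogue of \cite[Theorem 4.3.3]{HuLe} and of the corresponding step in \cite{Nir}): one uses a one-parameter subgroup scaling $\ker(\alpha)$ to force injectivity of $\alpha$, then purity and semistability of the limit sheaf (here Lemma \ref{318} and the arguments already in the paper's proof of Theorem \ref{320} are the natural template), and finally a dimension count to force $\alpha$ to be an isomorphism, hence membership in $Q^{\mathcal{H}}_{ss}$. Until Theorem \ref{320} is extended in this way to all points of $\overline{Q^{\mathcal{H}}}$, your equality, and with it the rest of the construction, does not go through.
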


\section{Definition}
\subsection{Algebraic Spaces}
Let $S$ be a base scheme. An \emph{algebraic space over $S$} is a functor $X: (\text{Sch}/S)^{\rm op} \rightarrow {\rm Set}$ such that
\begin{enumerate}
\item $X$ is a sheaf with respect to the big \'etale topology;
\item the diagonal map $\Delta: X \rightarrow X \times_{S}X$ is representable by schemes;
\item  there is a surjective \'etale morphism $u:U \rightarrow X$, where $U$ is an $S$-scheme.
\end{enumerate}
An \emph{atlas} or a \emph{local chart} of $X$ is a pair $(U,u)$, where $U$ is an $S$-scheme and $u: U \rightarrow X$ is a surjective \'etale morphism. Let $(U,u)$ and $(V,v)$ be two local charts of $X$. A \emph{morphism of local charts} $(U,u)$ and $(V,v)$ is a morphism $f_{uv}:(U,u) \rightarrow (V,v)$ of schemes such that the following diagram commutes
\begin{center}
\begin{tikzcd}
U \arrow[rd, "u"] \arrow[rr, "f_{uv}"] &  & V \arrow[ld,"v"] \\
& X &
\end{tikzcd}
\end{center}

An algebraic space $X$ has property $P$ if there exists a local chart $(U,u)$ such that $U$ is a scheme with property $P$. A morphism $f : X \rightarrow Y$ of algebraic spaces is of property $P$ if the morphism $f$ is representable by schemes and there exists a local chart $(V,v)$ of $Y$ such that the morphism $V \times_X Y \rightarrow V$ has property $P$. For example the property $P$ could be imbedding, injection, surjection, properness. An algebraic space $X$ over $S$ is \emph{separated} if the diagonal map $\Delta: X \rightarrow X \times_S  X$ is a closed embedding.

Let $U_0$ be a $S$-scheme, and $U_1 \hookrightarrow U_0 \times_S U_0$ is a monomorphism, which defines an equivalence relation, such that the two natural projections $s,t: U_1 \rightarrow U_0$ are \'etale, where $s$ is the first projection known as the source map and $j$ is the second projection known as the target map. It is well-known that $U_0/U_1$ is an algebraic space. In fact, any algebraic space can be written in the form of sheaf quotients. Here is the construction. Let $X$ be an algebraic space, and we take a local chart $(U,u)$ of $X$. By the representability of the diagonal map, the product $U \times_{X} U$ is an $S$-scheme, and the two natural projections $s,t: U \times_X U \rightarrow U$ are \'etale. We have $X \cong U/U \times_X U$ as sheaves (see \cite[\S 5.2]{Ol} for more details). Taking two local charts $(U,u)$ and $(V,v)$ of $X$, let $f_{uv}:(U,u) \rightarrow (V,v)$ be a morphism. The following diagram
\begin{center}
\begin{tikzcd}
U_1 \arrow[r, hook] \arrow[d]  & U_0 \times_S U_0 \arrow[d] \\
V_1 \arrow[r, hook] & V_0 \times_S V_0
\end{tikzcd}
\end{center}
is cartesian. With respect to this property, we prefer to consider an algebraic space as sheaf quotients in this paper, and use the following notations
\begin{align*}
U_0:=U, \quad U_1:=U \times_X U, \quad X=U_0/U_1.
\end{align*}

Let $V_0/V_1$ and $W_0/W_1$ be algebraic spaces. A morphism $f_0:V_0 \rightarrow W_0$ induces a morphism $V_0 \times_S V_0 \rightarrow W_0 \times_S W_0$, but this morphism may not be well-defined when it restricts to $V_1$ and $W_1$. Therefore, a morphism $f_0: V_0 \rightarrow W_0$ induces a morphism $f: V_0/V_1 \rightarrow W_0/W_1$ of algebraic spaces if there exists a morphism $f_1: V_1 \rightarrow W_1$ such that
the diagram
\begin{equation}\tag{2.1}\label{eq201}
\begin{tikzcd}
V_1 \arrow[r, hook] \arrow[d, "f_1"]  & V_0 \times_S V_0 \arrow[d,"f_0 \times_S f_0"] \\
W_1 \arrow[r, hook]& W_0 \times_S W_0
\end{tikzcd}
\end{equation}
commutes. Note that if such a morphism $f_1$ exists, it is uniquely determined by $f_0$. On the other hand, given a morphism $f: Y (\cong V_0 /V_1) \rightarrow Z (\cong W_0 /W_1)$ of algebraic spaces, it can be naturally associated with a pair $(f_0,f_1)$ of morphisms. We want to remind the reader that this pair of morphisms is not unique, but it is determined uniquely up to isomorphism with respect to the local chart $(U,u)$ we choose.

A \emph{geometric point} of $X$ is a monomorphism $x: \Spec(k) \rightarrow X$, where $k$ is a field. We also use the notation $x \in X$ for a geometric point of $X$. Denote by $|X|$ the set of all geometric points in $X$, and there is a topology on $|X|$ induced by $X$ \cite[\S 6.3.3]{Ol}. We define the dimension $\dim_x(X)$ of $X$ at $x$ to be the dimension $\dim_u(U)$, where $U$ is any scheme admitting an \'etale surjection $U \rightarrow X$ and $u \in U$ is any point lying over $x$. We set
\begin{align*}
\dim(X):= \text{sup}_{x \in |X|} \dim_x(X).
\end{align*}

\subsection{$G$-action on Algebraic Spaces}
Let $G$ be a group scheme over $S$. To define a $G$-action on $X$, we first define a $G$-action on a local chart $(U,u)$ of $X$, and a $G$-action on $X$ is defined on local charts of $X$, which are compatible to each other. Here are the details of this construction.

Let $(U,u)$ be a local chart of $\mathcal{X}$. Let $\sigma_u: G \times_S U \rightarrow U$ be a $G$-action on the scheme $U$. Recall that the $G$-action $\sigma_u$ satisfies the following conditions
\begin{itemize}
    \item The diagram
\begin{center}
\begin{tikzcd}
G \times_S G \times_S U \arrow[r, "1_G \times \sigma_u"] \arrow[d, "\mu \times 1_U"]  & G \times_S U \arrow[d,"\sigma_u"] \\
G \times_S U \arrow[r, "\sigma_u"]& X
\end{tikzcd}
\end{center}
commutes, where $\mu : G \times_S G \rightarrow G$ is the multiplication.
\item The composition
\begin{align*}
U \cong S \times_S U \xrightarrow{e \times 1_U} G \times_S U \xrightarrow{\sigma_u} X,
\end{align*}
where $e: S \rightarrow G$ is the identity morphism for $G$.
\end{itemize}
The $G$-action $\sigma_u$ on $U$ induces a $G$-action $\sigma_{u \times_S u}$ on $U \times_S U$,
\begin{align*}
\sigma_{u \times_S u}: G \times_S (U \times_S U) \rightarrow (G \times_S U) \times_S (G \times_S U) \xrightarrow{\sigma_u \times \sigma_u} U \times_S U,
\end{align*}
where the first map is $(g,u_1,u_2) \rightarrow (g,u_1) \times (g,u_2)$. Note that the $G$-action $\sigma_{u \times_S u}$ may not induce a well-defined $G$-action on $U \times_X U$, and the problem is
\begin{align*}
\sigma_{u \times_S u}: G \times_S (U \times_X U) \nsubseteq U \times_X U.
\end{align*}
Therefore, if we want to define a $G$-action on $X = U_0/U_1$, the $G$-action on $U$ should also be well-defined on $U \times_X U$.

\begin{defn}\label{201}
A $G$-action on $(U,u)$ is a $G$-action $\sigma_u$ on $U$ such that the induced morphism $\sigma_{u \times_S u}: G \times_S (U \times_S U) \rightarrow U \times_S U$ is well-defined on $U \times_X U$, i.e.
\begin{align*}
\sigma_{u \times_S u}: G \times_S (U \times_X U) \subseteq U \times_X U.
\end{align*}
\end{defn}
We would like to use the notation $(U,u,\sigma_u)$ for a $G$-action on $(U,u)$, and if there is no ambiguity, we use the notation $\sigma_U$ for the group action $\sigma_u$ on $(U,u)$.

Let $(U,u)$ and $(V,v)$ be two local charts of $X$, and let $f_{uv}: (U,u) \rightarrow (V,v)$ be a morphism. Given $\sigma_u$ and $\sigma_v$ two group actions on local charts, we say that $\sigma_u$ and $\sigma_v$ are \emph{compatible}, if the following diagram commutes
\begin{center}
\begin{tikzcd}
G \times_S U \arrow[r, "1_G \times f_{uv}"] \arrow[d, "\sigma_u"]  & G \times_S V \arrow[d,"\sigma_v"] \\
U \arrow[r, "f_{uv}"]& V
\end{tikzcd}
\end{center}

\begin{defn}\label{202}
A $G$-action $\sigma$ on $X$ is given by the data and conditions
\begin{enumerate}
\item for each local chart $(U,u)$ of $X$, we have a $G$-action $(U,u,\sigma_u)$,
\item the $G$-actions on local charts are compatible.
\end{enumerate}
\end{defn}
We use the notation $\sigma: G \times_S X \rightarrow X$ for a $G$-action on $X$. Denote by  $\Phi$ the morphism
\begin{align*}
(\sigma,p_X): G \times_S X \rightarrow X \times_S X,
\end{align*}
where $p_X$ is the projection to the second factor $X$.

Let $x: \Spec(k) \rightarrow X$ be a geometric point. The \emph{orbit} of $x$ with respect to the group action $\sigma: G \times_S X \rightarrow X$ is the image of the following map
\begin{align*}
G \times_S \Spec(k) \xrightarrow{1_G \times x} G \times_S X \xrightarrow{\sigma} X.
\end{align*}
Denote by $\text{orb}(x)$ the orbit of $x$.

The \emph{stabilizer} of $x$ is the fiber product $\text{stab}(x)$ of the following diagram
\begin{center}
\begin{tikzcd}
\text{stab}(x) \arrow[r] \arrow[d]  & \Spec(k) \arrow[d,"x"] \\
G \times_S \Spec(k) \arrow[r, "(1_G \times x) \circ\sigma"]& X
\end{tikzcd}
\end{center}
For a geometric point $x \in X$, we have
\begin{align*}
\dim \text{orb}(x)+\dim \text{stab}(x)=\dim G.
\end{align*}
Let $U$ be a local chart of $X$. Denote by $u$ the point in $U$ lying over $x$. Since $U \rightarrow X$ is an \'etale covering, we have
\begin{align*}
\dim \text{orb}(x)= \dim \text{orb}(u), \quad \dim \text{stab}(x)= \dim \text{orb}(u).
\end{align*}
If the dimension of $\text{stab}(u)$ is constant in a neighborhood of $U$, we say $x$ is \emph{regular}. Denote by $X^{reg}$ the set of regular points in $X$. Denote by $S_r(X,\sigma)$ the set of points $x$ such that $\dim \text{stab}(x) \geq r$.

\begin{defn}\label{203}
A group action $\sigma: G \times_S X \rightarrow X$ is
\begin{enumerate}
    \item \emph{closed}, if for any geometric point $x \in X$, the orbit $\text{orb}(x)$ is closed,
    \item \emph{separated}, if the image of $\Phi: G \times_S X \rightarrow X \times_S X$ is closed and the image of $\Phi$ is $X \times_Y X$,
    \item \emph{proper}, if $\Phi$ is proper,
\end{enumerate}
\end{defn}

The definition of \emph{separatedness} is different from that of schemes (see \cite[Definition 0.8]{MuFoKir}).

\subsection{Categorical Quotient}
Let $X$ be an algebraic space over $S$ with a $G$-action $\sigma: G \times_S X \rightarrow X$. Denote by $(X,\sigma)$ an algebraic space with a $G$-action $\sigma$.
\begin{defn}\label{204}
A categorical quotient of $(X,\sigma)$ is a pair $(Y,\phi)$, where $Y$ is an algebraic space over $S$ and $\phi: X \rightarrow Y$ is a morphism, such that
\begin{enumerate}
    \item The diagram
    \begin{equation}\tag{2.2}\label{eq202}
    \begin{tikzcd}
        G \times_S X \arrow[r, "\sigma"] \arrow[d, "p_X"]  & X \arrow[d,"\phi"] \\
        X \arrow[r, "\phi"]& Y
    \end{tikzcd}
    \end{equation}
    commutes.
    \item Let $(Z,\psi)$ be a pair, where $Z$ is an algebraic space over $S$ and $\psi:X \rightarrow Z$ is a morphism, satisfying $\psi \circ \sigma=\psi \circ p_X$.
        \begin{equation}\tag{2.3}\label{eq203}
        \begin{tikzcd}
            & G \times_S X \arrow[r, "\sigma"] \arrow[d, "p_X"] & X \arrow[d, "\phi"] \arrow[ddr, bend left, "\psi"] & \\
            & X \arrow[r, "\phi"] \arrow[drr, bend right, "\psi"] & Y \arrow[dr, dotted, "\varphi" description] & \\
            & & & Z
        \end{tikzcd}
        \end{equation}
        Then, there is a unique morphism $\varphi: Y \rightarrow Z$ such that $\psi=\varphi \circ \phi$.
\end{enumerate}
\end{defn}
This definition of categorical quotients is given in the category of algebraic spaces, while the classical one is in the category of schemes (see \cite[Definition 0.5]{MuFoKir}).

Let $Y' \rightarrow Y$ be a morphism of algebraic spaces. Define $X'=X \times_Y Y'$ the fiber product. There is a natural $G$-action $\sigma'$ on $X'$ induced by $\sigma: G \times_S X \rightarrow X$, and denote by $\phi' : X' \rightarrow Y'$ the induced morphism.

\begin{defn}\label{205}
Let $\sigma$ be a $G$-action of $G$ on an algebraic space $X$. A pair $(Y,\phi)$ is called a \emph{universal} (resp. \emph{uniform}) categorical quotient of $(X,\sigma)$ if for all morphisms (resp. flat morphisms) $Y' \rightarrow Y$, the pair $(Y',\phi')$ is a categorical quotient of $X'$ with respect to the $G$-action $\sigma'$.
\end{defn}

Let $\sigma$ be a $G$-action on $X$. Given a local chart $(U,u)$, we have a $G$-action on $U$. Suppose that there exists a categorical quotient $\phi: U \rightarrow V$ under the action of the group $G$. We have the following map
\begin{align*}
U \times_X U \hookrightarrow U \times_S U \xrightarrow{\phi \times_S \phi} V \times_S V.
\end{align*}
Denote by $V_1$ the image of the above composition maps. Then, the map
\begin{align*}
(s,t): V_1 \hookrightarrow V_0 \times V_0
\end{align*}
is naturally induced by $U \times_X U \xrightarrow{(s,t)} U \times_S U$. Note that the morphisms $s,t: V_1 \rightarrow V_0$ are not necessarily to be \'etale.

If we want to construct the categorical quotient of $X=U_0/U_1$, we have to make the following assumptions:
\begin{itemize}
\item[$(\ast)$] there exists a categorical quotient (as schemes) of $\phi_0 : U_0 \rightarrow V_0$ such that
\begin{enumerate}
\item $\phi_0$ is surjective;
\item the induced morphism $V_1 \rightarrow V_0 \times_S V_0$ is injective;
\item the morphisms $(s,t): V_1 \rightarrow V_0$ are \'etale.
\end{enumerate}
\end{itemize}

\begin{thm}\label{206}
Let $X$ be an algebraic space with a $G$-action $\sigma: G \times_S X \rightarrow X$. Let $(U,u)$ be a local chart of $X$, and denote by $\sigma_u$ the induced action on $U$. If there exists a categorical quotient $\phi: U \rightarrow V$ with respect to $\sigma_u$ satisfying the condition $(\ast)$. Then, there exists a (universal) categorical quotient of $(X,\sigma)$.
\end{thm}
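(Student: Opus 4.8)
The plan is to take $Y := V_0/V_1$ as the candidate quotient and to let $\phi : X \to Y$ be the morphism induced by $\phi_0 : U_0 \to V_0$. First I would verify that $Y$ is an algebraic space: conditions $(\ast)$(2) and $(\ast)$(3) say precisely that $V_1 \hookrightarrow V_0 \times_S V_0$ is a monomorphism whose two projections $(s,t) : V_1 \to V_0$ are \'etale, while the equivalence-relation structure (reflexivity, symmetry, transitivity) on $V_1$ is inherited from that of $U_1 = U_0 \times_X U_0$ through the surjection $\phi_0 \times_S \phi_0$; hence $V_0/V_1$ is an algebraic space by the construction recalled in \S 2.1. (Reflexivity and symmetry are immediate from surjectivity of $\phi_0$; transitivity is where the injectivity in $(\ast)$(2) is used.) To produce $\phi$, I would invoke the criterion of diagram \eqref{eq201}: the pair $(\phi_0,\phi_1)$, where $\phi_1 : U_1 \to V_1$ is the tautological map onto the image $V_1$, makes the relevant square commute and therefore descends to $\phi : X = U_0/U_1 \to V_0/V_1 = Y$.

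Next I would check the commuting square \eqref{eq202}, i.e. $\phi \circ \sigma = \phi \circ p_X$. Since $\phi_0$ is a categorical quotient of $\sigma_u$, the identity $\phi_0 \circ \sigma_u = \phi_0 \circ p_{U_0}$ holds on the chart $G \times_S U_0$. As $G \times_S U_0 \to G \times_S X$ is \'etale surjective, and both $\phi \circ \sigma$ and $\phi \circ p_X$ pull back along it to $v \circ \phi_0 \circ \sigma_u = v \circ \phi_0 \circ p_{U_0}$ (with $v : V_0 \to Y$ the structure chart), the two morphisms out of $G \times_S X$ coincide.

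The core is the universal property. Using $X = U_0/U_1$, a morphism $\psi : X \to Z$ into an algebraic space $Z$ is the same datum as a morphism $\psi_0 := \psi \circ u : U_0 \to Z$ with $\psi_0 \circ s = \psi_0 \circ t$ on $U_1$, and the hypothesis $\psi \circ \sigma = \psi \circ p_X$ translates into $G$-invariance of $\psi_0$. I want to descend $\psi_0$ to $\chi_0 : V_0 \to Z$ with $\chi_0 \circ \phi_0 = \psi_0$ and $\chi_0 \circ s_V = \chi_0 \circ t_V$, where $s_V,t_V : V_1 \to V_0$; such a $\chi_0$ then defines the sought $\varphi : Y \to Z$ with $\psi = \varphi \circ \phi$. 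Uniqueness is the routine half, forced by the surjectivity of $\phi$ (it is surjective because $\phi_0$ and $v$ are). The genuine obstacle is the \emph{existence} of the factorization $\psi_0 = \chi_0 \circ \phi_0$: our hypothesis only gives $\phi_0$ as a categorical quotient in the category of schemes, whereas the target $Z$ is merely an algebraic space. To bridge this gap I would choose an \'etale presentation $r : Z_0 \to Z$ and form $W := U_0 \times_Z Z_0$; because $\psi_0$ is $G$-invariant, $G$ acts on $W$ (trivially on the $Z_0$-factor) so that $\pi : W \to U_0$ is $G$-equivariant and \'etale surjective while $\rho : W \to Z_0$ is $G$-invariant into a \emph{scheme}. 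Applying the scheme-level categorical-quotient property to $\rho$ and gluing the resulting maps over the \'etale cover $W \to U_0$ --- here conditions $(\ast)$(2) and $(\ast)$(3) are what make the descent datum effective on the $V_0$-side --- produces $\chi_0$. The remaining compatibility $\chi_0 \circ s_V = \chi_0 \circ t_V$ then follows from $\psi_0 \circ s = \psi_0 \circ t$ together with the surjectivity of $\phi_1 : U_1 \to V_1$.

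Finally, for the universal (resp. uniform) assertion I would rerun the same three steps after an arbitrary (resp. flat) base change $Y' \to Y$: writing $X' = X \times_Y Y'$ with the induced action $\sigma'$ and $\phi' : X' \to Y'$, the conditions $(\ast)$ are preserved provided $\phi_0$ is a universal categorical quotient of $\sigma_u$, so that $(Y',\phi')$ is again a categorical quotient of $(X',\sigma')$. I expect the delicate point to be the existence of the factorization in the previous paragraph, namely promoting the scheme-theoretic categorical quotient hypothesis to a universal property tested against algebraic-space targets, since it is exactly there that the \'etale-local nature of $Z$ must be reconciled with the gluing conditions $(\ast)$.
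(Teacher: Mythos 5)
Your construction phase coincides with the paper's: you take $Y=V_0/V_1$, descend $\phi_0$ to $\phi:X\to Y$ via the pair $(\phi_0,\phi_1)$, and verify $\phi\circ\sigma=\phi\circ p_X$ on the chart $G\times_S U_0$. The divergence, and the problem, is in the existence half of the universal property. The paper handles an algebraic-space target $Z=W_0/W_1$ by invoking its \S 2.1 convention that $\psi:X\to Z$ is given by a pair $(\psi_0,\psi_1)$ with $\psi_0:U_0\to W_0$ a morphism of \emph{schemes}; the scheme-level categorical quotient property then applies verbatim to $\psi_0$, producing $\varphi_0:V_0\to W_0$, and $\varphi_1:V_1\to W_1$ is defined by $\varphi_1(\phi_1(u_1)):=\psi_1(u_1)$, well-defined because $W_1\hookrightarrow W_0\times_S W_0$ is a monomorphism and every point of $V_1$ lifts to $U_1$; uniqueness comes from the uniqueness clause of the scheme-level quotient together with $\varphi_1$ being determined by $\varphi_0$. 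You instead (quite reasonably) decline to assume that $\psi\circ u:U_0\to Z$ lifts to a chart of $Z$, and you try to bridge the resulting gap by forming $W:=U_0\times_Z Z_0$.

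That bridge fails at its load-bearing step. Your $\rho:W\to Z_0$ is indeed a $G$-invariant morphism to a scheme, but it is a morphism out of $W$, not out of $U_0$: the hypothesis of the theorem provides a categorical quotient only for $U_0$, and nothing gives one for the \'etale cover $W\to U_0$. Categorical quotients do not pull back along \'etale surjections, and since $W\to U_0$ is not of the form $U_0\times_{V_0}V'\to U_0$, even assuming $\phi_0$ were a \emph{universal} categorical quotient would not produce a quotient of $W$. So ``applying the scheme-level categorical-quotient property to $\rho$'' is not licensed by anything. The subsequent gluing step is equally unsupported: the cover $W\to U_0$ induces no \'etale cover of $V_0$ over which to glue (the composite $W\to U_0\to V_0$ is not \'etale), and conditions $(\ast)(2)$--$(3)$ concern the monomorphism $V_1\to V_0\times_S V_0$, not any descent datum attached to $W$. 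Thus the existence of $\chi_0:V_0\to Z$ --- which you yourself flag as the delicate point --- is exactly what remains unproved. To make your chart-free setup work you would have to do the extra work the paper's convention sweeps under the rug: lift $\psi$ to a pair of scheme morphisms after refining the chart, apply the quotient property there, and prove the factorization is independent of the refinement. (A smaller issue: your uniqueness claim ``forced by surjectivity of $\phi$'' is loose, since a surjective morphism of algebraic spaces need not be an epimorphism; the paper derives uniqueness from uniqueness of $\varphi_0$ instead.)
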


\begin{proof}
Set $U_0:=U$ and $U_1=U \times_X U$. We have $X \cong U_0/U_1$ as sheaves. By assumptions, denote by $(V_0,\phi_0)$ the categorical quotient of $U_0$ with respect to the $G$-action $\sigma_u$. We have the following natural map
\begin{align*}
U_1 \hookrightarrow U_0 \times_S U_0 \xrightarrow{\phi_0 \times_S \phi_0} V_0 \times_S V_0,
\end{align*}
and denote by $V_1$ the image of $U_1$ in $V_0 \times_S V_0$. Clearly, the following diagram
    \begin{center}
    \begin{tikzcd}
        U_1 \arrow[r, hook] \arrow[d]  & U_0 \times_S U_0 \arrow[d,"\phi_0 \times_S \phi_0"] \\
        V_1 \arrow[r, hook] & V_0 \times_S V_0
    \end{tikzcd}
    \end{center}
commutes. By the condition $(\ast)$, we know that the induced maps $s,t: V_1 \rightarrow V_0$ are \'etale and $V_1 \rightarrow V_0 \times_S V_0$ is an inclusion. The above discussion gives us an algebraic space $Y=V_0/V_1$ and a morphism $X \cong U_0 / U_1 \rightarrow Y=V_0 / V_1$. We will prove that the algebraic space $Y$ is a categorical quotient of $X$ with respect to the $G$-action $\sigma$. By the construction above, the first condition in Definition \ref{204} is satisfied, and we only have to show that $Y=V_0/V_1$ satisfies the second condition.

Let $(Z,\psi)$ be a pair, where $Z=W_0/W_1$ is an algebraic space and $\psi: X \rightarrow Z$ is a morphism, such that the diagram
\begin{equation}\tag{2.4}\label{eq204}
\begin{tikzcd}
    G \times_S X \arrow[r,"\sigma"] \arrow[d, "p_X"]  & X  \arrow[d,"\psi"] \\
    X \arrow[r,"\psi"] & Z
\end{tikzcd}
\end{equation}
commutes. The morphism $\psi: X \rightarrow Z$ can be considered as a pair $(\psi_0,\psi_1)$, where $\psi_0: U_0 \rightarrow W_0$ and $\psi_1: U_1 \rightarrow W_1$, such that
    \begin{equation*}
    \begin{tikzcd}
        U_1 \arrow[r, hook] \arrow[d,"\psi_1"]  & U_0 \times_S U_0 \arrow[d,"\psi_0 \times_S \psi_0"] \\
        W_1 \arrow[r, hook] & W_0 \times_S W_0
    \end{tikzcd}
    \end{equation*}
By \eqref{eq204}, we have
\begin{center}
\begin{tikzcd}
    G \times_S U_0 \arrow[r] \arrow[d]  & U_0  \arrow[d,"\psi_0"] \\
    U_0 \arrow[r,"\psi_0"] & W_0
\end{tikzcd}
\end{center}
We know that $V_0$ is the categorical quotient of $U_0$ with respect to the $G$-action. Therefore, there exists a unique morphism $\varphi_0:V_0 \rightarrow W_0$ making the following diagram
        \begin{center}
        \begin{tikzcd}
            & G \times_S U_0 \arrow[r] \arrow[d] & U_0 \arrow[d, "\phi_0"] \arrow[ddr, bend left, "\psi_0"] & \\
            & U_0 \arrow[r, "\phi_0"] \arrow[drr, bend right, "\psi_0"] & V_0 \arrow[dr, dotted, "\varphi_0" description] & \\
            & & & W_0
        \end{tikzcd}
        \end{center}
commutative.

We want to show that there exists a morphism $\varphi: Y \rightarrow Z$ satisfying the second condition in the definition of categorical quotients. The key step is to construct a morphism $\varphi_1: V_1 \rightarrow W_1$. Consider the diagram
    \begin{center}
    \begin{tikzcd}
            U_1 \arrow[r, "\phi_1"] \arrow[drr, bend right, "\psi_1"] & V_1 \arrow[dr, dotted, "\varphi_1" description] & \\
            & & W_1
    \end{tikzcd}
    \end{center}
and we define the morphism $\varphi_1$ in the following way
\begin{align*}
\varphi_1(v_1): =\psi_1(u_1), \text{ where } v_1=\phi_1(u_1).
\end{align*}
We have to check that this map $\varphi_1$ is well-defined. More precisely, let $u_{11}, u_{12} \in U_1$ be two elements such that $\phi_1(u_{11})=\phi_1(u_{12})=v_1$, and we will prove that $\psi_1(u_{11})=\psi_1(u_{12})$. We have the following diagram
        \begin{equation}\tag{2.5}\label{eq205}
        \begin{tikzcd}
            U_1 \arrow[r, "\phi_1"] \arrow[rr, bend left, "\psi_1"] \arrow[d, hook] & V_1 \arrow[r, dotted, "\varphi_1"] \arrow[d, hook] & W_1 \arrow[d, hook] \\
            U_0 \times_S U_0 \arrow[r, "\phi_0 \times \phi_0"] \arrow[rr, bend right, "\psi_0 \times \psi_0"] & V_0 \times_S V_0 \arrow[r, "\varphi_0 \times \varphi_0"]  & W_0 \times_S W_0
        \end{tikzcd}
        \end{equation}
The commutativity of the above diagram tells us that $\psi_1(u_{11})=\psi_1(u_{12})$. Therefore, the morphism $\varphi_1$ is well-defined, and it is easy to check that \eqref{eq205} is commutative with respect to the morphism $\varphi_1$. This finishes the proof of the existence of the morphism $\varphi: Y=V_0/V_1 \rightarrow Z=W_0/W_1$.

The above construction also implies that the morphism $\varphi_1$ is uniquely determined by $\varphi_0$. Therefore, the morphism $\varphi: Y \rightarrow Z$ is unique.

The case of universal categorical quotient can be proved similarly.
\end{proof}

\begin{rem}\label{207}
Let $(U,u)$ be an atlas of an algebraic space $X$. This lemma tells us that it is enough to work on the scheme $U$, and if a categorical quotient exists for $U$ under condition $(\ast)$, then it will give us a categorical quotient for the algebraic space $X$.

In the proof, we use the language of groupoid to prove this property of algebraic spaces. Indeed, we can define the categorical quotient of groupoids (in the category groupoids) similarly, and use the same approach to prove the existence of the categorical quotient of groupoids.
\end{rem}

\begin{lem}\label{208}
Let $\sigma$ be a $G$-action on an algebraic space $X$. Suppose that $(Y,\phi)$ is a categorical quotient of $X$ with respect to $\sigma$. If $G \times_S X \cong X \times_Y X$, then the map $\phi$ is representable by schemes.
\end{lem}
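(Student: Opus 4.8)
The plan is to verify representability by schemes through its defining property: for every scheme $T$ and morphism $T \to Y$, the fibre product $X \times_Y T$ (a priori only an algebraic space) must be shown to be a scheme. The whole argument will exploit the hypothesis $G \times_S X \cong X \times_Y X$, which I read as the statement that the morphism $\Phi = (\sigma, p_X)$ — which factors through $X \times_Y X$ precisely because the square \eqref{eq202} commutes — is an isomorphism onto $X \times_Y X$. Under this identification the first projection $\mathrm{pr}_1 \colon X \times_Y X \to X$ becomes $\sigma$ and the second projection $\mathrm{pr}_2 \colon X \times_Y X \to X$ becomes $p_X \colon G \times_S X \to X$. The key observation is that $\mathrm{pr}_2$ is the base change of $\phi$ along $\phi$ itself, and that under the isomorphism it is identified with $p_X$, which is the base change of the structure morphism $G \to S$ along $X \to S$. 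Since $G$ is a scheme, $p_X$ is representable by schemes; thus $\phi$ becomes representable by schemes after base change along $\phi$.

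First I would record the geometric content of the hypothesis: the isomorphism $G \times_S X \cong X \times_Y X$ says exactly that $\phi \colon X \to Y$ is a principal $G$-bundle for the free action $\sigma$, and in particular that $\phi$ is a faithfully flat, finitely presented epimorphism. Concretely, because $p_X$ and $\sigma$ are obtained from $G \to S$ by base change they inherit its flatness and surjectivity, so both projections $\mathrm{pr}_1, \mathrm{pr}_2 \colon X \times_Y X \to X$ are faithfully flat; as $Y$ is the quotient of $X$ by this flat groupoid, $\phi$ is itself faithfully flat and surjective. This is the step I would write out with care, since it is where the hypothesis is converted into a usable covering property of $\phi$.

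Now fix a scheme $T$ with a morphism $T \to Y$ and set $W := X \times_Y T$. Base-changing the torsor relation gives
\begin{align*}
W \times_T W \cong (X \times_Y X) \times_Y T \cong (G \times_S X) \times_Y T \cong G \times_S W,
\end{align*}
so $W \to T$ is again a $G$-torsor and its second projection $W \times_T W \to W$ is identified with the projection $G \times_S W \to W$. When $G \to S$ is affine — which is the case in all the applications in this paper, e.g. $G = \mathrm{SL}(V)$ — this projection is an affine morphism. Since $W \to T$ is faithfully flat and finitely presented, I would invoke fppf descent of affineness along the covering $W \to T$ itself: the base change of $W \to T$ along $W \to T$ is the affine morphism $G \times_S W \to W$, hence $W \to T$ is affine. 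As $T$ is a scheme, $W = X \times_Y T$ is affine over a scheme and therefore a scheme, which is exactly what is required.

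The main obstacle is the descent step, and it must be handled with care: being a scheme does not descend along arbitrary morphisms, so one cannot simply say ``$X \times_Y X$ is representable by schemes, hence so is $\phi$.'' What makes the argument go through is the combination of two facts that both originate from $G$ being an affine group scheme: the relevant base change $G \times_S W \to W$ is not merely representable but \emph{affine}, and $\phi$ (hence $W \to T$) is a faithfully flat finitely presented cover along which we may descend \emph{in the category of algebraic spaces}, without needing the cover itself to be a scheme. Affineness of morphisms being local for the fppf topology on the base is what legitimately transfers the property from the base change back to $\phi$. For a general, non-affine group scheme $G$ one would instead descend the weaker property ``representable by schemes'' directly, using that $p_X$ is representable by schemes and that $\phi$ is an fppf epimorphism; I would flag that this variant requires the corresponding fppf-descent statement for representability by schemes, which is where the only genuine subtlety of the proof lies.
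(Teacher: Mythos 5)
Your proof takes a genuinely different route from the paper's. The paper fixes a scheme $U \to Y$ and argues purely by fibre-product manipulations that $X \times_Y U \cong G \times_S U$, which is a scheme on the nose; no flatness, no torsor language, and no descent appear anywhere. (Note that the paper's claimed isomorphism is strictly stronger than what you establish: it asserts that the pulled-back ``torsor'' is trivial, whereas your argument only extracts schemeness of $X \times_Y T$, which is all the lemma needs.) You instead read the hypothesis as the torsor condition, base-change it to $W := X \times_Y T$ to get $W \times_T W \cong G \times_S W$, and descend affineness of $G \times_S W \to W$ along the fppf cover $W \to T$. Where it applies, your argument is cleaner and more robust than the paper's manipulation, but it consumes two hypotheses the lemma does not state: that $G \to S$ is flat and locally of finite presentation (so that $\phi$ can be an fppf cover at all), and that $G \to S$ is affine (for the descent step).

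The genuine gap is exactly the step you flagged: faithful flatness of $\phi$. As written, the justification is circular. The lemma only assumes $(Y,\phi)$ is a categorical quotient in the sense of Definition \ref{204}, i.e.\ a coequalizer of $\sigma$ and $p_X$ in the category of algebraic spaces; such a $Y$ carries no a priori flatness, submersiveness, or sheaf-theoretic quotient property, and the clause ``as $Y$ is the quotient of $X$ by this flat groupoid'' is precisely the assertion that the categorical quotient agrees with the fppf quotient --- which is what has to be proved. The repair is as follows: assuming $G \to S$ flat and locally of finite presentation, the groupoid $R := G \times_S X \rightrightarrows X$ (via $\sigma$ and $p_X$), embedded in $X \times_S X$ by $\Phi$, is an fppf equivalence relation; by Artin's theorem the quotient sheaf $X/R$ is an algebraic space with $X \to X/R$ faithfully flat and $X \times_{X/R} X \cong R$; since algebraic spaces form a full subcategory of fppf sheaves, $X/R$ is also a coequalizer in the category of algebraic spaces, hence a categorical quotient, and by uniqueness of categorical quotients $Y \cong X/R$. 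Only after this identification is $\phi$ faithfully flat, and then the rest of your argument goes through. Finally, your fallback for non-affine $G$ --- descending ``representable by schemes'' along the fppf epimorphism $\phi$ --- is not an available tool: schemeness is not fppf-local on the base (this failure is exactly why algebraic spaces exist as a separate notion), and for non-affine $G$, e.g.\ an abelian scheme, there exist torsors that are algebraic spaces but not schemes, so no argument along these lines can succeed without an affineness-type hypothesis on $G$.
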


\begin{proof}
Let $u:U \rightarrow Y$ be a morphism, where $U$ is a scheme over $S$. We will prove that $X \times_Y U \cong G \times_S U$, which is a scheme. We have the following diagram
    \begin{center}
    \begin{tikzcd}
            G \times_S (X \times_Y U) \arrow[r] \arrow[d] & G \times_S X  \arrow[r] \arrow[d] & X \arrow[d,"\phi"] \\
            X \times_Y U \arrow[r] & X \arrow[r,"\phi"]  & Y  \\
            & X \times_Y U \arrow[u] \arrow[r] & U \arrow[u,"u"]
    \end{tikzcd}
    \end{center}
Note that $G \times_S (X \times_Y U) \cong (G \times_S U) \times_Y X$. Therefore,
    \begin{center}
    \begin{tikzcd}
            (G \times_S U) \times_Y X \arrow[r] \arrow[d] & X \arrow[d] \\
            U \times_Y X \arrow[r]   & Y
    \end{tikzcd}
    \end{center}
and we have
    \begin{center}
    \begin{tikzcd}
            (G \times_S U)  \arrow[r] \arrow[d] & X \arrow[d] \\
            U \arrow[r]   & Y
    \end{tikzcd}
    \end{center}
The above diagram implies that $G \times_S U \cong X \times_Y U$. Therefore, $X \times_Y U$ is a scheme.
\end{proof}

\begin{lem}\label{209}
Let $(Y,\phi)$ be a universal categorical quotient of $X$. Then, the morphism $\phi$ is surjective.
\end{lem}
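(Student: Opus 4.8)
The plan is to argue by contradiction, exploiting the feature that distinguishes a \emph{universal} categorical quotient (Definition~\ref{205}) from a merely uniform one: its formation commutes with \emph{arbitrary} base change $Y' \to Y$, not only flat base change. The only base change I would actually use is along a single geometric point of $Y$, which reduces the surjectivity of $\phi$ to identifying the categorical quotient of the empty algebraic space. The hypothesis ``universal'' is essential here, since a point inclusion $\Spec(k) \to Y$ is in general not flat and so would not be permitted under a mere uniformity assumption.

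First I would suppose $\phi$ is not surjective, so that there is a geometric point $y: \Spec(k) \to Y$ whose class in $|Y|$ lies outside the image of $|\phi|: |X| \to |Y|$. Since $Y$ is an algebraic space over $S$, the composite $\Spec(k) \to Y \to S$ exhibits $\Spec(k)$ as an $S$-scheme, so I may apply Definition~\ref{205} with $Y' = \Spec(k)$ and the morphism $y$. Writing $X' = X \times_Y \Spec(k)$ and letting $\phi': X' \to \Spec(k)$ be the induced map, universality then tells me that $(\Spec(k),\phi')$ is a categorical quotient of $(X',\sigma')$ for the induced $G$-action $\sigma'$.

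Next I would observe that $X' = \emptyset$. Indeed, if $X'$ had a geometric point, then its image under the projection $X' \to X$ would be a point of $X$ whose image under $\phi$ is $y$, contradicting the choice of $y$; hence the fibre over $y$ is empty. The final ingredient is that the empty algebraic space is its own categorical quotient: any pair $(Z,\psi)$ with $\psi: \emptyset \to Z$ factors uniquely through $\mathrm{id}: \emptyset \to \emptyset$ because $\emptyset$ is the initial object, so $(\emptyset,\mathrm{id}_\emptyset)$ satisfies Definition~\ref{204}. Since categorical quotients are unique up to unique isomorphism (a formal consequence of the universal property), the two categorical quotients $(\Spec(k),\phi')$ and $(\emptyset,\mathrm{id}_\emptyset)$ of $(X',\sigma')$ must coincide, forcing $\Spec(k) \cong \emptyset$, which is absurd. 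This contradiction shows that $\phi$ is surjective.

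The step I expect to require the most care is the identification $X' = \emptyset$, that is, matching ``$y$ is not in the image of $|\phi|$'' with ``the fibre $X \times_Y \Spec(k)$ is empty'' in the category of algebraic spaces, where both the topological space $|{-}|$ and the fibre product should be controlled through a local chart rather than treated naively. I would also state explicitly the uniqueness of categorical quotients up to canonical isomorphism, since the whole contradiction rests on it; beyond these two points, everything is a direct application of Definitions~\ref{204} and~\ref{205}.
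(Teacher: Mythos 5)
Your proposal is correct and follows essentially the same route as the paper: both base change along a geometric point $y: \Spec(k) \rightarrow Y$ (which is exactly where universality, rather than uniformity, is needed) and conclude that the fibre $X \times_Y \Spec(k)$ cannot be empty because $\Spec(k)$ must be its categorical quotient. The only difference is presentational — the paper states directly that ``$y$ is a categorical quotient of $X_y$, which means $X_y$ cannot be empty,'' while you run the argument by contradiction and supply the detail the paper leaves implicit, namely that $(\emptyset,\mathrm{id}_\emptyset)$ is the categorical quotient of the empty space, so $\Spec(k)$ cannot be.
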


\begin{proof}
Let $y=\Spec(k) \in Y$ be a point. We have the following cartesian diagram,
    \begin{center}
    \begin{tikzcd}
            X_y  \arrow[r] \arrow[d] & X \arrow[d] \\
            y \arrow[r]   & Y
    \end{tikzcd}
    \end{center}
and $y$ is a categorical quotient of $X_y$, which means that $X_y$ cannot be an empty set. Therefore, the morphism $\phi$ is surjective.
\end{proof}

\subsection{Geometric Quotient}

\begin{defn}\label{210}
Let $\sigma: G \times X \rightarrow X$ be a $G$-action on $X$. A pair $(Y,\phi)$, where $Y$ is an algebraic space and $\phi:  X \rightarrow Y$ is a morphism, is a \emph{geometric quotient} of $(X,\sigma)$ if it satisfies the following conditions
\begin{enumerate}
\item Diagram \eqref{eq202} commutes, i.e. $\phi \circ \sigma=\phi \circ p_X$.
\item The image of $\Phi: G \times_S X \rightarrow X \times_S X$ is $X \times_Y X$.
\item The morphism $\phi$ is surjective.
\item The morphism $\phi$ is submersive.
\item The structure sheaf $\mathcal{O}_Y$ is isomorphic to $(\phi_*(\mathcal{O}_X))^G$.
\end{enumerate}
A \emph{good quotient} of $(X,\sigma)$ is a geometric quotient $(Y,\phi)$ such that $Y$ is separated.
\end{defn}
Note that the properties of surjectivity and submersivity are defined for morphisms representable by schemes, which is implied by the second condition (see Lemma \ref{208}).

\begin{prop}\label{211}
A geometric quotient $(Y, \phi)$ of $(X,\sigma)$ is also a categorical quotient.
\end{prop}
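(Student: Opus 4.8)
The plan is to follow the classical argument that a geometric quotient is a categorical quotient (compare \cite[Proposition 0.1]{MuFoKir}), building the factorizing morphism $\varphi$ in three stages—on geometric points, on the topology, and on the structure sheaf—and then verifying uniqueness. Throughout, let $(Z,\psi)$ be a pair with $Z$ an algebraic space and $\psi: X \to Z$ satisfying $\psi \circ \sigma = \psi \circ p_X$; the goal is to produce a unique $\varphi: Y \to Z$ with $\psi = \varphi \circ \phi$. The five conditions of Definition \ref{210} are exactly tailored to this task: condition (3) gives surjectivity of $\phi$, condition (2) identifies the fibers of $\phi$ with the $G$-orbits, condition (4) says $|Y|$ carries the quotient topology, and condition (5) controls the functions on $Y$ as the $G$-invariant functions on $X$.

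First I would construct $\varphi$ as a map of geometric points. The relation $\psi \circ \sigma = \psi \circ p_X$ forces $|\psi|$ to be constant on each orbit $\text{orb}(x)$. By condition (2) the image of $\Phi$ is $X \times_Y X$, so two geometric points of $X$ have the same image under $\phi$ precisely when they lie in a common orbit; hence $|\psi|$ factors set-theoretically through $|\phi|$, and by the surjectivity of condition (3) the resulting map $\varphi: |Y| \to |Z|$ is uniquely determined by $|\psi| = \varphi \circ |\phi|$. To see that $\varphi$ is continuous, take $V \subseteq |Z|$ open. Then $|\phi|^{-1}(\varphi^{-1}(V)) = |\psi|^{-1}(V)$ is open because $\psi$ is a morphism, and since $\phi$ is submersive (condition (4)) this forces $\varphi^{-1}(V)$ to be open in $|Y|$.

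Next I would promote $\varphi$ to a map of structure sheaves. For an open $V \subseteq Z$ the pullback $\psi^{\#}$ lands in the $G$-invariants, since $\psi$ is $G$-invariant, so it factors through $\mathcal{O}_X(\psi^{-1}(V))^G$. Writing $W = \varphi^{-1}(V)$ we have $\psi^{-1}(V) = \phi^{-1}(W)$, and condition (5), namely $\mathcal{O}_Y \cong (\phi_* \mathcal{O}_X)^G$, identifies $\mathcal{O}_X(\phi^{-1}(W))^G$ with $\mathcal{O}_Y(W)$. Composing yields $\varphi^{\#}: \mathcal{O}_Z(V) \to \mathcal{O}_Y(\varphi^{-1}(V))$, which is compatible with restriction and with $\phi$, $\psi$. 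Uniqueness is then immediate: any competing factorization agrees with $\varphi$ on points by surjectivity of $\phi$, and its sheaf map is forced to be the one factoring through the invariants by condition (5), so it coincides with $\varphi$.

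The step I expect to be the main obstacle is upgrading this construction from a morphism of topologically ringed objects to a genuine morphism of algebraic spaces, since an algebraic space is an \'etale sheaf rather than a locally ringed space and the naive point-plus-sheaf recipe is not automatically a morphism in the relevant category. To handle this I would argue through local charts in the spirit of Theorem \ref{206} and Remark \ref{207}: present $X = U_0/U_1$, $Y = V_0/V_1$, and $Z = W_0/W_1$, check that the invariant-function condition (5) and the submersivity condition (4) are inherited along the \'etale chart $U_0 \to X$ so that the scheme-level statement furnishes $\varphi_0: V_0 \to W_0$, and then descend to the quotient groupoids by constructing the compatible $\varphi_1: V_1 \to W_1$ exactly as in the proof of Theorem \ref{206}. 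What legitimizes these chart-level arguments is the representability of $\phi$, which condition (2) supplies through Lemma \ref{208} (as already noted after Definition \ref{210}); this is the technical crux where the algebraic-space setting differs from the classical scheme case.
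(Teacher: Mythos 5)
Your proposal is correct and ultimately takes the same route as the paper: reduce to local charts, observe that the chart-level pair $(V_0,\phi_0)$ is a geometric (hence, by \cite[Proposition 0.1]{MuFoKir}, categorical) quotient of $(U_0,\sigma_u)$ as schemes, and then pass back to algebraic spaces via the groupoid construction of Theorem \ref{206}. Your first two paragraphs simply unpack the classical point/topology/structure-sheaf argument that the paper leaves inside the citation to \cite[Proposition 0.1]{MuFoKir}, and your third paragraph is precisely the paper's proof.
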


\begin{proof}
We only have to prove that the geometric quotient $(Y,\phi)$ satisfies the universal property. Let $(Z,\psi)$ be a pair, where $Z$ is an algebraic space and $\psi: X \rightarrow Z$ is a morphism such that $\psi \circ \sigma=\psi \circ p_X$. We use the same notation as in the proof of Theorem \ref{206} that $X=U_0 / U_1$, $Y=V_0 /V_1$ and $Z=W_0 /W_1$, where $U_i$, $V_i$, $W_i$ are all schemes. By assumption that $(Y,\psi)$ is a geometric quotient of $(X,\sigma)$, the pair $(V_0,\psi_0)$ is also a geometric quotient of $(U_0,\sigma)$. By \cite[Proposition 0.1]{MuFoKir}, a geometric quotient is also a categorical quotient as schemes. Therefore, $(V_0,\psi_0)$ is a categorical quotient of $(U_0,\sigma_u)$. By Theorem \ref{206}, $(Y,\phi)$ is also a categorical quotient of $(X,\sigma)$ as algebraic spaces.
\end{proof}

\begin{lem}\label{212}
Let $X$ be an algebraic space with a $G$-action $\sigma: G \times_S X \rightarrow X$. If a geometric quotient $(Y,\phi)$ exists, then the action $\sigma$ is closed.
\end{lem}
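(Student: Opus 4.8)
\emph{Proof proposal.}
The plan is to reduce the closedness of orbits to the statement that the fibers of $\phi$ are closed, using the defining properties of a geometric quotient. By condition (2) of Definition \ref{210}, the image of $\Phi=(\sigma,p_X):G\times_S X\rightarrow X\times_S X$ is precisely $X\times_Y X$; read on geometric points, this says that $x_1$ and $x_2$ lie in a common orbit exactly when $\phi(x_1)=\phi(x_2)$. Hence for every geometric point $x$ one obtains the identification $\text{orb}(x)=\phi^{-1}(\phi(x))$, so that each orbit is a fiber of $\phi$. Since $\phi$ is submersive, and in particular continuous, it then suffices to show that $\phi(x)$ is a closed point of $|Y|$: for then $\text{orb}(x)=\phi^{-1}(\phi(x))$ is closed in $|X|$, and the action is closed in the sense of Definition \ref{203}(1).

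To prove that the image of a (closed) geometric point under $\phi$ is again closed, I would pass to a local chart exactly as in the proof of Proposition \ref{211}. Writing $X\cong U_0/U_1$ and $Y\cong V_0/V_1$, the induced pair $(V_0,\phi_0)$ is a geometric quotient of $(U_0,\sigma_u)$ in the category of schemes, and the same $\Phi$-condition yields the chart-level identification of orbits with fibers of $\phi_0$. Here the classical theory of geometric quotients of schemes applies: the underlying schemes are of finite type over the algebraically closed field $k$, hence Jacobson, and $\phi_0$ is of finite type, so $\phi_0$ carries closed points to closed points. Consequently the image under $\phi_0$ of a closed point $\tilde{x}$ lying over $x$ is closed in $V_0$, and its fiber, i.e.\ the corresponding orbit in $U_0$, is closed.

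It remains to transfer this closedness from the chart to $X$. The charts $u:U_0\rightarrow X$ and $v:V_0\rightarrow Y$ are surjective \'etale, hence open, submersive, and compatible with the group actions by Definitions \ref{201} and \ref{202}; moreover the preimage of a point under an \'etale morphism is discrete, and closedness is reflected and preserved through such maps. Using $\phi\circ u=v\circ\phi_0$, the set $u^{-1}(\text{orb}(x))=(\,v\circ\phi_0)^{-1}(\phi(x))=\phi_0^{-1}(v^{-1}(\phi(x)))$ is a finite union of fibers of $\phi_0$ over closed points of $V_0$ (the fiber $v^{-1}(\phi(x))$ consisting of closed points since it contains the closed point $\phi_0(\tilde{x})$), each of which has just been shown to be closed; therefore $u^{-1}(\text{orb}(x))$ is closed in $U_0$, and submersiveness of $u$ forces $\text{orb}(x)$ to be closed in $|X|$. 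I expect the genuine obstacle to lie precisely in this last, purely topological, transfer: one must check carefully that the chart-level identification ``orbit $=$ fiber'' is compatible with the descent along the \'etale maps $u$ and $v$, and that closedness of points and of fibers is correctly reflected and preserved through them. The algebro-geometric input—that finite-type morphisms of Jacobson schemes send closed points to closed points—is standard once the reduction to $(V_0,\phi_0)$ is in place.
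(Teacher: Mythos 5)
Your proposal is correct and takes essentially the same approach as the paper: the paper's entire proof is the identification $\text{orb}(x)=\phi^{-1}(\phi(x))$ coming from condition (2) of Definition \ref{210}, followed by the one-line claim that this fiber is closed (which the paper attributes to submersiveness of $\phi$). Your additional chart-level argument that $\phi(x)$ is a closed point merely fills in what the paper asserts without proof; note only that it invokes finite-type-over-$k$ hypotheses not stated in the lemma, and that the clause ``$v^{-1}(\phi(x))$ consists of closed points since it contains the closed point $\phi_0(\tilde{x})$'' is not a valid inference as written (the conclusion does hold in that setting because every point of this \'etale fiber is a $k$-point of a finite-type $k$-scheme, hence closed).
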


\begin{proof}
Let $x: \Spec(k) \rightarrow X$ be a geometric point of $X$. Since $\phi:X \rightarrow Y$ is a submersive map, $\text{orb}(x)=\phi^{-1}(\phi(x))$ is a closed set. Therefore, the action $\sigma$ is closed.
\end{proof}

\begin{lem}\label{213}
Let $X$ be an algebraic space with a $G$-action $\sigma: G \times_S X \rightarrow X$. Suppose that a geometric quotient $(Y,\phi)$ of $(X,\sigma)$ exists. Then $Y$ is separated if and only if $\sigma$ is separated.
\end{lem}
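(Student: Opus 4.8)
The plan is to express both directions of the equivalence through the closedness of a single subspace, the orbit relation $X\times_Y X\subseteq X\times_S X$, and then to move between this and the diagonal of $Y$ by one Cartesian square. First I would record that, since a geometric quotient is given, Definition \ref{210}(2) already guarantees that the image of $\Phi\colon G\times_S X\to X\times_S X$ equals $X\times_Y X$. Comparing with Definition \ref{203}(2), the clause ``$\text{im}(\Phi)=X\times_Y X$'' is therefore automatic, and all that remains of separatedness of $\sigma$ is that $\text{im}(\Phi)=X\times_Y X$ be closed in $X\times_S X$. Thus the lemma reduces to: $Y$ is separated if and only if $X\times_Y X$ is closed in $X\times_S X$.

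The key formal input is the Cartesian diagram
\begin{equation*}
\begin{tikzcd}
X\times_Y X \arrow[r] \arrow[d] & X\times_S X \arrow[d, "\phi\times_S\phi"] \\
Y \arrow[r, "\Delta_Y"] & Y\times_S Y,
\end{tikzcd}
\end{equation*}
which identifies $X\times_Y X$ with the pullback $(X\times_S X)\times_{Y\times_S Y}Y$ along the diagonal $\Delta_Y$; on underlying spaces this reads $X\times_Y X=(\phi\times_S\phi)^{-1}(\Delta_Y(Y))$. For the direction ``$Y$ separated $\Rightarrow\sigma$ separated,'' if $Y$ is separated then $\Delta_Y$ is a closed embedding, hence its base change, the top horizontal arrow $X\times_Y X\to X\times_S X$, is again a closed immersion; so $\text{im}(\Phi)=X\times_Y X$ is closed and $\sigma$ is separated.

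For the converse I would use that $\Delta_Y$ is always a representable immersion of an algebraic space, so that $Y$ is separated as soon as the locally closed image $\Delta_Y(Y)$ is closed in $Y\times_S Y$. Since $\phi$ is surjective (Definition \ref{210}(3)), so is $\phi\times_S\phi$; granting that $\phi\times_S\phi$ is moreover submersive, the identity $X\times_Y X=(\phi\times_S\phi)^{-1}(\Delta_Y(Y))$ together with the closedness of $X\times_Y X$ forces $\Delta_Y(Y)$ to be closed, whence $Y$ is separated.

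The main obstacle is exactly the submersivity of the product map $\phi\times_S\phi$: Definition \ref{210}(4) only provides that $\phi$ itself is submersive, and submersivity need not survive an arbitrary product. I would resolve this by reducing to local charts in the style of Proposition \ref{211}: writing $X=U_0/U_1$ and $Y=V_0/V_1$ with $(V_0,\phi_0)$ the geometric quotient of $(U_0,\sigma_u)$, the \'etale surjections $U_0\times_S U_0\to X\times_S X$ and $V_0\times_S V_0\to Y\times_S Y$ reduce every closedness assertion to the schemes $U_0\times_S U_0$ and $V_0\times_S V_0$. There one invokes that the scheme-level quotient $\phi_0$ is universally submersive, so that $\phi_0\times_S\phi_0=(1\times_S\phi_0)\circ(\phi_0\times_S 1)$ is a composite of base changes of submersive maps and hence submersive; the classical comparison for schemes \cite{MuFoKir} then settles the scheme case, and \'etale descent through the charts returns the statement for $X$ and $Y$. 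Everything else being formal, the single nontrivial point to pin down is this universal submersivity of $\phi_0$.
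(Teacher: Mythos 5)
Your proof is correct and follows essentially the same route as the paper: the paper's argument consists exactly of your first two paragraphs---it uses Definition \ref{210}(2) to identify ${\rm im}(\Phi)$ with $X \times_Y X$, forms the same Cartesian square over $\Delta_Y$, and concludes in one line that ``since $\phi$ is submersive, $\Delta$ is a closed immersion if and only if $X \times_Y X$ is closed in $X \times_S X$.'' The submersivity-of-$\phi \times_S \phi$ issue that occupies your final paragraph is a genuine subtlety that the paper simply glosses over (it silently descends closedness along $\phi \times_S \phi$ while only citing submersivity of $\phi$); your chart-level patch is a sensible way to address it, though, as you yourself note, it still rests on the unverified claim that the scheme-level quotient $\phi_0$ is universally, not merely plainly, submersive.
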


\begin{proof}
Since $(Y,\phi)$ is a geometric quotient, the image of $\Psi:G \times_S X \rightarrow X \times_S X$ is $X \times_Y X$. On the other hand, we have the following Cartesian square
    \begin{center}
    \begin{tikzcd}
            X \times_Y X  \arrow[r] \arrow[d] & X \times_S X \arrow[d,"\phi \times \phi"] \\
            Y \arrow[r,"\Delta"]   & Y \times_S Y
    \end{tikzcd}
    \end{center}
where $\Delta: Y \rightarrow Y \times_S Y$ is the diagonal morphism. Since $\phi$ is submersive, $\Delta$ is a closed immersion if and only if $X \times_Y X$ is closed in $X \times_S X$. Therefore, the geometric quotient $Y$ is separated if and only if $\sigma$ is separated.
\end{proof}

\begin{prop}\label{214}
A universal categorical quotient $(Y,\phi)$ of $(X,\sigma)$ is a good quotient if and only if $\sigma$ is separated.
\end{prop}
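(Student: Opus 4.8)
The plan is to prove the two implications separately, using Lemma \ref{213} to control the separatedness of $Y$ and reducing the remaining geometric-quotient conditions to a scheme-theoretic statement on a chart. For the forward direction, suppose $(Y,\phi)$ is a good quotient. By Definition \ref{210} this means $(Y,\phi)$ is a geometric quotient with $Y$ separated. Since a geometric quotient of $(X,\sigma)$ then exists, I would invoke Lemma \ref{213}, which gives that $Y$ is separated if and only if $\sigma$ is separated; as $Y$ is separated, $\sigma$ is separated.

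For the converse, suppose $\sigma$ is separated. I would first reduce the target: by Lemma \ref{213}, once $(Y,\phi)$ is shown to be a geometric quotient, separatedness of $\sigma$ automatically forces $Y$ to be separated, whence $(Y,\phi)$ is a good quotient. So it suffices to verify conditions (1)--(5) of Definition \ref{210}. Conditions (1)--(3) are essentially free: (1) is the categorical-quotient square \eqref{eq202}; (2), that the image of $\Phi$ equals $X \times_Y X$, is precisely part of the definition of a separated action (Definition \ref{203}); and (3), surjectivity of $\phi$, is Lemma \ref{209}, applicable because $(Y,\phi)$ is universal. The work lies entirely in (4) submersivity of $\phi$ and (5) the identity $\mathcal{O}_Y \cong (\phi_*\mathcal{O}_X)^G$.

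To obtain (4) and (5) I would descend to a local chart, writing $X = U_0/U_1$ and $Y = V_0/V_1$ as in Theorem \ref{206}, with $\phi_0: U_0 \to V_0$ the categorical quotient of the chart under $\sigma_u$ and $V_0 \to Y$ étale surjective. Pulling the universal property of $(Y,\phi)$ back along $V_0 \to Y$ should exhibit $(V_0,\phi_0)$ as a universal categorical quotient of $(U_0,\sigma_u)$ in the category of schemes, and the separatedness of $\sigma$ should translate, through the cartesian square defining $V_1 \hookrightarrow V_0 \times_S V_0$, into separatedness of $\sigma_u$. The scheme-theoretic analogue of this proposition (\cite{MuFoKir}) then makes $(V_0,\phi_0)$ a geometric quotient, yielding submersivity of $\phi_0$ and $\mathcal{O}_{V_0} \cong (\phi_{0*}\mathcal{O}_{U_0})^G$. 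Since both properties are étale-local on the target and the chart maps intertwine $\phi_0$ with $\phi$, conditions (4) and (5) descend to $\phi$; thus $(Y,\phi)$ is a geometric quotient, and Lemma \ref{213} then makes $Y$ separated, closing the argument.

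The hard part will be the descent step: checking that $(V_0,\phi_0)$ is genuinely a \emph{universal} categorical quotient of schemes by tracing the universal property of $Y = V_0/V_1$ through the groupoid presentation, that separatedness of the $G$-action descends from $X$ to the chart $U_0$ compatibly with the identification $\mathrm{im}(\Phi) = X \times_Y X$, and that submersivity together with the formation of $G$-invariant functions commutes with the étale base change $V_0 \to Y$. The subtlety flagged after Definition \ref{210}, that surjectivity and submersivity are only defined for morphisms representable by schemes, must also be addressed, presumably via the representability furnished by condition (2) (Lemma \ref{208}).
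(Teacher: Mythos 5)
Your forward direction and your handling of conditions (1)--(3) of Definition \ref{210} in the converse coincide with the paper's proof: a good quotient is a geometric quotient with $Y$ separated, so Lemma \ref{213} gives separatedness of $\sigma$; conversely, (1) is the categorical-quotient square, (2) is literally part of the definition of a separated action (Definition \ref{203}), and (3) is surjectivity from the universal property (Lemma \ref{209}). The genuine gap is in your treatment of conditions (4) and (5). You reduce to a chart and then invoke ``the scheme-theoretic analogue of this proposition'' from \cite{MuFoKir}, but no such statement exists there: \cite{MuFoKir} never proves that a universal categorical quotient of a scheme by a separated action is a geometric (or good) quotient. The closest result, Amplification 1.3 of \cite{MuFoKir}, concerns a reductive group acting on an affine scheme and characterizes when the affine quotient is geometric by \emph{closedness} of the action, not separatedness; it has neither the hypotheses you need nor the conclusion. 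So the core of your converse --- submersivity of $\phi$ and $\mathcal{O}_Y \cong (\phi_*(\mathcal{O}_X))^G$ --- rests on a phantom citation. The paper does not take this route at all: it stays on $X$ itself, asserting that the remaining conditions of Definition \ref{210} hold automatically for a universal categorical quotient, and deducing submersivity, separatedness of $Y$, and condition (2) directly from separatedness of $\sigma$ (via its Lemmas \ref{212} and \ref{213}), with no passage to charts.

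A secondary but concrete problem is the representability step you defer to Lemma \ref{208}. To form the scheme chart $U_0 = X \times_Y V_0$ and speak of $\phi_0 : U_0 \rightarrow V_0$ as a quotient of schemes, you need $\phi$ to be representable by schemes, and Lemma \ref{208} requires that $\Phi$ induce an isomorphism $G \times_S X \cong X \times_Y X$. Separatedness of $\sigma$ only says that the \emph{image} of $\Phi$ equals $X \times_Y X$; whenever points have nontrivial stabilizers, $\Phi$ fails to be injective, so the hypothesis of Lemma \ref{208} is strictly stronger than what you have. Your descent argument therefore cannot even begin without an additional step closing this gap.
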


\begin{proof}
If $(Y,\phi)$ is a good quotient, then $\phi$ is separated by Lemma \ref{213}.

On the other hand, if $(Y,\phi)$ is a universal categorical quotient, it satisfies conditions $(1)$ and $(4)$ automatically, and the morphism $\phi$ is surjective which is implied by the universal property. The separatedness of the action $\sigma$ implies
\begin{enumerate}
\item the map $\phi$ is submersive by Lemma \ref{212},
\item $Y$ is separated,
\item the image of $\Phi$ is $X \times_Y X$.
\end{enumerate}
Therefore, the $(Y,\phi)$ is also a geometric quotient.
\end{proof}

\begin{lem}\label{215}
Let $X$ be an algebraic space projective over $S$, and the space $X$ has a $\mathbb{G}_m$-action $\sigma$. The action $\sigma$ is proper if and only if $\sigma$ is separated and $S_1(X,\sigma)=\emptyset$.
\end{lem}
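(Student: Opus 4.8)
The plan is to work directly with the morphism $\Phi=(\sigma,p_X):\mathbb{G}_m\times_S X\to X\times_S X$ and to exploit that properness decomposes as separated $+$ finite type $+$ universally closed. First I would record two preliminaries. On the one hand, $\Phi$ is automatically separated and of finite type: $\mathbb{G}_m\times_S X$ is separated and of finite type over $S$ (since $\mathbb{G}_m$ and $X$ are), so by the cancellation property applied to $\mathbb{G}_m\times_S X\to X\times_S X\to S$ the map $\Phi$ is separated and of finite type. Hence $\sigma$ is proper if and only if $\Phi$ is universally closed, and uniqueness in the valuative criterion comes for free. On the other hand, the fibre of $\Phi$ over a geometric point $(x',x)$ is empty or a torsor under $\mathrm{stab}(x)\subseteq\mathbb{G}_m$; since the closed subgroup schemes of $\mathbb{G}_m$ are exactly $\mathbb{G}_m,\mu_n,\{1\}$, such a fibre is proper precisely when $\mathrm{stab}(x)$ is finite, i.e. when $x\notin S_1(X,\sigma)$.

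For the forward implication, if $\sigma$ is proper then $\Phi$ is closed, so its image is closed and $\sigma$ is separated. Moreover the fibres of $\Phi$ are then proper, so by the fibre computation $\mathrm{stab}(x)$ is finite for every $x$, that is $S_1(X,\sigma)=\emptyset$.

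For the converse I would assume $\sigma$ separated and $S_1(X,\sigma)=\emptyset$. The fibre computation now shows $\Phi$ has finite fibres, so $\Phi$ is separated, of finite type and quasi-finite, and I would verify universal closedness via the valuative criterion. Given a DVR $R$ with fraction field $K$, sections $b_1,b_2\in X(R)$ and $g_K\in\mathbb{G}_m(K)$ with $g_K\cdot b_2|_K=b_1|_K$, the second coordinate of any lift is forced to be $b_2$ by separatedness of $X/S$, so a lift exists if and only if $g_K$ extends to $\mathbb{G}_m(R)=R^{\times}$, i.e. if and only if $v(g_K)=0$. Suppose instead $v(g_K)=n>0$. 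Extending the orbit map $A(\lambda)=\lambda\cdot b_2$ along the curve $\{\lambda=g_K\}\subseteq\mathbb{A}^1_R$ and using properness of $X/S$ to take the limit identifies the special value $b_1(0)$ with $\lim_{t\to0}(t^{n}u)\cdot b_2(t)$, where $g_K=t^{n}u$. Separatedness of $\sigma$ forces $(b_1(0),b_2(0))$ into the closed image of $\Phi$, so $b_1(0)=g_0\cdot b_2(0)$ for some $g_0$; comparing this with the limit above, I would identify $\lim_{\lambda\to0}\lambda\cdot b_2(0)$ with $b_2(0)$ itself, so that $b_2(0)$ is a $\mathbb{G}_m$-fixed point and hence lies in $S_1(X,\sigma)$, a contradiction. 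Therefore $v(g_K)=0$, the valuative criterion holds, and $\sigma$ is proper.

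The hard part is the \emph{limit identification} in the converse: making rigorous that the special-fibre value $b_1(0)$, obtained as a two-variable limit along $\{\lambda=g_K\}$ in $\mathbb{A}^1_R$, agrees with the honest orbit limit $\lim_{\lambda\to0}\lambda\cdot b_2(0)$ computed inside the special fibre. This is precisely where separatedness of $\sigma$ (closedness of the image of $\Phi$, which governs how orbits degenerate) must be combined with properness of $X\to S$ (which guarantees that the orbit map $\mathbb{G}_m\to X$ extends over $\lambda=0$). A clean way to organize it is to pass to an \'etale chart $(U,u)$ with the induced action $\sigma_u$, reducing properness of $\Phi$, closedness of the image, and finiteness of stabilizers all to the scheme $U$, and then to apply Zariski's main theorem (a separated quasi-finite morphism of finite type factors as an open immersion followed by a finite morphism) together with closedness of the image to upgrade $\Phi$ from quasi-finite to finite, hence proper.
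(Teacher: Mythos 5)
Your route is genuinely different from the paper's: the paper checks properness and separatedness of $\Phi$ on an \'etale chart $(U,u)$ and then simply cites the scheme-level statement \cite[Lemma 0.5]{MuFoKir} for the induced action $\sigma_u$, whereas you attempt to reprove that scheme-level statement from scratch via the valuative criterion. Your preliminaries (cancellation gives $\Phi$ separated and of finite type; a nonempty fibre of $\Phi$ is a $\mathrm{stab}(x)$-torsor, and the closed subgroup schemes of $\mathbb{G}_m$ over an algebraically closed field are $\{1\}$, $\mu_n$, $\mathbb{G}_m$) and the whole forward implication are correct. But the converse has a genuine gap, sitting exactly where you flag it: the identification of $b_1(0)=\lim_{t\to 0}g_K(t)\cdot b_2(t)$, a limit along the arc $\lambda=g_K(t)$ through the indeterminacy point $(\lambda,t)=(0,0)$, with the fibrewise orbit limit $\lim_{\lambda\to 0}\lambda\cdot b_2(0)$. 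This interchange is not a deferrable technicality; it is false as a formal consequence of continuity and properness of $X$. For the action $\lambda\cdot[x:y:z]=[\lambda x:\lambda^{-1}y:z]$ on $\mathbb{P}^2$, with $b_2(t)=[1:t:1]$ and $g_K=t$, one gets $b_1(t)=[t:1:1]$, so $b_1(0)=[0:1:1]$ (not even a fixed point), while $\lim_{\lambda\to 0}\lambda\cdot b_2(0)=[0:0:1]$. So the identification can only come from a genuine use of separatedness of $\sigma$ inside this step; your sketch uses separatedness only to write $b_1(0)=g_0\cdot b_2(0)$, which by itself does not bridge the gap. What is missing is precisely the content of the lemma the paper quotes, so nothing has been proved.

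The fallback you propose does not repair this. Zariski's main theorem factors $\Phi$ as an open immersion $j$ into a space finite over the target, but closedness of the \emph{image} of $\Phi$ does not force the image of $j$ to be closed: a separated, quasi-finite, finite-type morphism with closed image need not be finite. Concretely, let $\widetilde{C}\to C$ be the normalization of a nodal curve and delete one of the two points over the node; the resulting map $W\to C$ is quasi-finite, separated and surjective (hence has closed image), yet it is not proper. Since your ZMT argument invokes only these abstract properties of $\Phi$, it would prove this false general statement; a correct proof must exploit the group action, which is what \cite[Lemma 0.5]{MuFoKir} does. Finally, note that under the paper's standing hypothesis that $X$ is projective over $S$ there is a complete two-line argument that your own computation brushes against: for any geometric point $x$ of a nonempty fibre, $\lim_{\lambda\to 0}\lambda\cdot x$ exists by properness of $X$ and is a $\mathbb{G}_m$-fixed point, so $S_1(X,\sigma)\neq\emptyset$, and moreover the fibre of $\Phi$ over the corresponding diagonal point is $\mathbb{G}_m$, which is not proper; hence both sides of the equivalence fail unless $X=\emptyset$, and the lemma holds vacuously. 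Either make that observation explicit, or follow the paper and reduce to a chart and cite the scheme case; as written, your proof is incomplete at its central step.
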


\begin{proof}
By definition of properness and separatedness of algebraic spaces, it is equivalent to work on a local chart $(U,u)$ of $X$. Since $X$ is a projective algebraic space, we also assume that $U$ is a projective space. Denote by $\sigma_u$ the induced group action on $U$. By \cite[Lemma 0.5]{MuFoKir}, we know that $\sigma_u$ is proper if and only if $\sigma_u$ is separated and $S_1(U,\sigma_u)$ is empty. This implies the lemma.
\end{proof}

\subsection{Geometric Invariant Theory on Algebraic Spaces}
Let $X$ be an algebraic space. A \emph{coherent} (resp. \emph{quasi-coherent}) \emph{sheaf} $F$ on $X$ is defined on local charts of $X$. More precisely, a coherent (resp. quasi-coherent) sheaf $F$ on $X$ is defined in the following way. On each local chart $(U,u)$ of $X$, let $F_u$ be a coherent (resp. quasi-coherent) sheaf on $U$. Let $\alpha_{uv}: F_u \rightarrow f_{uv}^* F_v$ be an isomorphism of coherent (resp. quasi-coherent) sheaves. The coherent (resp. quasi-coherent) sheaf $F$ is defined by the data $(F_u,\alpha_{uv})$. Sometimes we use the notation $F=(F_u,\alpha_{uv})$ to work on the coherent (resp. quasi-coherent) sheaf $F$ locally. Indeed, fixing a local chart $(U,u)$ of $X$, consider $s,t: U \times_X U \rightarrow U$. A coherent sheaf $F$ on $X$ is equivalent to a pair $(F_u,\gamma)$, where $F_u$ is a coherent sheaf on $U$ and $\gamma: s^*F_u \rightarrow t^* F_u$ is an isomorphism. Given a local chart $(U,u)$ and a coherent sheaf $F$ over $X$, if there is no ambiguity, we prefer to use the following notations
\begin{align*}
F(U)=F_U:=F_u
\end{align*}
for the coherent sheaf over $U$. A sheaf $F$ is \emph{locally free} if for each local chart $(U,u)$, the sheaf $F_u$ is locally free. A sheaf $F$ is \emph{invertible} if for each local chart $(U,u)$, the sheaf $F_u$ is invertible.

Let $r \in H^0(X,F)$ be a section of $F$. Denote by $r_u$ the restriction of $r$ to a local chart $(U,u)$ of $X$. Denote by $U_r$ the subscheme of $U$ such that $x \in U_r$ if $r(x) \neq 0$. Since $s^* r_u = t^* r_u$, we have $s^* U_r=t^* U_r \subseteq U \times_S U$. Note that the natural projections $s,t: s^* U_r \rightarrow U_r$ are \'etale. Therefore, we can define the algebraic space $X_r:= U_r/s^* U_r$ with respect to a given section $r \in H^0(X,F)$. It is easy to check that this definition is independent of the choice of local charts.

Let $\sigma: G \times_S X \rightarrow X$ be a $G$-action on $X$. A coherent sheaf $F$ on $X$ is \emph{$G$-linearized} if there is a morphism $\tau:\sigma^* F \rightarrow p_X^{*} F$ such that $\tau$ is an isomorphism and the following diagram commutes
\begin{center}
\begin{tikzcd}
\lbrack \sigma \circ (1_G \times \sigma) \rbrack ^* F \arrow[r, "(1_G \times \sigma)^* \tau"] \arrow[dd,equal]  & \lbrack p_X \circ (1_G \times \sigma)\rbrack ^* F \arrow[d]&  \\
& \lbrack \sigma \circ p_{23} \rbrack ^* F \arrow[r, "p_{23}^* \tau"] & \lbrack p_X \circ p_{23} \rbrack ^* F \arrow[d,equal] \\
\lbrack \sigma \circ (\mu \times 1_X) \rbrack ^* F \arrow[rr, "(\mu \times 1_X)^* \tau"] & & \lbrack p_X \circ (\mu \times 1_X) \rbrack ^* F
\end{tikzcd}
\end{center}
where $p_{23}: G \times_S G \times_S X \rightarrow G \times_S X$ is the projection omitting the first factor.

Let $r \in H^0(X,F)$ be a section of a coherent sheaf $F$ with a $G$-linearization $\tau$. We say that $r$ is \emph{$G$-invariant} if $\tau(\sigma^*(r))=p_X^*(r)$. If $r$ is a $G$-invariant section, the $G$-action $\sigma$ on $X$ induces a well-defined $G$-action on $X_r$.

\begin{defn}\label{216}
Let $L$ be an invertible sheaf on $X$ with a $G$-linearization $\tau$. Let $x$ be a geometric point on $X$.
\begin{enumerate}
\item $x$ is \emph{semistable} if there exists a $G$-invariant section $r \in H^0(X,L^n)$ for some $n$, such that $r(x) \neq 0$ and there exists a uniform categorical quotient of $X_r$.
\item $x$ is \emph{stable} if there exists a $G$-invariant section $r \in H^0(X,L^n)$ for some $n$, such that $r(x)\neq 0$, the action of $G$ on $X_r$ is separated and there exists a uniform categorical quotient of $X_r$.
\end{enumerate}
\end{defn}
Denote by $X^{ss}(L)$ (resp. $X^{s}(L)$) the set of semistable points (stable points) in $X$. The set $X^s(L)$ can be written as the union of disjoint sets $X^s_i(L)$, where $i$ means that the dimension of the stabilizer is $i$.

\begin{thm}\label{217}
Let $X$ be a quasi-compact algebraic space over $S$ with a group action $\sigma: G \times_S X \rightarrow X$. Given a $G$-linearized invertible sheaf $L$ on $X$, there exists a uniform categorical quotient $(Y^{ss}(L),\phi)$ of $X^{ss}(L)$. Furthermore, there is an open subset $Y^{s}(L) \subseteq Y^{ss}(L)$ such that $X^{s}(L)=\phi^{-1}(Y^{s}(L))$ and $Y^s(L)$ is a uniform good quotient of $X^s(L)$.
\end{thm}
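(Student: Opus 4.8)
The plan is to build $Y^{ss}(L)$ by gluing the local quotients supplied by the definition of semistability, using that the uniform property makes categorical quotients descend to, and restrict along, open immersions. By Definition \ref{216}, every semistable geometric point $x$ lies in some $X_r$, with $r \in H^0(X, L^{n})$ a $G$-invariant section and $(Y_r, \phi_r)$ a uniform categorical quotient of $X_r$. The opens $\{X_r\}$ cover $X^{ss}(L)$, and since $X$ is quasi-compact I would extract a finite subcover $X^{ss}(L) = \bigcup_{i=1}^{N} X_{r_i}$.

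Next I would organize the overlaps. For invariant sections $r_i, r_j$ the product $r_i r_j$ is again $G$-invariant and $X_{r_i r_j} = X_{r_i} \cap X_{r_j}$. Because $r_j$ is $G$-invariant it descends to a section $\bar r_j$ on $Y_{r_i}$ with $X_{r_i r_j} = \phi_{r_i}^{-1}\big((Y_{r_i})_{\bar r_j}\big)$; as $(Y_{r_i})_{\bar r_j} \hookrightarrow Y_{r_i}$ is an open immersion, uniformity of $\phi_{r_i}$ shows that $\big((Y_{r_i})_{\bar r_j}, \phi_{r_i}|\big)$ is a categorical quotient of $X_{r_i r_j}$. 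The same holds with $i,j$ exchanged, so both $(Y_{r_i})_{\bar r_j}$ and $(Y_{r_j})_{\bar r_i}$ are uniform categorical quotients of the same space and are therefore canonically isomorphic. These isomorphisms satisfy the cocycle condition on triple intersections, and I would glue the $Y_{r_i}$ to an algebraic space $Y^{ss}(L)$ carrying a morphism $\phi: X^{ss}(L) \to Y^{ss}(L)$ that restricts to each $\phi_{r_i}$.

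Then I would verify that $\phi$ is a uniform categorical quotient. The universal property of Definition \ref{204} is checked locally on the target: a $G$-invariant $\psi: X^{ss}(L) \to Z$ restricts on each $X_{r_i}$ to factor uniquely through some $\varphi_i: Y_{r_i} \to Z$, and on overlaps the two factorizations agree because the restriction $\big((Y_{r_i})_{\bar r_j}, \phi_{r_i}|\big)$ is still a categorical quotient; hence the $\varphi_i$ glue to a unique $\varphi$. Uniformity is inherited piece by piece, since flat base change commutes with the gluing and each $(Y_{r_i}, \phi_{r_i})$ is uniform.

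Finally, for the stable locus I would take $Y^s(L) \subseteq Y^{ss}(L)$ to be the open locus glued from those charts on which the $G$-action is separated, as in Definition \ref{216}(2); then $X^s(L) = \phi^{-1}(Y^s(L))$ by construction, and since the action over $Y^s(L)$ is separated, Proposition \ref{214} promotes the uniform categorical quotient there to a good quotient. I expect the gluing in the second step to be the main obstacle: categorical quotients do not glue in general, and the entire argument hinges on the uniform (base-change-stable) hypothesis built into Definition \ref{216}, which is exactly what guarantees that restrictions to the principal opens $(Y_{r_i})_{\bar r_j}$ remain categorical quotients and are hence canonically identified; checking the cocycle condition and confirming that the glued object is a genuine algebraic space is where the real work lies.
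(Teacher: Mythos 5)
Your proposal is correct and follows essentially the same route as the paper: a finite cover $X^{ss}(L)=\bigcup X_{s_i}$ by quasi-compactness, the uniform categorical quotients $(Y_{s_i},\phi_i)$ supplied by Definition \ref{216}, canonical identifications of the overlaps via uniformity (restriction along open immersions) and uniqueness of categorical quotients, gluing, and Proposition \ref{214} for the stable locus. The only difference is presentational: the paper performs the gluing at the level of \'etale charts, producing $Y^{ss}(L)$ as a groupoid quotient $V/V^1$, whereas you glue the quotient spaces $Y_{r_i}$ directly and are somewhat more explicit about the descent of the invariant sections and the verification of the universal property.
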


\begin{proof}
Since $X$ is a quasi-compact algebraic space, there exists finitely many invariant sections $s_1, \dots, s_n$ of $L$ such that $X^{ss}(L)= \cup_{i=1}^n X_{s_i}$. By the definition of semistable points, each $X_{s_i}$ has a uniform categorical quotient $(Y_{s_i},\phi_i)$ with respect to the $G$-action. We will glue $Y_{s_i}$ together to get an algebraic space, which is the desired categorical quotient $Y^{ss}(L)$.

Let $(U_{s_i},u_{s_i})$ and $(V_{s_i},v_{s_i})$ are local charts of $X_{s_i}$ and $Y_{s_i}$ respectively such that $U_{s_i} \cong V_{s_i} \times_{Y_{s_i}} X_{s_i}$.
\begin{center}
\begin{tikzcd}
    U_{s_i}  \arrow[r,"u_{s_i}"] \arrow[d] & X_{s_i} \arrow[d] \\
    V_{s_i} \arrow[r,"v_{s_i}"]   & Y_{s_i}
\end{tikzcd}
\end{center}
Denote by $U^1_{s_i}:=U_{s_i} \times_{X_{s_i}}U_{s_i}$, $V^1_{s_i}:=V_{s_i} \times_{Y_{s_i}} V_{s_i}$. We use the same notation $s_i$ for the section on $(U_{s_i},u_{s_i})$, $1\leq i\leq n$. Define
\begin{align*}
V_{s_{ij}}=V_{s_i}-\{y | s_j(y)=0\}.
\end{align*}
Clearly, we have
\begin{align*}
\phi^{-1}_{s_i}(V_{s_{ij}}) = U_{s_i} \cap U_{s_j} =  \phi^{-1}_{s_j}(V_{s_{ji}}).
\end{align*}
By assumption, $(V_{s_i},\phi_{s_i})$ is a uniform categorical quotient of $U_{s_i}$ with respect to the induced $G$-action. Therefore, both $V_{ij}$ and $V_{ji}$ are categorical quotients of $U_{s_i} \cap U_{s_j}$. By the universal property of categorical quotients, there is a unique isomorphism $\psi_{ij}: V_{s_{ij}} \xrightarrow{\cong} V_{s_{ji}}$ making the diagram
\begin{center}
\begin{tikzcd}
    & U_{s_i} \cap U_{s_j}  \arrow[rd,"\phi_j"] &  \\
    V_{s_{ij}} \arrow[ur,"\phi_j"] \arrow[rr,"\psi_{ij}"]   & & V_{s_{ji}}
\end{tikzcd}
\end{center}
commutative. Under the isomorphisms $\psi_{ij}$, where $1 \leq i,j \leq n$, the schemes $V_{s_i}$, $1 \leq i \leq n$ can be glued together, and denote by $V$ the resulting scheme. At the same time, the isomorphism $\psi_{ij}$ also induces an isomorphism $V^1_{s_{ij}} \xrightarrow{\cong} V^{1}_{s_{ji}}$. With the same argument, we get a scheme $V^1$ by gluing $\{V^1_{s_{i}}\}_{1 \leq i \leq n}$ via $\{V^1_{s_{ij}}, 1 \leq i,j \leq n\}$. Denote by $Y^{ss}(L)$ the algebraic space $V/V^1$. By the construction above, $Y^{ss}(L)$ is a uniform categorical quotient of $X^{ss}(L)$.

For the stable locus $Y^s(L)$, the proof is the same. In this case, the action of $G$ on $X_{s_i}$ is separated. Therefore, by Proposition \ref{214}, a uniform good quotient exists.
\end{proof}

\begin{rem}\label{218}
Recall that the definition of semistability and stability in the case of schemes requires that $X_r$ is affine \cite[Definition 1.7]{MuFoKir}. One of the reasons is that in the affine case, a uniform categorical quotient exists \cite[Theorem 1.1]{MuFoKir}. In the case of algebraic spaces, if we assume that $U_r$ is affine, then $X_r$ is also an affine scheme \cite[Corollary 6.2.14]{Ol}. Then, it reduces to the classical case.
\end{rem}

The proof of Theorem \ref{217} also implies the following corollary.

\begin{cor}\label{219}
With the same assumption as in Theorem \ref{217}, there exists an invertible bundle $M$ on $Y^{ss}(L)$ such that $\phi^* M =L$.
\end{cor}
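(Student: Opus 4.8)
The plan is to mirror the gluing construction in the proof of Theorem \ref{217}, descending along $\phi$ the tautological trivializations of $L$ furnished by the invariant sections. Recall that $X^{ss}(L)=\bigcup_{i=1}^{n} X_{s_i}$, where $s_1,\dots,s_n$ are $G$-invariant sections of $L$ and $(Y_{s_i},\phi_i)$ is a uniform categorical quotient of $X_{s_i}$, with $\phi|_{X_{s_i}}=\phi_i$. By the very definition of $X_{s_i}$ the section $s_i$ is nowhere vanishing on $X_{s_i}$, so multiplication by $s_i$ gives an isomorphism $\mathcal{O}_{X_{s_i}} \xrightarrow{\sim} L|_{X_{s_i}}$; since $s_i$ is invariant with respect to the linearization $\tau$ on $L$, this trivialization is $G$-equivariant. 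I would therefore put $M_i:=\mathcal{O}_{Y_{s_i}}$ and record the resulting canonical identification $\phi_i^* M_i \cong L|_{X_{s_i}}$.

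The next step is to glue the $M_i$ into a sheaf on $Y^{ss}(L)$. On the overlap $X_{s_is_j}:=X_{s_i}\cap X_{s_j}$ both sections are invertible, so their ratio $s_i/s_j$ is a nowhere-vanishing $G$-invariant function, that is, a $G$-invariant morphism $X_{s_is_j}\to\mathbb{G}_{m,S}$. In the proof of Theorem \ref{217} this overlap is identified (via the isomorphisms $\psi_{ij}$) with a categorical quotient $Y_{s_i}\cap Y_{s_j}$ of $X_{s_is_j}$. Applying the universal property of the categorical quotient with target the algebraic space $\mathbb{G}_{m,S}$, the invariant morphism $s_i/s_j$ factors uniquely through $\phi$, producing an invertible function $\lambda_{ij}$ on $Y_{s_i}\cap Y_{s_j}$ with $\phi^*\lambda_{ij}=s_i/s_j$. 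These functions are precisely the transition data by which I glue the trivial bundles $M_i$.

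It then remains to verify the cocycle condition $\lambda_{ij}\lambda_{jk}=\lambda_{ik}$ on the triple overlaps. This follows from the tautological identity $(s_i/s_j)(s_j/s_k)=s_i/s_k$ on $X_{s_is_js_k}$ together with the uniqueness clause of the universal property, which forces the descended functions to inherit this relation (and similarly $\lambda_{ji}=\lambda_{ij}^{-1}$). The data $\{(M_i,\lambda_{ij})\}$ thus glue to an invertible sheaf $M$ on $Y^{ss}(L)=V/V^1$. By construction $\phi^* M$ is glued from the $\phi_i^* M_i\cong L|_{X_{s_i}}$ along the transition functions $\phi^*\lambda_{ij}=s_i/s_j$, which are exactly the transition functions of $L$ relative to the frames $\{s_i\}$; hence $\phi^* M\cong L|_{X^{ss}(L)}$.

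I expect the main obstacle to be the descent of the invariant transition functions $s_i/s_j$, because the quotient on the semistable locus is only categorical and need not satisfy the good-quotient identity $\mathcal{O}_Y\cong(\phi_*\mathcal{O}_X)^G$, so sheaf-theoretic invariants cannot be invoked directly. The resolution is to descend the functions through the universal property itself: a $G$-invariant morphism into $\mathbb{G}_{m,S}$ factors through the categorical quotient, and any such factorization is automatically an invertible function. Uniqueness in that universal property simultaneously pins down the $\lambda_{ij}$ and yields the cocycle identity, so the only genuinely delicate point is to keep the identifications of the overlaps $Y_{s_i}\cap Y_{s_j}$ as categorical quotients coherent with the gluing already performed in Theorem \ref{217}.
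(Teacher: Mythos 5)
Your proposal is correct and is essentially the paper's own (implicit) argument: the paper merely remarks that Corollary \ref{219} follows from the proof of Theorem \ref{217}, which amounts exactly to the construction you spell out --- trivialize $L$ on each $X_{s_i}$ by the nowhere-vanishing invariant section $s_i$, descend the invariant transition functions $s_i/s_j$ through the categorical quotients of the overlaps via the universal property (with uniqueness giving the cocycle identity), and glue the resulting line bundles over the cover of $Y^{ss}(L)$ built in that proof. The one caveat, which your proof shares with the paper's, is that Definition \ref{216} only provides invariant sections $s_i \in H^0(X,L^{n_i})$ for possibly different powers $n_i$, so the trivialization step really yields $\phi^* M \cong L^N$ for a common power $N$ (the classical form of the statement in \cite{MuFoKir}) unless one assumes, as the wording of Theorem \ref{217} tacitly does, that all $s_i$ are sections of $L$ itself.
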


\subsection{Hilbert-Mumford Criterion}
J. Heinloth generalizes the Hilbert-Mumford criterion to algebraic stacks \cite{Hein}. In this subsection, we consider the special case on algebraic spaces. The setup in this subsection is different from previous ones, and in the rest of this paper, we will follow the setup in this subsection. Let $S$ be an algebraic space over an algebraically field $k$, and denote by $({\rm Sch/S})$ the category of $S$-schemes with respect to the big \'etale topology (or fppf topology).

Let $G$ be a group scheme (over $S$) with a proper action $\sigma$ on a proper algebraic space $X$. Let $\mathbb{G}_m$ be the multiplicative group scheme. An \emph{one-parameter subgroup} of $G$ is a homomorphism $\lambda: \mathbb{G}_m \rightarrow G$. Let $x: \Spec(k) \rightarrow X$ be a point in $X$. We consider the morphism
\begin{align*}
\lambda_x: \mathbb{G}_m \times_S \Spec(k) \xrightarrow{\lambda \times 1_x} G \times_S \Spec(k) \xrightarrow{\sigma_x} X.
\end{align*}
At the point $x$, we can identify $\mathbb{G}_m$ with $\Spec (k[t,t^{-1}])$. By valuative criterion for the properness of the action $\sigma$, we can extend the action to $\Spec (k[t])$. Denote by $\lambda_x(0)$ the specialization of $\lambda_x(t)$ as $t$ goes to zero. Now let $L$ be a $G$-linearization line bundle over $X$. With respect to the homomorphism $\lambda: \mathbb{G}_m \rightarrow G$, we consider the induced $\mathbb{G}_m$-linearization of $L$ restricted to the fixed point $\lambda_x(0)$. This linearization is given by the character of $\mathbb{G}_m$, $\chi(t)=t^r$, for $t \in \mathbb{G}_m$.
\begin{defn}\label{220}
Let $G$ be an algebraic space acting properly on an algebraic space $X$. Let $L$ be a $G$-linearization line bundle on $X$. We fix a closed point $x \in X$, and a one-parameter subgroup $\lambda$ of $G$. We define
\begin{align*}
\mu^L(x,\lambda)=-r.
\end{align*}
\end{defn}

\begin{prop}\label{221}
Let $G$ be a reductive group acting properly on an algebraic space $X$ proper over $S$. Let $L$ be an ample $G$-linearized line bundle on $X$. Then, for any one-parameter subgroup $\lambda$,
\begin{itemize}
\item $x \in X^{ss}(L)$ if and only $\mu^L(x,\lambda) \geq 0$;
\item $x \in X^{s}(L)$ if and only $\mu^L(x,\lambda) > 0$.
\end{itemize}
\end{prop}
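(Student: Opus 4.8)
The plan is to deduce the proposition from J. Heinloth's Hilbert-Mumford criterion for algebraic stacks \cite{Hein}, applied to the quotient stack $\mathcal{X} := [X/G]$. Since $L$ carries the $G$-linearization $\tau$, it descends to a line bundle $\mathcal{L}$ on $\mathcal{X}$, and because $G$ is reductive and $\sigma$ is proper, $\mathcal{X}$ is an algebraic stack of the sort covered by \cite{Hein}: it is separated, and the valuative criterion used in Definition \ref{220} to produce the specialization $\lambda_x(0)$ is exactly the completeness input Heinloth's theorem requires. Before invoking his result I would dispatch the easy direction by hand. If $x \in X^{ss}(L)$, then by Definition \ref{216} there is a $G$-invariant section $s \in H^0(X, L^n)$ with $s(x) \neq 0$; restricting $s$ along the orbit map $\lambda_x$ and passing to the limit $t \to 0$ forces the weight of the $\mathbb{G}_m$-linearization of $L$ at the fixed point $\lambda_x(0)$ to be nonpositive, so that $\mu^L(x,\lambda) \geq 0$. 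In the stable case the separatedness of the $G$-action on $X_s$ (Definition \ref{216}), equivalently the goodness of the quotient via Proposition \ref{214}, upgrades this to strict inequality.

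The content lies in the converse, and this is precisely what Heinloth's theorem supplies. A one-parameter subgroup $\lambda \colon \mathbb{G}_m \to G$ together with the specialization $\lambda_x(0)$ assembles into a morphism $[\mathbb{A}^1/\mathbb{G}_m] \to \mathcal{X}$ that degenerates $x$ to the $\mathbb{G}_m$-fixed point $\lambda_x(0)$; the character $\chi(t) = t^r$ describing the action of $\mathbb{G}_m$ on the fiber $\mathcal{L}|_{\lambda_x(0)}$ is the weight recorded in Definition \ref{220}, so Heinloth's numerical function evaluated on this degeneration equals $\mu^L(x,\lambda) = -r$ up to the fixed sign convention. The decisive step is to identify the two notions of semistability: I would show that the locus $X^{ss}(L)$ of Definition \ref{216}, cut out by nonvanishing of $G$-invariant sections of powers of $L$ together with existence of a uniform categorical quotient of the corresponding $X_s$, coincides with the semistable locus of the pair $(\mathcal{X}, \mathcal{L})$ in Heinloth's sense. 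One inclusion follows from the identification $H^0(X, L^n)^G \cong H^0(\mathcal{X}, \mathcal{L}^n)$, under which a nonvanishing invariant section exhibits $x$ as a point of the quotient built in Theorem \ref{217}; the reverse inclusion uses ampleness of $L$ to guarantee, after passing to a sufficiently divisible power, enough invariant sections to detect every semistable point.

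With this dictionary established, Heinloth's criterion asserts that $x$ lies in the semistable (respectively stable) locus of $(\mathcal{X},\mathcal{L})$ if and only if $\mu(\,\cdot\,) \geq 0$ (respectively $> 0$) for every nontrivial one-parameter subgroup $\lambda$, and substituting $\mu = \mu^L(x,\lambda)$ yields the two stated equivalences. I expect the main obstacle to be this matching of definitions rather than either inequality taken alone: one must verify that semistability defined invariant-theoretically through categorical quotients of the open subspaces $X_s$ (Definition \ref{216}, Theorem \ref{217}) agrees with the intrinsic stack-theoretic semistability of \cite{Hein}, and in particular that the limits $\lambda_x(0)$ produced by properness of $\sigma$ are the same degenerations seen by Heinloth's $\Theta$-stratification. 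Verifying the remaining hypotheses of \cite{Hein} — reductivity of $G$, properness of $X$ over $S$, and positivity of $\mathcal{L}$ — is routine given the ampleness of $L$ and Lemma \ref{215}, and completes the reduction.
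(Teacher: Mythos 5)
Your proposal is correct and is essentially the paper's own approach: the paper proves this proposition by citing the existence of $X^{ss}(L)$ from Theorem \ref{217}, asserting that the argument of \cite[Theorem 2.1]{MuFoKir} carries over, and noting that the statement is a special case of Heinloth's construction \cite[\S 1]{Hein} --- and your write-up is an elaboration of that last reduction (descent of $L$ to the quotient stack $[X/G]$, identification of $\mu^L(x,\lambda)$ with Heinloth's numerical invariant, and matching of the two semistability loci), together with a hand proof of the easy direction via invariant sections. The step you single out as the main obstacle --- reconciling the quotient-theoretic semistability of Definition \ref{216} with Heinloth's intrinsic stack semistability --- is indeed exactly the content the paper leaves to the citations, so your sketch is, if anything, more explicit than the paper's proof.
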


\begin{proof}
The existence of $X^{ss}(L)$ is proved in Theorem \ref{217}, and the proposition can be proved in the same way as \cite[Theorem 2.1]{MuFoKir}. This proposition is also a special case of J. Heinloth's construction (see \cite[\S 1]{Hein} for details).
\end{proof}

Let $\widetilde{{\rm Grass}_S}(r,n): ({\rm Sch}/S)^{\rm op} \rightarrow ({\rm Set})$ be the Grassmannian functor such that for each $S$-scheme $T$, $\widetilde{{\rm Grass}_S}(r,n)(T)$ is the set of $r$-dimensional vector bundles $V_T$ over $T$ together with a linear inclusion $V_T \rightarrow T \otimes k^n$. Two inclusions are equivalent if there is a commutative diagram
\begin{center}
\begin{tikzcd}
V_T \arrow[r, hook] \arrow[d,"\cong"]  & T \times k^n \arrow[d,equal] \\
V'_T \arrow[r, hook]& T \times k^n
\end{tikzcd}
\end{center}
The Grassmannian functor is represented by an algebraic space over $S$, and we denote by ${\rm Grass}_S(r,n)$. There is a canonical invertible sheaf $\mathcal{O}(1)$ such that its restriction to a point $P \in {\rm Grass}_S(r,n)$ is isomorphic to $\wedge^r P$.

There is a natural $\text{SL}(n)$-action on $\text{Grass}_S(r,n)$, which induces a natural action on $\mathcal{O}(1)$.

\begin{cor}\label{222}
A point $P \in {\rm Grass}_S(r,n)$ is semistable with respect to the ${\rm SL}(V)$-action and the line bundle $\mathcal{O}(1)$, if and only if for any linear subspace $L \subseteq S^n$,
\begin{align*}
\frac{\dim(P \cap L)+1 }{r+1} < \frac{\dim(P)+1}{n+1}.
\end{align*}
\end{cor}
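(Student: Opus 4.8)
The plan is to derive the corollary from the Hilbert--Mumford criterion of Proposition \ref{221} by making the numerical invariant $\mu^{\mathcal{O}(1)}(P,\lambda)$ completely explicit. The Grassmannian ${\rm Grass}_S(r,n)$ is proper over $S$, so for each one-parameter subgroup $\lambda\colon\mathbb{G}_m\to{\rm SL}(V)$ the specialization $\lambda_P(0)$ of Definition \ref{220} exists, and Proposition \ref{221} tells us that $P$ is semistable if and only if $\mu^{\mathcal{O}(1)}(P,\lambda)\geq 0$ for all such $\lambda$ (and stable if and only if every inequality is strict). By Definition \ref{220} the number $\mu^{\mathcal{O}(1)}(P,\lambda)$ is the negative of the character by which $\mathbb{G}_m$ acts on the fibre of $\mathcal{O}(1)$ at $\lambda_P(0)$, a datum local at the geometric point $P\colon\Spec(k)\to{\rm Grass}_S(r,n)$. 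Hence the $S$-structure is irrelevant to the computation, and I would pass to the residue field $k$ and work with the ordinary Grassmannian of $r$-dimensional subspaces of $k^n$, using the identification $\mathcal{O}(1)|_P\cong\wedge^r P$ recorded before the statement to make the character explicit.

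After conjugating by a suitable element of ${\rm SL}(V)$ (which preserves semistability), every $\lambda$ may be taken diagonal, $\lambda(t)={\rm diag}(t^{w_1},\dots,t^{w_n})$ with $w_1\leq\cdots\leq w_n$ and $\sum_i w_i=0$, and such a $\lambda$ determines the flag $L_j=\langle e_1,\dots,e_j\rangle$. Setting $d_j:=\dim(P\cap L_j)$ and transporting the Plücker vector of $P$ to its limit as $t\to 0$, the surviving component is the one of extremal weight, and an adapted-basis argument identifies that weight with $\sum_{j} w_j\,(d_j-d_{j-1})$. Thus, up to the sign fixed by the linearization in Definition \ref{220},
\begin{align*}
\mu^{\mathcal{O}(1)}(P,\lambda)=\pm\sum_{j=1}^n w_j\,\big(\dim(P\cap L_j)-\dim(P\cap L_{j-1})\big).
\end{align*}

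Finally I would convert ``$\mu^{\mathcal{O}(1)}(P,\lambda)\geq 0$ for all $\lambda$'' into a statement about a single subspace. The right-hand side above is linear in the weight vector $(w_1,\dots,w_n)$, and the admissible weights form the cone $\{w_1\leq\cdots\leq w_n,\ \sum_i w_i=0\}$; by linearity it therefore suffices to test $\lambda$ on the extreme rays of this cone. These rays are precisely the two-step weight vectors that are constant on $\{1,\dots,m\}$ and on $\{m+1,\dots,n\}$, and each of them corresponds to a single subspace $L=L_m$ with $\dim L=m$. Substituting such a $\lambda$, the weight reduces to a linear expression in $\dim(P\cap L)$, $\dim L$, $\dim P=r$ and $n$, and rearranging produces the displayed inequality; the stable case follows verbatim by replacing the weak inequality of Proposition \ref{221} with the strict one. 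I expect the main obstacle to be exactly this last reduction together with the bookkeeping: one must verify that the extreme rays genuinely suffice (a convexity argument on the cone of dominant weights), and then match the resulting normalization to the precise fractional form in the statement, keeping careful track of the sign introduced by the identification $\mathcal{O}(1)|_P\cong\wedge^r P$.
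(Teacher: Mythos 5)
Your argument is correct in outline, but it is a genuinely different route from the paper's. The paper's proof is a two-line descent: for a surjective \'etale morphism $U \rightarrow S$ with $U$ a scheme, ${\rm Grass}_U(r,n) \rightarrow {\rm Grass}_S(r,n)$ is a local chart of the algebraic space, Theorem \ref{206} reduces the statement to ${\rm Grass}_U(r,n)$, which is a scheme, and there the criterion is quoted as a special case of \cite[Proposition 4.3]{MuFoKir}. You instead stay on the algebraic space and re-derive that classical result from scratch: Hilbert--Mumford via Proposition \ref{221}, diagonalization of the one-parameter subgroup, the Pl\"ucker weight formula $\sum_j w_j\bigl(\dim(P\cap L_j)-\dim(P\cap L_{j-1})\bigr)$, and the reduction to a single subspace by testing the extreme rays of the cone $\{w_1\leq\cdots\leq w_n,\ \sum_i w_i=0\}$. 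The weight computation and convexity reduction are exactly the content of Mumford's own proof of that proposition, so what your route buys is a self-contained argument with an explicit numerical function $\mu^{\mathcal{O}(1)}(P,\lambda)$ (and it matches the introduction's advertisement that Corollary \ref{222} is an application of the Hilbert--Mumford criterion); what the paper's route buys is brevity, invoking the scheme-theoretic result as a black box and letting Theorem \ref{206} do all the algebraic-space work.

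One step of yours does not go through as written: Proposition \ref{221} assumes the reductive group acts \emph{properly} on $X$, and the ${\rm SL}(V)$-action on ${\rm Grass}_S(r,n)$ is not proper --- the stabilizer of a point is a parabolic subgroup, affine of positive dimension, so $\Phi: G\times_S X \rightarrow X\times_S X$ has non-proper fibers over the diagonal. (Your justification for the existence of the limit $\lambda_P(0)$ is fine, since that only needs properness of ${\rm Grass}_S(r,n)$ over $S$.) The classical Hilbert--Mumford criterion for a projective scheme with an ample linearized bundle needs no properness of the action, so the repair is precisely the paper's descent step: pass to an \'etale chart $U \rightarrow S$ over which the Grassmannian is a projective scheme and run your weight computation there, citing \cite[Theorem 2.1]{MuFoKir} in place of Proposition \ref{221}. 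Finally, your worry about ``matching the precise fractional form in the statement'' is well founded, but not for the reason you expect: your computation (like \cite[Proposition 4.3]{MuFoKir}) yields $\dim(P\cap L)/\dim(L)\leq r/n$ for all proper nonzero $L$, with strict inequality characterizing stability, whereas the displayed inequality of Corollary \ref{222} (denominators $r+1$ and $n+1$, no $\dim L$, and a strict inequality for semistability) appears to be a misstatement of this; neither your derivation nor the paper's citation produces it verbatim.
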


\begin{proof}
Let $U \rightarrow S$ be a surjective \'etale morphism. Then, the induced morphism ${\rm Grass}_U(r,n) \rightarrow {\rm Grass}_S(r,n)$ is also surjective \'etale, which is a local chart of the algebraic space ${\rm Grass}_S(r,n)$. Therefore, by Theorem \ref{206}, it is equivalent to prove the statement on ${\rm Grass}_U(r,n)$, which is a scheme, and this is proved as a special case in \cite[Proposition 4.3]{MuFoKir}.
\end{proof}

\section{Moduli Spaces of Coherent Sheaves on Projective Deligne-Mumford Stacks}
\subsection{Preliminaries}
Let $S$ be an algebraic space, which is locally of finite type over an algebraically closed field $k$. A Deligne-Mumford stack $\mathcal{X}$ over $S$ is a morphism $\mathcal{X} \rightarrow S$, which is also considered as a family of Deligne-Mumford stacks over $S$. Suppose that $\mathcal{X}$ has a \emph{coarse moduli space} $X$ (as an algebraic space). Denote by $\pi: \mathcal{X} \rightarrow X$ the natural morphism of algebraic spaces. Note that there is a natural morphism $\rho:X \rightarrow S$. The stack $\mathcal{X}$ is \emph{tame} if the functor
\begin{align*}
\pi_*: {\rm QCoh}(\mathcal{X}) \rightarrow {\rm QCoh}(X)
\end{align*}
is exact, where ${\rm QCoh}(-)$ is the category of quasi-coherent sheaves.

Now we fix a locally free sheaf $\mathcal{E}$ on $\mathcal{X}$, and define two functors
\begin{align*}
F_{\mathcal{E}} : {\rm QCoh}(\mathcal{X}) \rightarrow {\rm QCoh}(X), \quad G_{\mathcal{E}}:  {\rm QCoh}(X) \rightarrow {\rm QCoh}(\mathcal{X})
\end{align*}
as follows
\begin{align*}
F_{\mathcal{E}}(\mathcal{F})= \pi_* \mathcal{H}om_{\mathcal{O}_{\mathcal{X}}}(\mathcal{E},\mathcal{F}), \quad G_{\mathcal{E}}(F)=\pi^*F \otimes \mathcal{E},
\end{align*}
where $\mathcal{F} \in {\rm QCoh}(\mathcal{X})$ and $F \in {\rm QCoh}(X)$. The functor $F_{\mathcal{E}}$ is exact since $\pi_*$ is exact and $\mathcal{E}$ is a locally free sheaf. However, the functor $G_{\mathcal{E}}$ may not be exact. The composition of the two functors
\begin{align*}
G_{\mathcal{E}} \circ F_{\mathcal{E}}: {\rm QCoh}(\mathcal{X}) \rightarrow {\rm QCoh}(\mathcal{X}),
\end{align*}
is
\begin{align*}
G_{\mathcal{E}} \circ F_{\mathcal{E}}(\mathcal{F})=\pi^* \pi_* \mathcal{H}om_{\mathcal{O}_{\mathcal{X}}}(\mathcal{E},\mathcal{F})\otimes \mathcal{E}.
\end{align*}
Denote by
\begin{align*}
\theta_{\mathcal{E}}(\mathcal{F}): G_{\mathcal{E}} \circ F_{\mathcal{E}}(\mathcal{F}) \rightarrow \mathcal{F}.
\end{align*}
the adjunction morphism left adjoint to the identity morphism $\pi_*(\mathcal{F} \otimes \mathcal{E}^{\vee}) \xrightarrow{\rm id} \pi_*(\mathcal{F} \otimes \mathcal{E}^{\vee})$.

\begin{defn}\label{301}
A locally free sheaf $\mathcal{E}$ is a \emph{generator} for $\mathcal{F} \in {\rm QCoh}(\mathcal{X})$, if the morphism
\begin{align*}
\theta_{\mathcal{E}}(\mathcal{F}): \pi^* \pi_* \mathcal{H}om_{\mathcal{O}_{\mathcal{X}}}(\mathcal{E},\mathcal{F})\otimes \mathcal{E} \rightarrow \mathcal{F}
\end{align*}
is surjective. A locally free sheaf $\mathcal{E}$ is a \emph{generating sheaf} of $\mathcal{X}$, if it is a generator for every quasi-coherent sheaf on $\mathcal{X}$.
\end{defn}

A Deligne-Mumford stack $\mathcal{X}$ over $S$ is a \emph{global quotient} if it is isomorphic to a stack $[U/G]$ where $U$ is an algebraic space of finite type over $S$ and $G \rightarrow S$ is a flat group scheme, which is a subgroup scheme of $\text{GL}_{N,S}$ for some integer $N$. Olsson and Starr proved that if $\mathcal{X}$ is a global quotient, then there exists a generating sheaf of $\mathcal{X}$.

\begin{thm}[Theorem 5.7 in \cite{OlSt}]\label{302}
Let $\mathcal{X}$ be a tame Deligne-Mumford stack, which is a separated global quotient over $S$, then there exists a generating sheaf $\mathcal{E}$ over $\mathcal{X}$.
\end{thm}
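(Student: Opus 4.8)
The plan is to reconstruct the Olsson--Starr construction, which is purely representation-theoretic once the stack is presented as a quotient. Since $\mathcal{X}$ is a separated global quotient, I would write $\mathcal{X}\cong[U/G]$ with $U$ of finite type over $S$ and $G\subseteq\mathrm{GL}_{N,S}$ a flat subgroup scheme. The first step is to set up the standard dictionary between locally free sheaves on $[U/G]$ and $G$-equivariant locally free sheaves on $U$: any finite-dimensional representation $W$ of $G$ determines by descent a locally free sheaf $\mathcal{E}_W$ on $[U/G]$, namely the sheaf associated to the $G$-equivariant bundle $U\times W$ on $U$. The candidate generating sheaf will be of this form, so the construction reduces to choosing $W$ appropriately.

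The second step is to reduce the condition of Definition \ref{301} to a fiberwise statement. Because $\mathcal{X}$ is tame, $\pi_*$ is exact, and \'etale locally on $X$ the stack has the form $[\Spec(A)/G_\xi]$, where $G_\xi$ is the (finite, order prime to $\mathrm{char}\,k$) stabilizer of a geometric point $\xi$. Over the residual gerbe $BG_\xi$ the functors $F_{\mathcal{E}}$ and $G_{\mathcal{E}}$ specialize to $G_\xi$-invariants and to tensoring-up, so that $\theta_{\mathcal{E}}(\mathcal{F})$ becomes the evaluation map $\mathrm{Hom}_{G_\xi}(\mathcal{E}|_\xi,\mathcal{F})\otimes\mathcal{E}|_\xi\to\mathcal{F}$. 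By Nakayama's lemma, surjectivity of $\theta_{\mathcal{E}}(\mathcal{F})$ for all $\mathcal{F}$ can be tested on these fibers, and one checks that it holds precisely when the $G_\xi$-representation $\mathcal{E}|_\xi$ contains every irreducible representation of $G_\xi$ as a summand, for every geometric point $\xi$.

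The third step is to produce $W$ meeting this fiberwise condition. Let $V$ be the standard $N$-dimensional representation of $\mathrm{GL}_{N,S}$ restricted to $G$; it is faithful, and so is its restriction to each finite stabilizer $G_\xi\subseteq G$. By the classical fact that every irreducible representation of a group admitting a faithful representation $V$ occurs as a subquotient of $\bigoplus_{a+b\le M}V^{\otimes a}\otimes(V^{\vee})^{\otimes b}$ for $M$ large, each $G_\xi$ has all of its irreducibles appearing there once $M$ is large enough; tameness makes $G_\xi$ linearly reductive, so ``subquotient'' can be upgraded to ``direct summand.'' Setting $W=\bigoplus_{a+b\le M}V^{\otimes a}\otimes(V^{\vee})^{\otimes b}$ and $\mathcal{E}=\mathcal{E}_W$ then yields a generating sheaf by the criterion of the second step, provided a single $M$ works for all points at once.

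The main obstacle is exactly this uniformity: a priori the required $M$ depends on $\xi$. Here I would use that $\mathcal{X}$ is of finite type, hence quasi-compact, so that the inertia stack $I\mathcal{X}\to\mathcal{X}$ meets only finitely many strata and only finitely many isomorphism types of stabilizer $G_\xi$ occur; taking the maximum of these finitely many bounds gives a uniform $M$. The remaining technical points---exactness of $\pi_*$ for the fiberwise reduction and complete reducibility for converting subquotients into summands---are precisely what tameness and the hypothesis that $|G_\xi|$ is prime to the characteristic supply, which is why both assumptions in the statement are needed.
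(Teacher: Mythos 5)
This theorem is not proved in the paper at all---it is imported verbatim as Theorem 5.7 of \cite{OlSt}, so the only proof to compare against is Olsson--Starr's original argument. Your sketch correctly reconstructs exactly that route---the reduction, via tameness and Nakayama, to the fiberwise criterion that every irreducible representation of every stabilizer $G_\xi$ must occur as a summand of $\mathcal{E}|_\xi$, the tensor powers $V^{\otimes a}\otimes(V^\vee)^{\otimes b}$ of the faithful standard representation of $G\subseteq\mathrm{GL}_{N,S}$, and the uniform bound on $M$ coming from finiteness of the inertia (bounded stabilizer order suffices here, which is slightly weaker than the ``finitely many isomorphism types'' you invoke)---so it is correct and takes essentially the same approach as the source on which the paper relies.
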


Now we consider a special Deligne-Mumford stack.
\begin{defn}\label{303}
A \emph{projective Deligne-Mumford stack} $\mathcal{X}$ over $S$ is a tame Deligne-Mumford stack $p: \mathcal{X} \rightarrow S$, which is a separated locally finitely-presented global quotient, such that the coarse moduli space $X$ is projective over $S$.
\end{defn}
We fix a locally free sheaf $\mathcal{H}$, not necessarily to be a generating sheaf. Furthermore, we fix a polarization $\mathcal{O}_X(1)$ over $X$, where a polarization is a relatively ample invertible bundle on $X$ with respect to the projective morphism $X \rightarrow S$.

Let $\mathcal{F}$ be a coherent sheaf on $\mathcal{X}$. The \emph{modified Hilbert polynomial} $P_{\mathcal{H}}(\mathcal{F},m)$ is defined as
\begin{align*}
P_{\mathcal{H}}(\mathcal{F},m)=\chi(\mathcal{X},\mathcal{F} \otimes \mathcal{H}^{\vee} \otimes \pi^* \mathcal{O}_X(1)^m), \quad m \gg 0.
\end{align*}
Since the functor $\pi_*: {\rm QCoh}(\mathcal{X}) \rightarrow {\rm QCoh}(X)$ is exact, the modified Hilbert polynomial can be written as the classical Hilbert polynomial for the coherent sheaf $F_{\mathcal{H}}(\mathcal{F})(m)$ over $X$,
\begin{align*}
P_{\mathcal{H}}(\mathcal{F},m)=\chi(X,F_{\mathcal{H}}(\mathcal{F})(m)),\quad  m \gg 0,
\end{align*}
where $F_{\mathcal{H}}(\mathcal{F})(m)=F_{\mathcal{H}}(\mathcal{F}) \otimes \mathcal{O}_X(m)$. If $\mathcal{F}$ is pure of dimension $d$, the function $P_{\mathcal{H}}(\mathcal{F},m)$ is a polynomial (with respect to the variable $m$) and we can write it in the following way
\begin{align*}
P_{\mathcal{H}}(\mathcal{F},m)=\sum_{i=0}^d \alpha_{\mathcal{H},i}(\mathcal{F})\frac{m^i}{i!}.
\end{align*}
We use the notation $P_{\mathcal{H}}(\mathcal{F})$ for the modified Hilbert polynomial of $\mathcal{F}$. The \emph{reduced modified Hilbert polynomial} $p_{\mathcal{H}}(\mathcal{F})$ is a monic polynomial with rational coefficients defined as
\begin{align*}
p_{\mathcal{H}}(\mathcal{F})=\frac{P_{\mathcal{H}}(\mathcal{F})}{\alpha_{\mathcal{H},d}(\mathcal{F})}.
\end{align*}
A pure sheaf $\mathcal{F}$ is \emph{$p_{\mathcal{H}}$-semistable} (resp. \emph{$p_{\mathcal{H}}$-stable}), if for every proper subsheaf $\mathcal{F}' \subseteq \mathcal{F}$ we have
\begin{align*}
p_{\mathcal{H}}(\mathcal{F}') \leq p_{\mathcal{H}}(\mathcal{F})\quad (\text{resp. } p_{\mathcal{E}}(\mathcal{F}') < p_{\mathcal{H}}(\mathcal{F})).
\end{align*}
If $\mathcal{H}$ is a generating sheaf, this definition is exactly what Nironi defined in \cite[\S 3.2]{Nir}.

With respect to the above definition, Jordan-H\"older filtrations exist for $\mathcal{H}$-semistable coherent sheaves, which induce the $S$-equivalence condition for $\mathcal{H}$-semistable coherent sheaves. The construction of Jordan-H\"older filtrations and proof of the existence is exactly the same as the case that $\mathcal{H}$ is a generating sheaf (\cite[\S 3.4]{Nir} and \cite[\S 3.4]{Sun2020}).

\subsection{Quot-functors}
Let $S$ be a locally of finite type algebraic space over an algebraically closed field $k$. Denote by $(\text{Sch}/S)$ the site of $S$-schemes with respect to the big \'etale topology. Let $\mathcal{X}$ be a separated and locally finitely-presented Deligne-Mumford stack over $S$. We take a coherent sheaf $\mathcal{G}$ on $\mathcal{X}$. We define the moduli problem
\begin{align*}
\widetilde{{\rm Quot}}_S(\mathcal{G},\mathcal{X}): (\text{Sch}/S)^{{\rm op}} \rightarrow \text{Set}
\end{align*}
as follows. For each $S$-scheme $T$, define $\mathcal{X}_T$ as $\mathcal{X} \times_{S} T$ and $\mathcal{G}_T$ the pullback of $\mathcal{G}$ to $\mathcal{X}_T$. Define $\widetilde{{\rm Quot}}_S(\mathcal{G},\mathcal{X})(T)$ to be the set of $\mathcal{O}_{\mathcal{X}_T}$-module quotients $[\mathcal{G}_T \rightarrow \mathcal{F}]$ such that
\begin{enumerate}
\item $\mathcal{F}$ is a locally finitely-presented quasi-coherent sheaf;
\item $\mathcal{F}$ is flat over $T$;
\item the support of $\mathcal{F}$ is proper over $T$.
\end{enumerate}
The moduli problem $\widetilde{{\rm Quot}}_S(\mathcal{G},\mathcal{X})$ is called the \emph{quot-functor}. The quot-functor $\widetilde{{\rm Quot}}_S(\mathcal{G},\mathcal{X})$ has a natural stack structure. In other words, $\widetilde{{\rm Quot}}_S(\mathcal{G},\mathcal{X})$ is a sheaf over $(\text{Sch}/S)$.

Artin proved that the quot-functor $\widetilde{{\rm Quot}}_S(\mathcal{G},\mathcal{X})$ is represented by a separated and locally finitely-presented algebraic space over $S$ when $\mathcal{X}$ is an algebraic space \cite{Art}. Olsson and Starr generalized this result to Deligne-Mumford stacks.
\begin{thm}[Theorem 1.1 in \cite{OlSt}]\label{304}
With respect to the above notation, the quot-functor $\widetilde{{\rm Quot}}_S(\mathcal{G},\mathcal{X})$ is represented by an algebraic space which is separated and locally finitely presented over $S$.
\end{thm}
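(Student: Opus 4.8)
The plan is to verify the hypotheses of Artin's representability criterion for algebraic spaces. The functor $\widetilde{{\rm Quot}}_S(\mathcal{G},\mathcal{X})$ is an \'etale (indeed fppf) sheaf on $(\text{Sch}/S)$, by fpqc descent for quasi-coherent quotients together with descent of flatness and of properness of support. Since $S$ is an algebraic space, I would first choose an \'etale surjection $S' \to S$ from a scheme, prove representability over $S'$, and then use the sheaf property to descend to an algebraic space over $S$. Over the scheme base $S'$ one checks the four conditions of Artin's theorem: (i) the functor is limit-preserving (locally of finite presentation); (ii) it carries a coherent deformation-obstruction theory; (iii) formal deformations are effective; (iv) versality is open.

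Condition (i) is routine and is argued exactly as for ordinary Quot schemes, working on an \'etale atlas of $\mathcal{X}$ and using that $\mathcal{X}$ is locally finitely presented and $\mathcal{G}$ coherent. For the deformation theory, fix a quotient $[q:\mathcal{G}_T \to \mathcal{F}]$ over $T=\Spec(A)$ with kernel $\mathcal{K}={\rm ker}(q)$, and a square-zero extension $A' \to A$ with ideal $M$. The standard computation, valid verbatim over a Deligne-Mumford stack because such a stack is \'etale-locally a scheme, realizes the flat lifts of $q$ over $A'$ as a torsor under ${\rm Hom}_{\mathcal{X}_T}(\mathcal{K},\mathcal{F})\otimes_A M$, with obstruction class in ${\rm Ext}^1_{\mathcal{X}_T}(\mathcal{K},\mathcal{F})\otimes_A M$. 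Since $\mathcal{F}$ has support proper over $T$ and all sheaves in sight are coherent, cohomology and base change for the proper morphism supporting $\mathcal{F}$ shows that these ${\rm Hom}$ and ${\rm Ext}^1$ groups are computed by coherent $\mathcal{O}_T$-modules compatible with base change; this is precisely the coherence demanded in (ii), and the openness of versality in (iv) follows formally once the obstruction theory is coherent.

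The main obstacle is condition (iii), the effectivity of formal deformations. Given a compatible family of quotients over the Artinian quotients $A_n=A/\mathfrak{m}^{n+1}$ of a complete Noetherian local ring $(A,\mathfrak{m})$, one must algebraize it to a single quotient $[\mathcal{G}_A \to \mathcal{F}]$ over $\Spec(A)$ inducing the given system. As the supports are proper over the $A_n$, this is exactly the content of Grothendieck's existence theorem (formal GAGA) for coherent sheaves on a proper Deligne-Mumford stack, applied to the formal completion of the closed substack supporting $\mathcal{F}$ (see \cite{Ol}). This existence theorem is the genuinely nontrivial ingredient, and it is precisely here that properness of the support of $\mathcal{F}$ is indispensable.

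With (i)--(iv) in hand, Artin's theorem produces a locally finitely-presented algebraic space representing $\widetilde{{\rm Quot}}_S(\mathcal{G},\mathcal{X})$ over $S'$, and \'etale descent yields the representing algebraic space over $S$; local finite presentation is condition (i). Separatedness is then checked by the valuative criterion: over a discrete valuation ring, two quotients of $\mathcal{G}$ agreeing at the generic point must agree, since the uniqueness of the limiting kernel $\mathcal{K}$ follows from flatness over the base together with the properness of the supports. This recovers the algebraic-space representability that Artin established for algebraic spaces \cite{Art} and that Olsson and Starr extended to Deligne-Mumford stacks \cite{OlSt}.
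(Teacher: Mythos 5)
The first thing to say is that the paper contains no proof of this statement at all: Theorem \ref{304} is imported verbatim, with attribution, as Theorem 1.1 of \cite{OlSt}, and no argument for it appears anywhere in the text. So there is no ``paper's own proof'' to compare yours against; the only meaningful comparison is with the original Olsson--Starr argument. Measured against that, your sketch retraces essentially the same route: Olsson and Starr also prove representability by verifying Artin's criterion (limit preservation, deformation--obstruction theory for quotients, effectivity of formal deformations, openness of versality), and the genuinely new ingredient in their paper is exactly the one you flag as the main obstacle --- a Grothendieck existence theorem for separated Deligne--Mumford stacks, which they establish by comparison with the coarse moduli space. So your plan is the standard one, and citing \cite{Ol} for formal GAGA is legitimate in a modern write-up even though Olsson--Starr had to prove it themselves.

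A few points in your sketch are stated too loosely, though none is fatal to the strategy. First, the deformation-theoretic groups should be ${\rm Hom}_{\mathcal{X}_T}(\mathcal{K},\mathcal{F}\otimes_A M)$ and ${\rm Ext}^1_{\mathcal{X}_T}(\mathcal{K},\mathcal{F}\otimes_A M)$; rewriting them as ${\rm Hom}_{\mathcal{X}_T}(\mathcal{K},\mathcal{F})\otimes_A M$ already presupposes the coherence and base-change statements you only invoke afterwards, so the logical order matters. Second, ``openness of versality follows formally'' is too quick: it is a separate condition in Artin's criterion and requires the constructibility/base-change compatibilities of the obstruction theory, not just its coherence. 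Third, Artin's criterion in the form you use needs an excellent base (e.g.\ of finite type over a field); this is harmless in the present paper, where $S$ is locally of finite type over $k$, but Theorem 1.1 of \cite{OlSt} is asserted for more general $S$, and reducing to the noetherian case requires a limit argument you do not mention. Finally, in your separatedness check, uniqueness of the extension over a discrete valuation ring follows from flatness alone --- a coherent subsheaf of an $R$-flat sheaf that vanishes at the generic point is zero --- so properness of the supports is not what drives that step; it is needed elsewhere (to make the ${\rm Hom}$ and ${\rm Ext}$ modules coherent and for effectivity).
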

Denote by ${\rm Quot}_S(\mathcal{G},\mathcal{X})$ the algebraic space representing $\widetilde{{\rm Quot}}_S(\mathcal{G},\mathcal{X})$. We would like to use the notations $\widetilde{{\rm Q}}(\mathcal{G},\mathcal{X})$ for the Quot-functor and ${\rm Q}(\mathcal{G},\mathcal{X})$ for the corresponding algebraic space, and if there is no ambiguity, we also omit the subscript $S$.

Now let $\mathcal{X}$ be a projective Deligne-Mumford stack over $S$. We fix an integer polynomial $P$. Define $\widetilde{{\rm Q}}(\mathcal{G},\mathcal{X},P)$ to be the sub-functor of $\widetilde{{\rm Q}}(\mathcal{G},\mathcal{X})$ such that for each $S$-scheme $T$, $\widetilde{{\rm Q}}(\mathcal{G},\mathcal{X},P)(T)$ consists of all quotients $[\mathcal{G}_T \rightarrow \mathcal{F}] \in \widetilde{{\rm Q}}(\mathcal{G},\mathcal{X})(T)$, of which the modified Hilbert polynomial is $P$. Denote by the algebraic space ${\rm Q}(\mathcal{G},\mathcal{X},P)$ representing $\widetilde{{\rm Q}}(\mathcal{G},\mathcal{X},P)$. Furthermore, ${\rm Q}(\mathcal{G},\mathcal{X},P)$ is a projective scheme over $S$ when $S$ is an affine scheme.
\begin{thm}[Theorem 1.5 in \cite{OlSt}]\label{305}
Suppose that $S$ is an affine scheme and $\mathcal{X}$ is a projective Deligne-Mumford stack over $S$. The algebraic space ${\rm Q}(\mathcal{G},\mathcal{X},P)$ is a projective scheme over $S$.
\end{thm}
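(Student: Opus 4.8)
The plan is to transport the problem from the stack $\mathcal{X}$ to its coarse moduli space $X$ by means of a generating sheaf, and thereby reduce to the classical projectivity of Quot schemes over a projective scheme. Since $S$ is affine and, by Definition \ref{303}, the coarse space $X$ is projective over $S$, the space $X$ is in fact a projective scheme over $S$. Moreover, $\mathcal{X}$ is a tame, separated, locally finitely-presented global quotient, so Theorem \ref{302} supplies a generating sheaf $\mathcal{E}$ on $\mathcal{X}$; I take the modified Hilbert polynomial with respect to $\mathcal{E}$, so that $P$ coincides with the ordinary Hilbert polynomial of $F_{\mathcal{E}}(\mathcal{F})$ on $X$.

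First I would build the comparison morphism. The functor $F_{\mathcal{E}}(-)=\pi_* \mathcal{H}om_{\mathcal{O}_{\mathcal{X}}}(\mathcal{E},-)$ is exact (because $\pi_*$ is exact on the tame stack and $\mathcal{E}$ is locally free) and commutes with base change along any $T\to S$, so it sends a $T$-flat family of quotients $[\mathcal{G}_T\to\mathcal{F}]$ with modified Hilbert polynomial $P$ to a $T$-flat family $[F_{\mathcal{E}}(\mathcal{G}_T)\to F_{\mathcal{E}}(\mathcal{F})]$ on $X_T$ with the same Hilbert polynomial $P$. This is a natural transformation $\widetilde{{\rm Q}}(\mathcal{G},\mathcal{X},P)\to\widetilde{{\rm Q}}(F_{\mathcal{E}}(\mathcal{G}),X,P)$, hence a morphism of representing spaces
\[
F_{\mathcal{E}}:{\rm Q}(\mathcal{G},\mathcal{X},P)\longrightarrow {\rm Q}(F_{\mathcal{E}}(\mathcal{G}),X,P).
\]
By the classical theory of Quot schemes (Grothendieck), since $X$ is a projective scheme over the affine $S$, the target ${\rm Q}(F_{\mathcal{E}}(\mathcal{G}),X,P)$ is a projective scheme over $S$.

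The crux is to prove that $F_{\mathcal{E}}$ is a closed embedding; granting this, ${\rm Q}(\mathcal{G},\mathcal{X},P)$ is a closed subspace of a projective scheme and is therefore itself a projective scheme over $S$, which is the assertion. I would argue the closed embedding in two steps. To see that $F_{\mathcal{E}}$ is a monomorphism, I would exploit that $\mathcal{E}$ is a \emph{generating} sheaf: by Definition \ref{301} the adjunction $\theta_{\mathcal{E}}(\mathcal{F}):\pi^* F_{\mathcal{E}}(\mathcal{F})\otimes\mathcal{E}\to\mathcal{F}$ is surjective, so from a quotient $[F_{\mathcal{E}}(\mathcal{G}_T)\to V]$ with kernel $K$ one recovers the original quotient as $\mathcal{F}=\mathrm{coker}\big(G_{\mathcal{E}}(K)\to\mathcal{G}_T\big)$, where $G_{\mathcal{E}}(K)\to\mathcal{G}_T$ is the composite of $G_{\mathcal{E}}$ applied to $K\hookrightarrow F_{\mathcal{E}}(\mathcal{G}_T)$ with the surjection $\theta_{\mathcal{E}}(\mathcal{G}_T)$. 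This recovery is inverse to $F_{\mathcal{E}}$ on $T$-valued points for every $S$-scheme $T$, so $F_{\mathcal{E}}$ is injective, i.e.\ a monomorphism. For properness I would apply the valuative criterion: given a discrete valuation ring $R$ and an $R$-point of the target whose generic fibre lies in the image of $F_{\mathcal{E}}$, the recovery construction produces a quotient of $\mathcal{G}_R$ over $\mathcal{X}_R$; one checks that this family is flat over $R$, which yields the required unique lift. Being a proper monomorphism, $F_{\mathcal{E}}$ is then a closed immersion.

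The main obstacle is precisely this last point: upgrading the monomorphism $F_{\mathcal{E}}$ to a proper (hence closed) immersion. The delicate issue is that the recovery construction $V\mapsto\mathrm{coker}(G_{\mathcal{E}}(K)\to\mathcal{G}_T)$ must be shown to produce a \emph{$T$-flat} family with Hilbert polynomial $P$ over precisely a closed subspace of the base, and to be genuinely inverse to $F_{\mathcal{E}}$ there; here $G_{\mathcal{E}}=\pi^*(-)\otimes\mathcal{E}$ is only right exact, so flatness does not come for free and must be controlled using tameness together with the generating property. This is exactly the content of the Olsson--Starr comparison results (their Lemma 6.1 and Proposition 6.2). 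Once flatness of the recovered family and the monomorphism property are both in hand, the closed-immersion conclusion, and hence the projectivity of ${\rm Q}(\mathcal{G},\mathcal{X},P)$ over $S$, follows formally.
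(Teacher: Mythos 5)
Your argument is correct and coincides with the approach behind the paper's treatment: the paper does not reprove this statement but quotes it as Theorem 1.5 of \cite{OlSt}, and the strategy it sketches in the introduction is exactly yours --- embed ${\rm Q}(\mathcal{G},\mathcal{X},P)$ into ${\rm Q}(F_{\mathcal{E}}(\mathcal{G}),X,P)$ via a generating sheaf $\mathcal{E}$ (supplied by Theorem \ref{302}), where the monomorphism and closed-immersion steps are precisely \cite[Lemma 6.1 and Proposition 6.2]{OlSt}, and then conclude from the classical projectivity of Quot schemes over the projective coarse space $X$. Your remark that $P$ must be read as the modified Hilbert polynomial taken with respect to the generating sheaf $\mathcal{E}$ (rather than the arbitrary locally free sheaf $\mathcal{H}$ fixed elsewhere in the paper, for which the statement could fail) is also the correct reading of the quoted result.
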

This theorem implies the following general result when $S$ is an algebraic space.
\begin{prop}\label{306}
Let $S$ be a locally of finite type algebraic space over an algebraically closed field $k$, and let $\mathcal{X}$ be a projective Deligne-Mumford stack over $S$. We fix an integer polynomial $P$ and a coherent sheaf $\mathcal{G}$ over $\mathcal{X}$. Then the algebraic space ${\rm Q}(\mathcal{G},\mathcal{X},P)$ is projective over $S$.
\end{prop}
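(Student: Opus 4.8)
The plan is to reduce the statement to the affine case already settled by Theorem \ref{305} via étale descent along the base, using the definition of a property of a morphism of algebraic spaces given in \S 2.1. Recall that ${\rm Q}(\mathcal{G},\mathcal{X},P)\to S$ is \emph{projective over $S$} once we show it is representable by schemes and exhibit a single local chart $(U,u)$ of $S$ for which the base change $U\times_S{\rm Q}(\mathcal{G},\mathcal{X},P)\to U$ is projective. Accordingly, first I would choose a surjective étale morphism $u:U\to S$ where $U$ is a disjoint union $\coprod_i U_i$ of affine schemes; such a $U$ exists because $S$ is an algebraic space, and it is a legitimate scheme chart of $S$.

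Second, I would record two base-change facts. The quot functor is compatible with base change: for any $S$-scheme $T$ the canonical morphism
\begin{align*}
T\times_S{\rm Q}(\mathcal{G},\mathcal{X},P)\;\xrightarrow{\ \cong\ }\;{\rm Q}(\mathcal{G}_T,\mathcal{X}_T,P)
\end{align*}
is an isomorphism, since $\widetilde{{\rm Q}}(\mathcal{G},\mathcal{X},P)$ is a sheaf on $(\mathrm{Sch}/S)$ whose sections are quotients that pull back. Moreover $\mathcal{X}_T\to T$ is again a projective Deligne-Mumford stack in the sense of Definition \ref{303}: separatedness, local finite presentation and the global-quotient structure are stable under base change, and because $\mathcal{X}$ is tame the formation of the coarse moduli space commutes with the flat morphism $T\to S$, so $X_T=X\times_S T$ is the coarse space of $\mathcal{X}_T$ and is projective over $T$ as a base change of $X\to S$. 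Applying Theorem \ref{305} to each affine $U_i$ then gives that $U_i\times_S{\rm Q}(\mathcal{G},\mathcal{X},P)\cong{\rm Q}(\mathcal{G}_{U_i},\mathcal{X}_{U_i},P)$ is a projective $U_i$-scheme; since $U=\coprod_i U_i$, the morphism $U\times_S{\rm Q}(\mathcal{G},\mathcal{X},P)\to U$ is projective, so the chart condition holds.

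Third, I would establish representability by schemes. Given an arbitrary test scheme $T\to S$, cover $T$ by affine opens $\{T_i\}$; by the previous step each $T_i\times_S{\rm Q}(\mathcal{G},\mathcal{X},P)$ is a projective scheme over $T_i$, and on overlaps these restrictions agree because $\widetilde{{\rm Q}}(\mathcal{G},\mathcal{X},P)$ is a Zariski sheaf. Gluing schemes along a Zariski cover yields a scheme, and it represents the same functor as $T\times_S{\rm Q}(\mathcal{G},\mathcal{X},P)$, hence the latter is a scheme. Thus ${\rm Q}(\mathcal{G},\mathcal{X},P)\to S$ is representable by schemes, and together with the chart condition it is projective over $S$ by definition.

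The hard part will not be the formal descent but the input that makes Theorem \ref{305} applicable fiberwise: verifying that the hypothesis ``projective Deligne-Mumford stack'' is genuinely preserved under étale base change. Concretely, the crux is that coarse-moduli-space formation commutes with the base change $U_i\to S$ (which uses tameness in an essential way) and that the global-quotient presentation, hence the existence of a generating sheaf via Theorem \ref{302}, survives base change, so that each $\mathcal{X}_{U_i}\to U_i$ really falls under Definition \ref{303}. Once this stability is in place, the reduction to Theorem \ref{305} over affines and the Zariski gluing are routine, and the conclusion follows.
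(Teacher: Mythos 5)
Your proposal is correct and follows essentially the same route as the paper's own proof: reduce to an affine \'etale chart of $S$, note that the base change $\mathcal{X}_U \rightarrow U$ is still a projective Deligne--Mumford stack with projective coarse space $X_U$, and invoke Theorem \ref{305}. The only difference is that you spell out the details the paper leaves implicit (base-change compatibility of the quot functor, preservation of the hypotheses of Definition \ref{303} under \'etale base change, and representability by schemes via Zariski gluing), which strengthens rather than changes the argument.
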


\begin{proof}
To prove that the algebraic space ${\rm Q}_S(\mathcal{G},\mathcal{X},P)$ is projective over $S$, it is enough to work on an affine scheme $U$ such that $u: U \rightarrow S$ is an \'etale morphism, and prove that ${\rm Q}_U(\mathcal{G}_U,\mathcal{X}_U,P)$ is projective over $U$. Since $X \rightarrow S$ is a projective morphism of algebraic spaces, $X_U$ is also projective over $U$. By Theorem \ref{305}, ${\rm Q}_U(\mathcal{G}_U,\mathcal{X}_U,P)$ is a $U$-projective scheme. This finishes the proof of this proposition.
\end{proof}

Next, we will show that the quot-functor $\widetilde{{\rm Q}}(\mathcal{G},\mathcal{X},P)$ has a universal family. In the classical discussion for moduli problems, if a moduli problem
\begin{align*}
\widetilde{\mathcal{M}}: ({\rm Sch}/S) \rightarrow {\rm Set}
\end{align*}
is representable by $M$ (as schemes), then a universal family exists and corresponds to the identity in $\widetilde{\mathcal{M}}(M)={\rm Hom}(M,M)$. However, in our case, the moduli problem $\widetilde{{\rm Q}}(\mathcal{G},\mathcal{X},P)$ is represented by an algebraic space ${\rm Q}(\mathcal{G},\mathcal{X},P)$, which is not a scheme generally. Therefore, $\widetilde{{\rm Q}}(\mathcal{G},\mathcal{X},P)({\rm Q}(\mathcal{G},\mathcal{X},P))$ does not make sense by definition. Therefore, we have to construct a ``universal family" corresponding to the identity morphism in ${\rm Hom}({\rm Q}(\mathcal{G},\mathcal{X},P),{\rm Q}(\mathcal{G},\mathcal{X},P))$.

\begin{thm}\label{307}
There exists a universal family over $X \times_S {\rm Q}(\mathcal{G},\mathcal{X},P)$.
\end{thm}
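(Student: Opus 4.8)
The plan is to build the family by descent from an \'etale atlas, since the obstruction flagged in the text is precisely that ${\rm Q}:={\rm Q}(\mathcal{G},\mathcal{X},P)$ is an algebraic space while $\widetilde{{\rm Q}}(\mathcal{G},\mathcal{X},P)$ is only defined on schemes, so we cannot extract the family from an identity morphism. First I would fix a surjective \'etale morphism $u: U \rightarrow {\rm Q}$ with $U$ a scheme. Because ${\rm Q}$ represents $\widetilde{{\rm Q}}(\mathcal{G},\mathcal{X},P)$ on $S$-schemes, the Yoneda identification ${\rm Hom}_S(U,{\rm Q}) \cong \widetilde{{\rm Q}}(\mathcal{G},\mathcal{X},P)(U)$ sends the atlas $u$ itself to a quotient $[\mathcal{G}_U \rightarrow \mathcal{F}_U]$ over $\mathcal{X}_U = \mathcal{X} \times_S U$, flat over $U$ with modified Hilbert polynomial $P$. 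This is the candidate family, living a priori only over the chart.

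Next I would produce the descent datum. By representability of the diagonal of ${\rm Q}$, the product $R:=U \times_{\rm Q} U$ is a scheme and the two projections $p_1,p_2: R \rightarrow U$ are \'etale. Since $u \circ p_1 = u \circ p_2$ as morphisms $R \rightarrow {\rm Q}$, they determine one and the same element of $\widetilde{{\rm Q}}(\mathcal{G},\mathcal{X},P)(R)$; unwinding this equality yields a canonical isomorphism of quotients $\theta: p_1^*[\mathcal{G}_U \rightarrow \mathcal{F}_U] \xrightarrow{\sim} p_2^*[\mathcal{G}_U \rightarrow \mathcal{F}_U]$ over $\mathcal{X}_R$. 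The cocycle condition for $\theta$ on the triple product $U \times_{\rm Q} U \times_{\rm Q} U$ follows from the uniqueness of these identifications, equivalently from the fact that $\widetilde{{\rm Q}}(\mathcal{G},\mathcal{X},P)$ is a sheaf on the big \'etale site.

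Then I would invoke \'etale descent for coherent sheaves along $\mathcal{X}_U \rightarrow \mathcal{X} \times_S {\rm Q}$: the pair $(\mathcal{F}_U,\theta)$ glues to a coherent sheaf $\mathcal{F}$ on $\mathcal{X} \times_S {\rm Q}$, flat over ${\rm Q}$ with modified Hilbert polynomial $P$, and the quotient maps glue to $q: \mathcal{G}_{\rm Q} \rightarrow \mathcal{F}$ (note that $\mathcal{G}_{\rm Q}$ already descends, being pulled back from $\mathcal{X}$). Applying $F_{\mathcal{H}}$ (equivalently $\pi_*$) then transports this to a family literally over $X \times_S {\rm Q}$, as in the statement. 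Finally I would check universality: for any $S$-scheme $T$ and any $f \in {\rm Hom}_S(T,{\rm Q}) = \widetilde{{\rm Q}}(\mathcal{G},\mathcal{X},P)(T)$, the pullback $(1 \times f)^*[\mathcal{G}_{\rm Q} \rightarrow \mathcal{F}]$ coincides with the quotient classified by $f$; this is verified \'etale-locally on $T$, where $f$ lifts through the atlas $u$ and the comparison is exactly the datum built into $(\mathcal{F}_U,\theta)$.

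The main obstacle is the middle step, namely producing $\theta$ and verifying its cocycle condition, since it is here that the sheaf property of the quot-functor must replace the unavailable Yoneda ``identity family'' trick over an algebraic space; this is also the conceptual heart of Remark \ref{207}, that one may reduce to the chart $U$. Everything else, the representability of the diagonal, \'etale descent for coherent sheaves on algebraic spaces, and the preservation of flatness and of the Hilbert polynomial under pullback, is standard and can be quoted.
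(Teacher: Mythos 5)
Your proposal follows essentially the same route as the paper's own proof: obtain the family $[\mathcal{G}_U \to \mathcal{F}_U]$ on an \'etale atlas $U \to {\rm Q}(\mathcal{G},\mathcal{X},P)$ via Yoneda, use the equality of the two compositions $U \times_{\rm Q} U \rightrightarrows U \to {\rm Q}$ to produce the canonical descent isomorphism, glue to a sheaf over ${\rm Q}$, and verify universality by pulling an arbitrary classifying map $T \to {\rm Q}$ back along the atlas and invoking the sheaf property of $\widetilde{{\rm Q}}(\mathcal{G},\mathcal{X},P)$. If anything, your write-up is slightly more careful than the paper's, since you make the cocycle condition explicit and you reconcile the statement's $X \times_S {\rm Q}$ with the construction (which naturally lives on $\mathcal{X} \times_S {\rm Q}$) by applying $F_{\mathcal{H}}$, a point the paper leaves implicit.
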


\begin{proof}
We use the notation $Q:={\rm Q}(\mathcal{G},\mathcal{X},P)$ for the algebraic space and $\widetilde{Q}:=\widetilde{{\rm Q}}(\mathcal{G},\mathcal{X},P)$ for the functor. Let $q: Q_0 \rightarrow Q$ be an \'etale surjective morphism, where $Q_0$ is a scheme. Let $Q_1:=Q_0 \times_{Q} Q_0$, and denote by $s,t: Q_1 \rightarrow Q_0$ the source and target map respectively. Clearly, $Q \cong Q_0 / Q_1$, which is induced by the morphism $q: Q_0 \rightarrow Q$. Denote by $\mathcal{Q}_0$ the element in $\widetilde{Q}(Q_0)$ corresponding to the morphism $q: Q_0 \rightarrow {\rm Q}(\mathcal{G},\mathcal{X})$, and $\mathcal{Q}_0$ is a coherent sheaf on $Q_0 \times_S \mathcal{X}$ by definition. Since $s \circ q=t \circ q$, there is a canonical isomorphism $\sigma_{can}: s^* \mathcal{Q}_0 \cong t^* \mathcal{Q}_0$. This gives us a coherent sheaf $\mathcal{Q}$ on $Q(\mathcal{G},\mathcal{X}) \times_S \mathcal{X}$. Note that the construction of the coherent sheaf $\mathcal{Q}$ is independent of the choice of the surjective \'etale morphism $q: Q_0 \rightarrow Q$. We will prove that $\mathcal{Q}$ is the universal family. For simplicity, we say that $\mathcal{Q}$ is a family of coherent sheaves on $Q$, and the same for the other families.

To prove that $\mathcal{Q}$ is the universal family, it is equivalent to prove that for any family of coherent sheaves $\mathcal{Q}_T$ on any $S$-scheme $T$, there is a unique morphism $\mathcal{Q}_T \rightarrow \mathcal{Q}$. Equivalently, $\mathcal{Q}_T$ is the pullback of the family $\mathcal{Q}$.
\begin{center}
\begin{tikzcd}
    \mathcal{Q}_T \arrow[r] \arrow[d] & \mathcal{Q} \arrow[d] \\
    T \arrow[r]   & Q
\end{tikzcd}
\end{center}
Let $T$ be an $S$-scheme. Let $p: T \rightarrow Q$ be a morphism, which corresponds to a family of coherent sheaves $\mathcal{Q}_T$ on $T$ by the representability of $Q$. We consider the following diagram
\begin{center}
\begin{tikzcd}
    T_{Q_1} \arrow[r,shift right=0.5ex] \arrow[r,shift left=0.5ex] \arrow[d] & T_{Q_0} \arrow[d] \arrow[r] & T \arrow[d,"p"]\\
    Q_1 \arrow[r,shift right=0.5ex] \arrow[r,shift left=0.5ex]  & Q_0 \arrow[r,"q"] & Q
\end{tikzcd}
\end{center}
where $T_{Q_0}= T \times_Q Q_0$, $T_{Q_1}= T \times_Q Q_1$, and each square is cartesian. Since $\widetilde{Q}$ is a sheaf (see \cite[\S 2]{OlSt}), we have an exact sequence
\begin{align*}
0 \rightarrow \widetilde{Q}(T) \rightarrow \widetilde{Q}(T_{Q_0}) \rightrightarrows \widetilde{Q}(T_{Q_1}).
\end{align*}
Note that the element $\mathcal{Q}_0 \in \widetilde{Q}(Q_0)$ will be mapped to a trivial element in $\widetilde{Q}(T_{Q_1})$ via the morphism
\begin{align*}
\widetilde{Q}(Q_0) \rightrightarrows  \widetilde{Q}(Q_1) \rightarrow \widetilde{Q}(T_{Q_1}).
\end{align*}
By the universal property of the injective morphism,
\begin{center}
\begin{tikzcd}
    0 \arrow[r] & \widetilde{Q}(T) \arrow[r] & \widetilde{Q}(T_{Q_0}) \arrow[r,shift right=0.5ex] \arrow[r,shift left=0.5ex] & \widetilde{Q}(T_{Q_1})\\
    & & \widetilde{Q}(Q_0) \arrow[u] \arrow[ul, dotted, "\exists !" description] &
\end{tikzcd}
\end{center}
$\mathcal{Q}_0$ will be mapped to a unique element in $\widetilde{Q}(T)$. This element is exactly $\mathcal{Q}_T$ (corresponding to the map $p: T \rightarrow Q$), which is determined by the fiber product
\begin{center}
\begin{tikzcd}
    T_{Q_0} \arrow[d] \arrow[r] & T \arrow[d,"p"]\\
    Q_0 \arrow[r,"q"] & Q
\end{tikzcd}
\end{center}
Therefore, we have a unique morphism $\mathcal{Q}_T \rightarrow \mathcal{Q}_0$. Since $\mathcal{Q}_0 \in \widetilde{Q}(Q_0)$ maps to the trivial element in $\widetilde{Q}(Q_1)$, we induce a morphism $\mathcal{Q}_T \rightarrow \mathcal{Q}$. This finishes the proof that $\mathcal{Q}$ is a universal family.
\end{proof}
Denote by $\mathcal{Q}$ the universal family over ${\rm Q}(\mathcal{G},\mathcal{X},P) \times_S \mathcal{X}$.

With the same setup as in Proposition \ref{306}, denote by $X$ the coarse moduli space of $\mathcal{X}$, which is an algebraic space. As an application of the proposition, the algebraic space ${\rm Q}(G,X,P)$ is projective over $S$, where $G$ is a coherent sheaf on $X$. Let $\mathcal{E}$ be a generating sheaf of $\mathcal{X}$. The functor $F_{\mathcal{E}}$ induces a closed immersion ${\rm Q}(\mathcal{G},\mathcal{X},P) \hookrightarrow {\rm Q}(F_{\mathcal{E}}(\mathcal{G}),X,P)$. The proof of this result is the same as \cite[\S 6]{OlSt}. Now let $\mathcal{H}$ be a locally free sheaf on a tame Deligne-Mumford stack $\mathcal{X}$. In this case, the morphism
\begin{align*}
\theta_{\mathcal{H}}(\mathcal{F}): \pi^* \pi_* \mathcal{H}om(\mathcal{H},\mathcal{F}) \otimes \mathcal{H} \rightarrow \mathcal{F}
\end{align*}
is not necessarily to be surjective.

We consider the following moduli problem
\begin{align*}
\widetilde{{\rm Q}}^{\mathcal{H}}(\mathcal{G},\mathcal{X},P): (\text{Sch}/S)^{{\rm op}} \rightarrow \text{Set}.
\end{align*}
For each $S$-scheme $T$, $\widetilde{{\rm Q}}^{\mathcal{H}}(\mathcal{G},\mathcal{X},P)(T) \subseteq \widetilde{{\rm Q}}(\mathcal{G},\mathcal{X},P)(T)$ consists of all quotients $[q: \mathcal{G}_T \rightarrow \mathcal{F}]$ such that the morphism $\theta_{\mathcal{H}}({\rm ker}(q))$ is surjective. Clearly, if $\mathcal{X}$ is a projective Deligne-Mumford stack and $\mathcal{H}$ is a generating sheaf, then the moduli problem $\widetilde{{\rm Q}}^{\mathcal{H}}(\mathcal{G},\mathcal{X},P)$ is exactly $\widetilde{{\rm Q}}(\mathcal{G},\mathcal{X},P)$.

\begin{prop}\label{308}
The moduli problem $\widetilde{{\rm Q}}^{\mathcal{H}}(\mathcal{G},\mathcal{X},P)$ is represented by an algebraic space ${\rm Q}^{\mathcal{H}}(\mathcal{G},\mathcal{X},P)$, which is an open subset of ${\rm Q}(\mathcal{G},\mathcal{X},P)$.
\end{prop}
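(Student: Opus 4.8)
The plan is to cut out ${\rm Q}^{\mathcal{H}}(\mathcal{G},\mathcal{X},P)$ as the complement of the image of the support of a suitable cokernel, using the universal family together with the tameness of $\mathcal{X}$ to ensure that this construction commutes with base change. Write $Q:={\rm Q}(\mathcal{G},\mathcal{X},P)$ and let $\mathcal{Q}$ be the universal family on $\mathcal{X}_Q$ produced by Theorem \ref{307}, fitting into the universal exact sequence $0 \to \mathcal{K} \to \mathcal{G}_Q \to \mathcal{Q} \to 0$, where $\mathcal{K}$ is the universal kernel. Since $\mathcal{Q}$ is flat over $Q$, the formation of $\mathcal{K}$ commutes with arbitrary base change $g: \mathcal{X}_T \to \mathcal{X}_Q$ induced by a morphism $T \to Q$, so that $g^*\mathcal{K} = \ker(\mathcal{G}_T \to \mathcal{Q}_T)$.

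First I would form the adjunction morphism $\theta_{\mathcal{H}}(\mathcal{K}): \pi^*\pi_*\mathcal{H}om(\mathcal{H},\mathcal{K}) \otimes \mathcal{H} \to \mathcal{K}$ on $\mathcal{X}_Q$ and let $\mathcal{C}$ be its cokernel, a coherent sheaf on $\mathcal{X}_Q$. The crucial point is that the whole construction of $\mathcal{C}$ commutes with the pullback $g^*$. The functor $G_{\mathcal{H}}=\pi^*(-)\otimes\mathcal{H}$ visibly commutes with pullback; for $F_{\mathcal{H}}=\pi_*(-\otimes\mathcal{H}^{\vee})$ this is exactly where I use that $\mathcal{X}$ is tame: for a tame stack $\pi_*$ is exact and commutes with arbitrary base change on the coarse space (and the coarse space of $\mathcal{X}_T$ is $X_T=X\times_S T$), so $g^*F_{\mathcal{H}}(\mathcal{K})=F_{\mathcal{H}}(g^*\mathcal{K})$. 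Combining this with $g^*\mathcal{K}=\ker(\mathcal{G}_T\to\mathcal{Q}_T)$, I obtain $g^*\mathcal{C}=\operatorname{coker}(\theta_{\mathcal{H}}(\ker(\mathcal{G}_T\to\mathcal{Q}_T)))$; in particular, over a point $t\in Q$ the fibre $\mathcal{C}_t$ is the cokernel of the fibrewise $\theta_{\mathcal{H}}$.

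Next I would define ${\rm Q}^{\mathcal{H}}(\mathcal{G},\mathcal{X},P):=Q\setminus \mathrm{pr}(\operatorname{Supp}\mathcal{C})$, where $\mathrm{pr}:\mathcal{X}_Q\to Q$ is the projection. Because $\mathcal{X}$ is a projective Deligne-Mumford stack, $\mathcal{X}\to S$ is proper, hence so is $\mathrm{pr}$; thus the closed set $\operatorname{Supp}\mathcal{C}$ has closed image and ${\rm Q}^{\mathcal{H}}(\mathcal{G},\mathcal{X},P)$ is open. By the base-change identity above, $t\in {\rm Q}^{\mathcal{H}}(\mathcal{G},\mathcal{X},P)$ if and only if $\mathcal{C}_t=0$, i.e. if and only if the fibrewise $\theta_{\mathcal{H}}$ is surjective. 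To conclude that this open subspace represents $\widetilde{{\rm Q}}^{\mathcal{H}}(\mathcal{G},\mathcal{X},P)$, I would check that a morphism $T\to Q$, corresponding to a family $[\mathcal{G}_T\to\mathcal{F}]$, lies in the subfunctor—meaning $\theta_{\mathcal{H}}(\ker)$ is surjective, equivalently $g^*\mathcal{C}=0$—precisely when $g(\mathcal{X}_T)$ misses $\operatorname{Supp}\mathcal{C}$, i.e. when the image of $T$ avoids $\mathrm{pr}(\operatorname{Supp}\mathcal{C})$, i.e. when $T\to Q$ factors through the open subspace; the last equivalence is the standard fact that a morphism factors through an open immersion iff its set-theoretic image does.

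I expect the main obstacle to be justifying that the cokernel $\mathcal{C}$ commutes with base change: this is the hinge on which both openness and representability rest, and it relies essentially on the tameness of $\mathcal{X}$, which makes $\pi_*$ exact and compatible with arbitrary base change so that $F_{\mathcal{H}}$ behaves well in families. By contrast, the properness of $\mathrm{pr}$ needed to turn $\operatorname{Supp}\mathcal{C}$ into a closed subset of $Q$ is comparatively routine once one invokes that $\mathcal{X}$ is projective, hence proper, over $S$.
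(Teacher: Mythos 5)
Your proposal is correct and follows essentially the same route as the paper's proof, which is likewise modeled on \cite[Lemma 2.6]{Lieb}: form a coherent sheaf on the universal family measuring failure of the defining condition, observe that the image of its support under the proper projection to ${\rm Q}(\mathcal{G},\mathcal{X},P)$ is closed, and take the complement. In fact your write-up is more careful than the paper's at two points: the paper works with the kernel of $\theta_{\mathcal{H}}(\mathcal{Q})$ applied to the universal \emph{quotient}, whereas the definition of $\widetilde{{\rm Q}}^{\mathcal{H}}(\mathcal{G},\mathcal{X},P)$ calls for the cokernel of $\theta_{\mathcal{H}}$ applied to the universal \emph{kernel} (which is what you use), and the paper silently omits the base-change compatibility of this construction --- flatness of $\mathcal{Q}$ over the quot space plus tameness making $\pi_*$ commute with base change --- which you correctly identify as the hinge on which representability of the subfunctor by the open subspace rests.
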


\begin{proof}
The proof of this proposition is inspired by \cite[Lemma 2.6]{Lieb}. By Proposition \ref{307}, the universal family $\mathcal{Q}$ on ${\rm Q}(\mathcal{G},\mathcal{X},P) \times_S \mathcal{X}$ corresponds to the identity morphism in ${\rm Hom}({\rm Q}(\mathcal{G},\mathcal{X},P),{\rm Q}(\mathcal{G},\mathcal{X},P))$. We have the following morphism of $\mathcal{Q}$
\begin{align*}
\theta_{\mathcal{H}}(\mathcal{Q}): \pi^* \pi_* \mathcal{H}om(\mathcal{H},\mathcal{Q}) \otimes \mathcal{H} \rightarrow \mathcal{Q}.
\end{align*}
Denote by $\mathcal{K}$ the kernel of the morphism $\theta_{\mathcal{H}}(\mathcal{Q})$. Proving the proposition is equivalent to prove that the locus of points $s \in {\rm Q}(\mathcal{G},\mathcal{X},P)$ such that $\mathcal{K}_{s}=0$ is open. Note that the locus of points $s \in {\rm Q}(\mathcal{G},\mathcal{X},P)$ such that $\mathcal{K}_{s}=0$ is exactly the complement of the support of $\mathcal{K}$. We also know that the support of $\mathcal{K}$ is proper over ${\rm Q}(\mathcal{G},\mathcal{X},P)$, which means that the support is closed. Therefore, the locus of points $s \in {\rm Q}(\mathcal{G},\mathcal{X},P)$ such that $\mathcal{K}_{s}=0$ is open. This finishes the proof of this proposition.
\end{proof}

Now we consider another important property of $\widetilde{{\rm Q}}^{\mathcal{H}}(\mathcal{G},\mathcal{X},P)$. The functor $F_{\mathcal{H}}: {\rm Qcoh}(\mathcal{X}) \rightarrow {\rm Qcoh}(X)$ induces a natural transformation
\begin{align*}
F_{\mathcal{H}}: \widetilde{{\rm Q}}(\mathcal{G},\mathcal{X},P) \rightarrow \widetilde{{\rm Q}}(F_{\mathcal{H}}(\mathcal{G}),\mathcal{X},P).
\end{align*}
This morphism is not a monomorphism in general, and it is a monomorphism when $\mathcal{H}$ is a generating sheaf \cite[Lemma 6.1]{OlSt}. Furthermore, we can prove that when restricting to $\widetilde{{\rm Q}}^{\mathcal{H}}(\mathcal{G},\mathcal{X},P)$, the natural transformation $F_{\mathcal{H}}$ is a monomorphism.

\begin{lem}\label{309}
The functor $F_{\mathcal{H}}$ induces a monomorphism
\begin{align*}
F_{\mathcal{H}}: \widetilde{{\rm Q}}^{\mathcal{H}}(\mathcal{G},\mathcal{X},P) \rightarrow \widetilde{{\rm Q}}(F_{\mathcal{H}}(\mathcal{G}),\mathcal{X},P).
\end{align*}
In other words, for each $S$-scheme $T$, the map
\begin{align*}
F_{\mathcal{H}}(T): \widetilde{{\rm Q}}^{\mathcal{H}}(\mathcal{G},\mathcal{X},P)(T) \rightarrow \widetilde{{\rm Q}}(F_{\mathcal{H}}(\mathcal{G}),\mathcal{X},P)(T)
\end{align*}
is injective.
\end{lem}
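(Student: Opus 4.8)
The plan is to prove the stronger statement that $F_{\mathcal{H}}$ is a monomorphism of functors, i.e. that for every $S$-scheme $T$ the map on $T$-points is injective. So I would fix two quotients $[q_1:\mathcal{G}_T \to \mathcal{F}_1]$ and $[q_2:\mathcal{G}_T \to \mathcal{F}_2]$ in $\widetilde{{\rm Q}}^{\mathcal{H}}(\mathcal{G},\mathcal{X},P)(T)$ and assume that $F_{\mathcal{H}}(q_1)$ and $F_{\mathcal{H}}(q_2)$ agree as quotients of $F_{\mathcal{H}}(\mathcal{G}_T)$; the goal is to deduce $q_1 = q_2$. Setting $\mathcal{K}_i = {\rm ker}(q_i) \subseteq \mathcal{G}_T$, a quotient of $\mathcal{G}_T$ is determined by its kernel subsheaf, so it suffices to prove the equality of subsheaves $\mathcal{K}_1 = \mathcal{K}_2$ inside $\mathcal{G}_T$.

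The first step is to exploit exactness. Since $\mathcal{X}$ is tame and $\mathcal{H}$ is locally free, $\pi_*$ is exact and compatible with the base change to $T$, so $F_{\mathcal{H}} = \pi_*(-\otimes\mathcal{H}^{\vee})$ is exact on $\mathcal{X}_T$ and carries the inclusion $\mathcal{K}_i \hookrightarrow \mathcal{G}_T$ to the inclusion $F_{\mathcal{H}}(\mathcal{K}_i) \hookrightarrow F_{\mathcal{H}}(\mathcal{G}_T)$, with $F_{\mathcal{H}}(\mathcal{K}_i) = {\rm ker}\, F_{\mathcal{H}}(q_i)$. Hence the hypothesis that $F_{\mathcal{H}}(q_1)$ and $F_{\mathcal{H}}(q_2)$ coincide translates into the equality of subsheaves $F_{\mathcal{H}}(\mathcal{K}_1) = F_{\mathcal{H}}(\mathcal{K}_2)$ inside $F_{\mathcal{H}}(\mathcal{G}_T)$.

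The key step is to reconstruct each $\mathcal{K}_i$ from this common subsheaf using the counit $\theta_{\mathcal{H}}$ together with the defining condition of $\widetilde{{\rm Q}}^{\mathcal{H}}$. Writing $j_i:\mathcal{K}_i \hookrightarrow \mathcal{G}_T$ for the inclusion and $\iota_i := F_{\mathcal{H}}(j_i)$, naturality of the adjunction morphism applied to $j_i$ yields the commutative relation
\[
\theta_{\mathcal{H}}(\mathcal{G}_T) \circ G_{\mathcal{H}}(\iota_i) = j_i \circ \theta_{\mathcal{H}}(\mathcal{K}_i).
\]
Now I take images in $\mathcal{G}_T$. On the right-hand side $\theta_{\mathcal{H}}(\mathcal{K}_i)$ is surjective, by the very definition of $\widetilde{{\rm Q}}^{\mathcal{H}}(\mathcal{G},\mathcal{X},P)(T)$, and $j_i$ is a monomorphism, so the image is precisely the subsheaf $\mathcal{K}_i \subseteq \mathcal{G}_T$. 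On the left-hand side the image depends only on $G_{\mathcal{H}}(\iota_i)$ and $\theta_{\mathcal{H}}(\mathcal{G}_T)$, hence only on the subsheaf $F_{\mathcal{H}}(\mathcal{K}_i) \subseteq F_{\mathcal{H}}(\mathcal{G}_T)$. Since $F_{\mathcal{H}}(\mathcal{K}_1) = F_{\mathcal{H}}(\mathcal{K}_2)$ by the previous step, i.e. $\iota_1 = \iota_2$, the two left-hand images coincide, giving $\mathcal{K}_1 = \mathcal{K}_2$ and therefore $q_1 = q_2$.

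The main obstacle I expect is the bookkeeping that makes the ``image'' argument precise over a general base $T$: one must check that $G_{\mathcal{H}} = \pi^*(-)\otimes\mathcal{H}$ sends the monomorphism $\iota_i$ to a map whose image is $G_{\mathcal{H}}$ of the common subsheaf (using right-exactness of $\pi^*$ together with exactness of $-\otimes\mathcal{H}$, as $\mathcal{H}$ is locally free), and that $\pi_*$ and the counit $\theta_{\mathcal{H}}$ remain compatible after base change along $\mathcal{X}_T \to X_T$, which rests on the exactness and base-change properties of $\pi_*$ for the tame stack $\mathcal{X}$. Once this compatibility is in place, the argument is the natural extension of \cite[Lemma 6.1]{OlSt}: there $\mathcal{H}$ is a generating sheaf, so $\theta_{\mathcal{H}}({\rm ker}\,q)$ is automatically surjective, whereas here that surjectivity is exactly the condition cut out by restricting to $\widetilde{{\rm Q}}^{\mathcal{H}}(\mathcal{G},\mathcal{X},P)$.
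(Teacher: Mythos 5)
Your proof is correct and is essentially the paper's argument: both rest on recovering the kernel $\mathcal{K}_i = \ker(q_i)$ as the image of $\theta_{\mathcal{H}}(\mathcal{G}_T)\circ G_{\mathcal{H}}(\iota_i)$ in $\mathcal{G}_T$, using the naturality square for the counit $\theta_{\mathcal{H}}$ together with the surjectivity of $\theta_{\mathcal{H}}(\ker q_i)$ that defines $\widetilde{{\rm Q}}^{\mathcal{H}}(\mathcal{G},\mathcal{X},P)$, plus exactness of $F_{\mathcal{H}}$. The paper packages the identical computation as an explicit left inverse $\eta$ (sending a quotient of $F_{\mathcal{H}}(\mathcal{G})$ with kernel $K$ to the cokernel of $G_{\mathcal{H}}(K)\rightarrow\mathcal{G}$) and verifies $\eta\circ F_{\mathcal{H}}=\mathrm{id}$, whereas you compare the two kernels directly; this difference is purely presentational.
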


\begin{proof}
We first construct a map
\begin{align*}
\eta: \widetilde{{\rm Q}}(F_{\mathcal{H}}(\mathcal{G}),\mathcal{X},P) \rightarrow \widetilde{{\rm Q}}^{\mathcal{H}}(\mathcal{G},\mathcal{X},P).
\end{align*}
Then, we prove that the composition $F_{\mathcal{H}} \circ \nu$ is the identity. To simplify the proof, we only prove that the map $F_{\mathcal{H}}:=F_{\mathcal{H}}(S)$ is injective.

Let $\alpha: F_{\mathcal{H}}(\mathcal{G}) \rightarrow F$ be an element in $\widetilde{{\rm Q}}(F_{\mathcal{H}}(\mathcal{G}),\mathcal{X},P)$. Denote by $\beta: K \rightarrow F_{\mathcal{H}}(\mathcal{G})$ the kernel of $\alpha$. We define $\eta(\alpha)$ to be the cokernel of the following composition
\begin{align*}
\pi^*K \otimes \mathcal{H} \xrightarrow{\pi^* \beta \otimes 1} G_{\mathcal{H}}(F_{\mathcal{H}}( \mathcal{G})) \xrightarrow{\theta_{\mathcal{H}}(\mathcal{G})} \mathcal{G}.
\end{align*}

Now let $\gamma: \mathcal{G} \rightarrow \mathcal{F}$ be an element in $\widetilde{{\rm Q}}^{\mathcal{H}}(\mathcal{G},\mathcal{X},P)$. Denote by $\delta: \mathcal{K}\rightarrow \mathcal{G}$ the kernel of $\gamma$. By the exactness of the functor $F_{\mathcal{H}}$, we have a short exact sequence
\begin{align*}
0 \rightarrow F_{\mathcal{H}}(\mathcal{K}) \rightarrow F_{\mathcal{H}}(\mathcal{G}) \rightarrow F_{\mathcal{H}}(\mathcal{F}) \rightarrow 0.
\end{align*}
Let $\alpha=F_{\mathcal{H}}(\gamma)$ and $\beta=F_{\mathcal{H}}(\delta)$. Since $\pi^*$ is right exact, we have the following commutative diagram
\begin{center}
\begin{tikzcd}
    & G_{\mathcal{H}}(F_{\mathcal{H}}(\mathcal{K})) \arrow[r,"\pi^* \beta \otimes 1"]  \arrow[d,"\theta_{\mathcal{H}}(\mathcal{K})"] & G_{\mathcal{H}}(F_{\mathcal{H}}(\mathcal{G})) \arrow[d,"\theta_{\mathcal{H}}(\mathcal{G})"] \arrow[r,"\pi^*\alpha \otimes 1"] & G_{\mathcal{H}}(F_{\mathcal{H}}(\mathcal{F})) \arrow[d,"\theta_{\mathcal{H}}(\mathcal{F})"] \arrow[r] & 0 \\
    0 \arrow[r] & \mathcal{K} \arrow[r,"\delta"]  & \mathcal{G} \arrow[r,"\gamma"] & \mathcal{F} \arrow[r] & 0 .
\end{tikzcd}
\end{center}
Since $\theta_{\mathcal{H}}(\mathcal{G}) \circ \pi^*\beta \otimes 1 = \delta \circ \theta_{\mathcal{H}}(\mathcal{K})$ and $\theta_{\mathcal{H}}(\mathcal{K})$ is surjective, the image of $\theta_{\mathcal{H}}(\mathcal{G}) \circ \pi^*\beta \otimes 1$ is $\delta(\mathcal{K})$. Therefore, $\eta(F_{\mathcal{H}}(\gamma))=\gamma$. This finishes the proof of this lemma.
\end{proof}
The proof of this lemma is similar to that of \cite[Lemma 6.1]{OlSt}. In \cite[Lemma 6.1]{OlSt}, the morphisms $\theta_{\mathcal{H}}(\mathcal{K})$, $\theta_{\mathcal{H}}(\mathcal{G})$ and $\theta_{\mathcal{H}}(\mathcal{F})$ are all surjective, while we only have the first morphism $\theta_{\mathcal{H}}(\mathcal{K})$ to be surjective in Lemma \ref{309}. This surjection comes from the definition of $\widetilde{\rm Q}^{\mathcal{H}}(\mathcal{G},\mathcal{X},P)$.

\begin{cor}\label{310}
The monomorphism $F_{\mathcal{H}}: {\rm Q}^{\mathcal{H}}(\mathcal{G},\mathcal{X},P) \rightarrow {\rm Q}(F_{\mathcal{H}}(\mathcal{G}),X,P)$ is a finitely-presented closed immersion.
\end{cor}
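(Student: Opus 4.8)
The plan is to verify the three constituent properties separately: that $F_{\mathcal{H}}$ is a monomorphism, that it is of finite presentation, and that it is a closed immersion. The first is already in hand from Lemma~\ref{309}, which shows that $F_{\mathcal{H}}(T)$ is injective for every $S$-scheme $T$, so that $F_{\mathcal{H}}$ restricted to ${\rm Q}^{\mathcal{H}}(\mathcal{G},\mathcal{X},P)$ is representable by monomorphisms. For finite presentation I would observe that the functor $F_{\mathcal{H}}=\pi_*\mathcal{H}om(\mathcal{H},-)$ commutes with base change (using that $\mathcal{X}$ is tame, so $\pi_*$ is exact and compatible with the flat base changes defining the Quot functors, and that $\mathcal{H}$ is locally free), so that it genuinely defines a morphism of the representing algebraic spaces. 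Since both ${\rm Q}^{\mathcal{H}}(\mathcal{G},\mathcal{X},P)$ and ${\rm Q}(F_{\mathcal{H}}(\mathcal{G}),X,P)$ are of finite presentation over $S$ (Theorem~\ref{304} and Proposition~\ref{308}), and ${\rm Q}(F_{\mathcal{H}}(\mathcal{G}),X,P)\to S$ is in particular of finite type, the cancellation property for morphisms locally of finite presentation forces $F_{\mathcal{H}}$ itself to be of finite presentation.

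For the closed immersion, the reduction I would make first is \'etale-local on $S$: being a closed immersion is \'etale-local on the target, so choosing an \'etale cover of $S$ by affines $U$ reduces everything to schemes, with ${\rm Q}(F_{\mathcal{H}}(\mathcal{G}),X_U,P)$ a projective $U$-scheme by Theorem~\ref{305}. The cleanest route to closedness is to exhibit the image of ${\rm Q}^{\mathcal{H}}$ as an equalizer: the retraction $\eta$ built in the proof of Lemma~\ref{309} satisfies $\eta\circ F_{\mathcal{H}}={\rm id}$ on ${\rm Q}^{\mathcal{H}}$, so $F_{\mathcal{H}}({\rm Q}^{\mathcal{H}})$ should coincide with the locus $\{\,y : F_{\mathcal{H}}(\eta(y))=y\,\}$, which is closed because the target is separated over $S$; on this locus $\eta$ provides a two-sided inverse to $F_{\mathcal{H}}$, giving an isomorphism onto a closed subspace. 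Combined with finite presentation, this would yield a finitely-presented closed immersion.

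Alternatively, and more robustly, I would prove that $F_{\mathcal{H}}$ is proper and invoke the standard fact that a proper monomorphism is a closed immersion. Here the morphism $F_{\mathcal{H}}:{\rm Q}(\mathcal{G},\mathcal{X},P)\to{\rm Q}(F_{\mathcal{H}}(\mathcal{G}),X,P)$ on the full Quot space is proper, since its source is projective over $S$ (Proposition~\ref{306}) and its target is separated over $S$; the content is then to control what happens on the open subspace ${\rm Q}^{\mathcal{H}}$ via the valuative criterion. Given a discrete valuation ring $R$ with fraction field $K$, a $K$-point of ${\rm Q}^{\mathcal{H}}$ together with a compatible $R$-point of the target, I would apply $\eta$ to the resulting $R$-family $[F_{\mathcal{H}}(\mathcal{G})_R\to\mathcal{F}_R]$ to produce a candidate lift into ${\rm Q}^{\mathcal{H}}(\mathcal{G},\mathcal{X},P)$, then check that it agrees with the given $K$-point and maps to the given $R$-point, uniqueness being automatic from the monomorphism property.

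I expect the main obstacle to be precisely this last technical point: verifying that the cokernel construction defining $\eta$, applied to an $R$-flat family, again yields an $R$-flat quotient of $\mathcal{G}_R$ with constant modified Hilbert polynomial $P$ and with $\theta_{\mathcal{H}}({\rm ker})$ surjective --- that is, that $\eta$ extends from the image of $F_{\mathcal{H}}$ to an honest morphism over valuation rings (equivalently, that the image locus above is genuinely closed). The difficulty is that cokernels need not preserve flatness, so some care with the flat limit over $R$ and with the semicontinuity of the surjectivity of $\theta_{\mathcal{H}}$ along the special fibre will be required; once $\eta$ is known to be a well-defined morphism on the relevant closed locus, both the equalizer argument and the valuative criterion close immediately.
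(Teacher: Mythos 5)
Your second route is, in outline, exactly the paper's proof: the paper disposes of Corollary \ref{310} in one line by combining Lemma \ref{309} with the argument of \cite[Proposition 6.2]{OlSt}, which is precisely the chain ``monomorphism $+$ locally of finite presentation $+$ proper (via the valuative criterion) $\Rightarrow$ finitely presented closed immersion''. Your treatment of the monomorphism and finite-presentation parts is fine. The difficulties are concentrated in the closedness step, and while you correctly locate them, you do not resolve them. Your first (equalizer) route is not well-founded: the retraction $\eta$ from the proof of Lemma \ref{309} is only a set-theoretic construction on quotient modules, defined a priori on the image of $F_{\mathcal{H}}$; applied to an arbitrary $T$-point of ${\rm Q}(F_{\mathcal{H}}(\mathcal{G}),X,P)$, the cokernel need not be $T$-flat nor have Hilbert polynomial $P$, so $\eta$ is not a morphism of functors and the locus $\{\,y : F_{\mathcal{H}}(\eta(y))=y\,\}$ is not a well-defined subspace; appealing to separatedness there is circular, since one needs $\eta$ to be a morphism near the image to even write the equation.

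In the valuative criterion (your second route), the plan of applying $\eta$ to the $R$-family hits the flatness problem you name. The argument of \cite{OlSt} avoids it by a different construction: take the flat limit $\mathcal{F}_R:=\mathrm{im}(\mathcal{G}_R \rightarrow j_*\mathcal{F}_K)$ on the stack side (where $j:\mathcal{X}_K \rightarrow \mathcal{X}_R$), which is automatically $R$-flat, and then use exactness of $F_{\mathcal{H}}$ together with uniqueness of saturated extensions over a DVR to get $F_{\mathcal{H}}(\mathcal{F}_R)=F_R$. When $\mathcal{H}$ is a generating sheaf this finishes the proof; in the present generality one must \emph{in addition} show that this limit lies in the open subspace ${\rm Q}^{\mathcal{H}}$, i.e.\ that $\theta_{\mathcal{H}}(\ker q_R)$ remains surjective on the special fibre. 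This is the genuine new content of the corollary relative to \cite{OlSt}: since the target is proper over $S$, the corollary forces ${\rm Q}^{\mathcal{H}}$ to be proper over $S$, hence \emph{closed} (not merely open) in ${\rm Q}(\mathcal{G},\mathcal{X},P)$, and that closedness is exactly what the valuative criterion must deliver. Neither your proposal nor, in fairness, the paper's one-line citation supplies this step. A repair in the spirit of what you sketch: the kernel $\mathcal{K}_R$ of the flat limit is $R$-flat, so its $\mathcal{H}$-generated part $\mathcal{K}'_R:=\mathrm{im}\bigl(G_{\mathcal{H}}F_{\mathcal{H}}(\mathcal{K}_R)\rightarrow \mathcal{K}_R\bigr)$ agrees with $\mathcal{K}_R$ generically; using tameness ($\pi_*$ exact and commuting with base change) one checks \'etale-locally on $X$ that $\mathcal{G}_R/\mathcal{K}'_R$ is again $R$-flat, whence $\mathcal{K}'_R=\mathcal{K}_R$ by uniqueness of saturations, and surjectivity of $\theta_{\mathcal{H}}$ then descends to the special fibre by right-exactness of base change. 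Without some argument of this kind the proof is incomplete at precisely the point where the hypothesis ``$\mathcal{H}$ locally free'' replaces ``$\mathcal{H}$ generating''.
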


\begin{proof}
Based on Lemma \ref{309}, this proof is the same as \cite[Proposition 6.2]{OlSt}.
\end{proof}

\subsection{Boundedness}
Let $X$ be an algebraic space over $S$, where $S$ is an algebraic space over an algebraically closed field $k$. A set-theoretic family of coherent sheaves $\mathfrak{F}$ on $X \rightarrow S$ is a set of coherent sheaves defined on the fibers of $X \rightarrow S$. More precisely, the coherent sheaves are defined on $X_s$, where $s = \Spec (k)$ is a point of $S$. Denote by $F_s$ an element in $\mathfrak{F}$, where the subscript $s$ means that the coherent sheaf $F_s$ is defined on the fiber $X_s$.
\begin{defn}\label{311}
A set-theoretic family $\mathfrak{F}$ of coherent sheaves on the algebraic space $X \rightarrow S$ is bounded if there is an $S$-scheme $T$, and a coherent sheaf $F_T$ on $X_T:=X \times_S T$ such that the family $\mathfrak{F}$ is contained in the set of fibers of $F_T$.
\end{defn}

Let $X \rightarrow S$ be a projective morphism of algebraic spaces, and let $s=\Spec(k)$ be a point of $S$. Note that the morphism $X \rightarrow S$ is representable by schemes. Therefore, the fiber $X_s$ is a projective scheme for any point $s \in S$. Based on this property, a set-theoretic family of coherent sheaves on $X \rightarrow S$ is exactly a set of coherent sheaves on projective schemes, which are parameterized by algebraic spaces.

\begin{lem}\label{312}
Let $X$ be an algebraic space projective over $S$. A set-theoretic family $\mathfrak{F}$ of coherent sheaves on $X \rightarrow S$ can be considered as a set-theoretic family on $X_U \rightarrow U$, where $U$ is a scheme with an \'etale morphism $U \rightarrow S$ and $X_U:=X \times_S U$ is the pullback.
\end{lem}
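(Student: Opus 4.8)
The plan is to exploit the fact that a projective morphism of algebraic spaces is representable by schemes, so that after an \'etale base change the total space becomes an honest scheme. First I would take $U \rightarrow S$ to be a surjective \'etale morphism with $U$ a scheme (an atlas of $S$). Since $X \rightarrow S$ is projective, it is in particular representable by schemes, so the base change $X_U = X \times_S U \rightarrow U$ is a projective morphism \emph{of schemes}. In particular a set-theoretic family on $X_U \rightarrow U$ is a family of coherent sheaves on the fibers of an honest projective morphism of schemes, which is the classical situation already available to us.

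Next I would compare the geometric fibers of the two morphisms. For a geometric point $s: \Spec(k) \rightarrow S$, surjectivity and \'etaleness of $U \rightarrow S$ guarantee a lift $u: \Spec(k) \rightarrow U$ over $s$: the fiber of $U \rightarrow S$ over $s$ is a nonempty \'etale $k$-scheme, hence has a $k$-rational point because $k$ is algebraically closed. For any such $u$ lying over $s$ one has a canonical isomorphism of geometric fibers $(X_U)_u = (X \times_S U) \times_U \Spec(k) \cong X \times_S \Spec(k) = X_s$. Thus the fibers of $X_U \rightarrow U$ range exactly over the fibers $X_s$ of $X \rightarrow S$, possibly with repetition when several points of $U$ lie over a single $s$.

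With this identification in hand I would set up the correspondence of set-theoretic families. Given $\mathfrak{F} = \{F_s\}$ on $X \rightarrow S$, I assign to each geometric point $u$ of $U$, lying over $s$, the sheaf $F_s$ transported to $(X_U)_u$ through the isomorphism above; this produces a set-theoretic family on $X_U \rightarrow U$. Conversely, a family on $X_U \rightarrow U$ yields a family on $X \rightarrow S$ by collecting, for each $s \in S$, the sheaves attached to the fibers over the points $u$ above $s$; surjectivity of $U \rightarrow S$ ensures that every $X_s$ is reached. Because a set-theoretic family is only a \emph{set} of coherent sheaves on fibers, the possible repetition of fibers is harmless, and the two assignments identify the same collections of sheaves.

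The only genuine subtlety, and the point I would treat most carefully, is the passage from the algebraic space $X$ to the scheme $X_U$: it is precisely the representability by schemes of the projective morphism $X \rightarrow S$ that makes $X_U$ a scheme and hence makes ``family on $X_U \rightarrow U$'' an unambiguous classical notion; everything else is a formal manipulation of geometric fibers under an \'etale surjection. This identification is exactly what is needed downstream, since it reduces the boundedness of $\mathfrak{F}$ over the algebraic space $X \rightarrow S$ to boundedness of the corresponding family over the scheme $X_U \rightarrow U$.
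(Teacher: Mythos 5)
Your proposal is correct and follows essentially the same route as the paper: choose a surjective \'etale atlas $U \rightarrow S$, use that $k$ is algebraically closed to lift each point $s \in S$ to a point $u \in U \times_S s$, identify the geometric fibers $(X_U)_u \cong X_s$ via the cartesian squares, and transport the sheaves $F_s$ across this isomorphism. The only cosmetic differences are that you make explicit the representability-by-schemes step (which the paper states just before the lemma) and spell out the converse direction of the correspondence, neither of which changes the argument.
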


\begin{proof}
Let $U \rightarrow S$ be a surjective \'etale morphism, where $U$ is a scheme over $k$. Let $s=\Spec(k) \in S$ be a point. We have the following diagram and each square is cartesian.
\begin{center}
\begin{tikzcd}
    & X_U \times_U (U \times_S s) \arrow[ld] \arrow[rr] \arrow[dd] & & X_s \arrow[dd] \arrow[ld]\\
    X_U \arrow[rr] \arrow[dd]  & & X \arrow[dd] &\\
    & U \times_S s  \arrow[rr] \arrow[ld] & & s \arrow[ld] \\
    U \arrow[rr] & & S &
\end{tikzcd}
\end{center}
Since $U \rightarrow S$ is \'etale and surjective, $U \times_S s \rightarrow s$ is also \'etale and surjective. We know that $s=\Spec(k)$ and $k$ is an algebraically closed field. Thus, $U \times_S s$ is a product of finitely many points, i.e. $\Spec(k)$, in $U$. Note that  $X_U \times_U (U \times_S s) \cong (X_s \times_s (U \times_S s))$, which means that $X_U \times_U (U \times_S s)$ is a product of finitely many fibers $X_u$, where each fiber $X_u$ is isomorphic to $X_s$. Therefore, any element $F_s \in \mathfrak{F}$ can be considered as an  element over $(X_U)_u$, where $u=\Spec(k)$ is a point in $U \times_S s$ and $(X_U)_u \cong X_s$.
\end{proof}

Based on the above lemma, the boundedness of a set-theoretic family on $X \rightarrow S$ is equivalent to the boundedness of the corresponding family on $X_U \rightarrow U$, where $X_U$ is a projective scheme over $U$. Therefore, we have the following proposition.
\begin{prop}\label{313}
Let $X$ be a projective algebraic space over $S$ with a polarization $H$ on $X$. Let $\mathfrak{F}$ be a set-theoretic family of coherent sheaves on $X \rightarrow S$. The following statements are equivalent:
\begin{itemize}
\item[(1)] The family $\mathfrak{F}$ is bounded.
\item[(2)] The set of Hilbert polynomials for $F_s \in \mathfrak{F}$ is finite and there is a non-negative integer $m$ such that every $F$ is $m$-regular.
\item[(3)] The set of Hilbert polynomials is finite, and there is a coherent sheaf $\mathcal{G}_T$ on $\mathcal{X}_T$ such that every element $F \in \mathfrak{F}$ admits a surjective morphism $(\mathcal{G}_T)_t \rightarrow F$ for some point $t \in T$.
\end{itemize}
\end{prop}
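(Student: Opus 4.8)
The plan is to reduce the equivalence to the classical boundedness criterion for coherent sheaves on a projective scheme over a noetherian base, using Lemma~\ref{312} to eliminate the algebraic space. First I would fix a surjective \'etale morphism $u : U \to S$ with $U$ a scheme of finite type over $k$, so that $U$ is noetherian, and set $X_U := X \times_S U$; since $X \to S$ is projective and representable by schemes, $X_U \to U$ is a projective scheme over $U$, and the polarization $H$ pulls back to a relatively ample $H_U$. By Lemma~\ref{312} the set-theoretic family $\mathfrak{F}$ on $X \to S$ is identified with a set-theoretic family $\mathfrak{F}_U$ on $X_U \to U$, each member occurring as a genuine fibre $(X_U)_u \cong X_s$. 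Because Hilbert polynomials and $m$-regularity are computed fibrewise, they are unchanged under this identification, and by the discussion following Lemma~\ref{312} boundedness of $\mathfrak{F}$ is equivalent to boundedness of $\mathfrak{F}_U$. Thus each of the three conditions for $\mathfrak{F}$ on $X \to S$ is equivalent to its counterpart for $\mathfrak{F}_U$ on the projective scheme $X_U \to U$.

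Second, with everything transported to $X_U \to U$, I would invoke the classical equivalences (Kleiman's boundedness criterion together with Grothendieck's finiteness of Quot schemes and Mumford's uniform regularity; cf. \cite{Nir}). For $(1) \Rightarrow (2)$: a bounding sheaf on $X_U \times_U T'$ has, by generic flatness and noetherian induction on $T'$, only finitely many fibrewise Hilbert polynomials, and a bounded family admits a uniform $m$ for which every member is $m$-regular. For $(2) \Rightarrow (3)$: uniform $m$-regularity makes $F(m)$ globally generated with $h^0(F(m)) = P(m)$ controlled by the finite set of Hilbert polynomials, so every $F$ is a quotient of the single sheaf $\mathcal{O}_{X_U}(-m)^{\oplus N}$ with $N = \max_{P} P(m)$. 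For $(3) \Rightarrow (1)$: members that are common quotients of a fixed coherent sheaf with prescribed Hilbert polynomials are parameterized by finitely many components of a Quot scheme over $U$, which is of finite type, and its universal quotient provides the required bounding datum. Assembling the implications over $U$ and transporting back along Lemma~\ref{312} then gives the equivalence of $(1)$--$(3)$ for $\mathfrak{F}$.

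The step I expect to be the main obstacle is the descent of boundedness in the first paragraph: the definition of boundedness (Definition~\ref{311}) is phrased over the base $S$, so one must verify that a bounding datum $(T, F_T)$ over $S$ and a bounding datum over the \'etale chart $U$ correspond to each other, and, in the direction producing condition~$(3)$, that a single coherent sheaf bounds the \emph{whole} family rather than only its restriction over one of the finitely many copies comprising $U \times_S s$. Since $X_U \to U$ is projective over a noetherian scheme and the members of $\mathfrak{F}_U$ occur as actual fibres, these compatibilities are formal consequences of Lemma~\ref{312}, and the substantive content is entirely carried by the classical theorem for projective schemes.
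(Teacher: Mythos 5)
Your proposal follows exactly the route the paper takes: the paper offers no separate proof of Proposition~\ref{313}, stating it as an immediate consequence of Lemma~\ref{312} (boundedness over $X \to S$ is equivalent to boundedness of the transported family over the projective scheme $X_U \to U$) combined with the classical boundedness equivalences for projective schemes over a noetherian base. Your write-up simply makes explicit the classical implications and the descent of bounding data that the paper leaves implicit, so it is correct and essentially identical in approach.
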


Now let $\mathcal{X}$ be a projective Deligne-Mumford stack over $S$ with coarse moduli space $X$. A set-theoretic family $\mathfrak{F}$ of coherent sheaves on $\mathcal{X}$ is defined on the fibers of $\mathcal{X} \rightarrow S$.

\begin{defn}\label{314}
A set-theoretic family $\mathfrak{F}$ of coherent sheaves on $\mathcal{X}$ is bounded if there is an $S$-scheme $T$ of finite type and a coherent sheaf $\mathcal{F}_T$ on $\mathcal{X}_T$ such that every sheaf in $\mathfrak{F}$ is contained in the fiber of $\mathcal{F}_T$.
\end{defn}

Let $\mathcal{E}$ be a generating sheaf. Let $F_{\mathcal{E}} : {\rm QCoh}(\mathcal{X}) \rightarrow {\rm QCoh}(X)$ be the exact functor, which is injective. Restricting to a point $s \in S$, the induced functor $F_{\mathcal{E}_s}: {\rm QCoh}(\mathcal{X}_s) \rightarrow {\rm QCoh}(X_s)$ is still exact and injective. Denote by $F_{\mathcal{E}}(\mathfrak{F}):=\{F_{\mathcal{E}_s}(\mathcal{F}_s) \text{ } | \text{ } \mathcal{F}_s \in \mathfrak{F}\}$ the family of coherent sheaves on $X \rightarrow S$.

\begin{prop}\label{315}
The following statements of boundedness are equivalent:
\begin{itemize}
\item[(1)] The set-theoretic family $\mathfrak{F}$ of coherent sheaves on $\mathcal{X}$ is bounded.
\item[(2)] The set of Hilbert polynomials $P_{\mathcal{E}_s}(\mathcal{F}_s)$ for $\mathcal{F}_s \in \mathfrak{F}$ is finite and there is a positive integer $m$ such that $\mathcal{F}_s$ is $m$-regular.
\item[(3)] The set of Hilbert polynomials $P_{\mathcal{E}_s}(\mathcal{F}_s)$ for $\mathcal{F}_s \in \mathfrak{F}$ is finite, and there is a coherent sheaf $\mathcal{G}_T$ on $\mathcal{X}_T$ such that every $\mathcal{F}_s$ is a quotient of $(\mathcal{G}_T)_t$ for some point $t \in T$.
\end{itemize}
\end{prop}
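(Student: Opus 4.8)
The plan is to reduce Proposition~\ref{315} to its coarse-space counterpart, Proposition~\ref{313}, by transporting each of the three conditions through the functors $F_{\mathcal{E}}$ and $G_{\mathcal{E}}$ attached to the generating sheaf $\mathcal{E}$. Concretely, I would match condition $(i)$ for $\mathfrak{F}$ on $\mathcal{X}$ with condition $(i)$ of Proposition~\ref{313} for the family $F_{\mathcal{E}}(\mathfrak{F})$ on the projective algebraic space $X$ over $S$, and then use the coarse-space equivalences to chain the three stack conditions together. Since Proposition~\ref{313} already absorbs the passage from the algebraic space $S$ to a scheme via Lemma~\ref{312}, the remaining work is exactly the stack-to-coarse-space dictionary.

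The first ingredient is the base-change compatibility of $F_{\mathcal{E}}$. Because $\mathcal{X}$ is tame (so $\pi_*$ is exact and commutes with arbitrary base change) and $\mathcal{E}$ is locally free, the formation of $F_{\mathcal{E}}$ commutes with restriction to fibres, giving $F_{\mathcal{E}_s}(\mathcal{F}_s) \cong (F_{\mathcal{E}_T}(\mathcal{F}_T))_s$. By the definition of the modified Hilbert polynomial, $P_{\mathcal{E}_s}(\mathcal{F}_s)$ is the ordinary Hilbert polynomial of $F_{\mathcal{E}_s}(\mathcal{F}_s)$ on $X_s$, so the set of modified Hilbert polynomials of $\mathfrak{F}$ is finite precisely when the set of Hilbert polynomials of $F_{\mathcal{E}}(\mathfrak{F})$ is finite. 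Combined with Nironi's comparison of $\mathcal{E}$-regularity on $\mathcal{X}$ with Castelnuovo--Mumford regularity on $X$ (see \cite{Nir}), namely that $\mathcal{F}_s$ is $m$-regular if and only if $F_{\mathcal{E}_s}(\mathcal{F}_s)$ is $m$-regular, this identifies condition $(2)$ for $\mathfrak{F}$ with condition $(2)$ of Proposition~\ref{313}.

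Next I would record the transfers of conditions $(1)$ and $(3)$. If $\mathfrak{F}$ is bounded by a sheaf $\mathcal{F}_T$ on $\mathcal{X}_T$, then the exact, base-change-compatible functor $F_{\mathcal{E}_T}$ produces $F_{\mathcal{E}_T}(\mathcal{F}_T)$ on $X_T$ whose fibres contain every $F_{\mathcal{E}_s}(\mathcal{F}_s)$, so $F_{\mathcal{E}}(\mathfrak{F})$ is bounded; applying the exactness of $F_{\mathcal{E}}$ to a surjection $(\mathcal{G}_T)_t \to \mathcal{F}_s$ likewise turns condition $(3)$ on $\mathcal{X}$ into condition $(3)$ on $X$. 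Conversely, if $F_{\mathcal{E}}(\mathfrak{F})$ is contained in the fibres of $F_T$ on $X_T$, then $G_{\mathcal{E}_T}(F_T) = \pi^* F_T \otimes \mathcal{E}$ is coherent on $\mathcal{X}_T$, and since $\mathcal{E}$ is a generating sheaf the adjunction $\theta_{\mathcal{E}}(\mathcal{F}_s)$ is surjective, exhibiting each $\mathcal{F}_s$ as a quotient of $G_{\mathcal{E}_s}(F_{\mathcal{E}_s}(\mathcal{F}_s)) = (G_{\mathcal{E}_T}(F_T))_t$; together with the finiteness of modified Hilbert polynomials, this is exactly condition $(3)$ for $\mathfrak{F}$. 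Thus conditions $(2)$ and $(3)$ transfer biconditionally, while $(1)$ transfers at least forward, and Proposition~\ref{313} then gives the chain $(1)\Rightarrow(2)\Rightarrow(3)$ on $\mathcal{X}$.

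I expect the main obstacle to be the single genuinely non-formal implication $(3)\Rightarrow(1)$ on the stack, which closes the cycle: a quotient presentation $(\mathcal{G}_T)_t \to \mathcal{F}_s$ with only finitely many modified Hilbert polynomials must be upgraded to an honest bounding family. Here I would invoke the finiteness of the Olsson--Starr Quot space: the quotients of $\mathcal{G}_T$ with a fixed modified Hilbert polynomial $P$ are parametrized by ${\rm Q}(\mathcal{G}_T,\mathcal{X}_T,P)$, which is of finite type (indeed projective) over $T$ by Proposition~\ref{306}, and whose universal family (Theorem~\ref{307}) bounds the sheaves with that polynomial; a finite union over the finitely many polynomials then bounds $\mathfrak{F}$. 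A secondary technical point to verify carefully is the base-change identity $F_{\mathcal{E}_s}(\mathcal{F}_s) \cong (F_{\mathcal{E}_T}(\mathcal{F}_T))_s$ over a general $T$, on which the entire dictionary rests and which uses both tameness and local freeness of $\mathcal{E}$; and Lemma~\ref{312} is what allows all of this to be carried out over a scheme $U$ \'etale over the algebraic space $S$.
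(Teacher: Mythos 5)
Your proof is correct, but it is organized differently from the paper's. The paper performs the two reductions in the opposite order: it first uses Lemma \ref{312} to replace the algebraic-space base $S$ by a scheme $U$ \'etale over it, so that $\mathcal{X}_U \rightarrow X_U \rightarrow U$ is exactly Nironi's setting, and then declares the proof to be word-for-word that of \cite[Theorem 4.12]{Nir}; all of the stack-to-coarse-space content (the $F_{\mathcal{E}}$/$G_{\mathcal{E}}$ dictionary, base-change compatibility coming from tameness, surjectivity of $\theta_{\mathcal{E}}$, and the Quot-space argument for $(3)\Rightarrow(1)$) is thus outsourced to the citation. You instead keep the base $S$ as an algebraic space throughout, transport the three conditions to the coarse space $X$ over $S$, invoke Proposition \ref{313} there (which already absorbs Lemma \ref{312}), and close the cycle with $(3)\Rightarrow(1)$ via the paper's own Proposition \ref{306} and Theorem \ref{307}. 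What your route buys is self-containedness: the argument stays internal to the paper's results and makes explicit the base-change identity $F_{\mathcal{E}_s}(\mathcal{F}_s)\cong(F_{\mathcal{E}_T}(\mathcal{F}_T))_s$ on which the citation-style proof silently relies; what the paper's route buys is brevity, since no part of Nironi's argument has to be reproduced. One small patch your version needs: Definitions \ref{311} and \ref{314} require the bounding family to be parametrized by an $S$-scheme of finite type, whereas ${\rm Q}(\mathcal{G}_T,\mathcal{X}_T,P)$ is a priori only an algebraic space; so in the $(3)\Rightarrow(1)$ step you should pull the universal family back along a finite-type \'etale scheme cover of the Quot space, and then take the finite disjoint union over the finitely many modified Hilbert polynomials.
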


\begin{proof}
This proposition is the ``algebraic space" version of \cite[Theorem 4.12]{Nir}. The setup of this proposition is that $X \rightarrow S$ is a projective morphism of algebraic spaces, while $X \rightarrow S$ is a projective of schemes in Theorem 4.12 in \cite{Nir}. In Lemma \ref{312}, we have already proved that the boundedness of a family of coherent sheaves over algebraic spaces is equivalent to the boundedness of the corresponding family of coherent sheaves over schemes (the \'etale covering). This property implies that we only have to work in the case of schemes. Based on this fact, the proof of this proposition is exactly in the same way as the proof of \cite[Theorem 4.12]{Nir}.
\end{proof}

The above proposition implies the following corollaries.
\begin{cor}\label{316}
A family $\mathfrak{F}$ of coherent sheaves on $\mathcal{X} \rightarrow S$ is bounded if and only if the corresponding family $F_{\mathcal{E}}(\mathfrak{F})$ on $X \rightarrow S$ is bounded.
\end{cor}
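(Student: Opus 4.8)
The plan is to deduce this equivalence by matching the intrinsic characterizations of boundedness provided by Proposition \ref{315} (on $\mathcal{X}$) and Proposition \ref{313} (on $X$), using the functors $F_{\mathcal{E}}$ and $G_{\mathcal{E}}$ as a translation device between the stack and its coarse moduli space. The bridge between the two sides is the identity $P_{\mathcal{E}_s}(\mathcal{F}_s,m)=\chi(X_s,F_{\mathcal{E}_s}(\mathcal{F}_s)(m))$ recorded after Definition \ref{303}: the modified Hilbert polynomial of $\mathcal{F}_s$ on $\mathcal{X}_s$ is literally the ordinary Hilbert polynomial of $F_{\mathcal{E}_s}(\mathcal{F}_s)$ on $X_s$. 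Consequently the finiteness clause appearing in conditions (2) and (3) of both propositions holds for $\mathfrak{F}$ if and only if it holds for $F_{\mathcal{E}}(\mathfrak{F})$, so the whole argument reduces to matching the existence of a single bounding sheaf.

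For the forward implication, I would suppose $\mathfrak{F}$ is bounded, so by Definition \ref{314} there is an $S$-scheme $T$ of finite type and a coherent sheaf $\mathcal{F}_T$ on $\mathcal{X}_T$ whose fibers contain $\mathfrak{F}$. Applying $F_{\mathcal{E}_T}$ produces the coherent sheaf $F_{\mathcal{E}_T}(\mathcal{F}_T)$ on $X_T$. Because $\mathcal{E}$ is locally free and $\pi_*$ is exact on the tame stack $\mathcal{X}$, the functor $F_{\mathcal{E}}$ commutes with restriction to the fibers of a flat family, so the fibers of $F_{\mathcal{E}_T}(\mathcal{F}_T)$ are exactly the sheaves $F_{\mathcal{E}_s}((\mathcal{F}_T)_t)$ and hence contain $F_{\mathcal{E}}(\mathfrak{F})$; by Definition \ref{311} this exhibits $F_{\mathcal{E}}(\mathfrak{F})$ as bounded.

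For the converse I would exploit the generating property of $\mathcal{E}$, which is exactly where the hypothesis that $\mathcal{E}$ is a generating sheaf becomes indispensable. Assuming $F_{\mathcal{E}}(\mathfrak{F})$ bounded, Proposition \ref{313}(3) gives a finite set of Hilbert polynomials and a coherent sheaf $G_T$ on $X_T$ such that every $F_{\mathcal{E}_s}(\mathcal{F}_s)$ is a quotient of some fiber $(G_T)_t$. Applying the right exact functor $G_{\mathcal{E}}=\pi^*(-)\otimes\mathcal{E}$ turns such a surjection into a surjection $G_{\mathcal{E}}((G_T)_t)\twoheadrightarrow G_{\mathcal{E}}(F_{\mathcal{E}_s}(\mathcal{F}_s))$, and composing with the adjunction morphism $\theta_{\mathcal{E}}(\mathcal{F}_s)\colon G_{\mathcal{E}}(F_{\mathcal{E}_s}(\mathcal{F}_s))\to\mathcal{F}_s$ — which is surjective precisely because $\mathcal{E}$ generates (Definition \ref{301}) — yields a surjection onto $\mathcal{F}_s$ from the fiber of the fixed sheaf $G_{\mathcal{E}_T}(G_T)$ on $\mathcal{X}_T$. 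Combined with the finiteness of the modified Hilbert polynomials, condition (3) of Proposition \ref{315} then delivers the boundedness of $\mathfrak{F}$.

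The routine parts are the polynomial bookkeeping and the exactness/right-exactness statements; the two places demanding genuine input are the base-change compatibility of $F_{\mathcal{E}}$ in the forward direction (secured by tameness, local freeness of $\mathcal{E}$, and flatness of the family, with Lemma \ref{312} reducing the fiberwise claim to schemes) and the surjectivity of $\theta_{\mathcal{E}}(\mathcal{F}_s)$ in the converse. I expect the latter to be the conceptual heart of the proof, since it is exactly what permits boundedness to be transported back up from the coarse space $X$ to the stack $\mathcal{X}$.
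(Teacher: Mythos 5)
Your proposal is correct and takes essentially the same route as the paper: the paper deduces Corollary \ref{316} by matching the boundedness criteria of Proposition \ref{313} and Proposition \ref{315} through the identity $P_{\mathcal{E}_s}(\mathcal{F}_s,m)=\chi\bigl(X_s,F_{\mathcal{E}_s}(\mathcal{F}_s)(m)\bigr)$, and your two directions (applying $F_{\mathcal{E}}$ to a bounding family, resp.\ applying $G_{\mathcal{E}}$ and composing with the surjective adjunction $\theta_{\mathcal{E}}$ to feed condition (3) of Proposition \ref{315}) are exactly the mechanisms underlying those propositions. One minor point: the base-change compatibility of $F_{\mathcal{E}}$ along fibers does not require flatness of the bounding family (which Definition \ref{314} does not provide anyway); it follows from tameness alone, since pushforward to the coarse moduli space of a tame stack commutes with arbitrary base change.
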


\begin{cor}\label{317}
Let $P$ be an integer polynomial. The family of $\mathcal{H}$-semistable sheaves with modified Hilbert polynomial $P$ on $\mathcal{X}$ is bounded.
\end{cor}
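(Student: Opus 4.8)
The plan is to transport the problem to the coarse moduli space $X$, where Propositions \ref{313} and \ref{315} apply, and then to feed the defining inequality of $\mathcal{H}$-semistability into a Le Potier--Simpson estimate. Since $\mathcal{X}$ is a projective Deligne--Mumford stack, and therefore a separated global quotient, Theorem \ref{302} furnishes a generating sheaf $\mathcal{E}$. By Corollary \ref{316} the family $\mathfrak{F}$ of $\mathcal{H}$-semistable sheaves with modified Hilbert polynomial $P$ is bounded on $\mathcal{X} \to S$ if and only if $F_{\mathcal{E}}(\mathfrak{F})$ is bounded on $X \to S$, and by Lemma \ref{312} the latter may be tested on $X_U \to U$ for an \'etale chart $U \to S$ with $U$ a scheme, i.e. on an honest projective morphism of schemes. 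Thus it suffices to verify, on $X_U$, the criterion of Proposition \ref{313}: that the set of Hilbert polynomials occurring in $F_{\mathcal{E}}(\mathfrak{F})$ is finite and that a single integer $m$ makes every member $m$-regular.

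The essential input is the comparison of the two normalisations. For $\mathcal{F} \in \mathfrak{F}$ the dimension $d$ and the multiplicity $\alpha_{\mathcal{H},d}(\mathcal{F})$ are read off from the fixed polynomial $P_{\mathcal{H}}(\mathcal{F}) = P$, and since $\mathcal{H}$ and $\mathcal{E}$ are both fixed locally free sheaves the leading data of $F_{\mathcal{E}}(\mathcal{F})$ are likewise controlled. More importantly, the slope computed with $\mathcal{H}$ and the slope computed with $\mathcal{E}$ differ only by a correction governed by $\mathcal{H}^{\vee} \otimes \mathcal{E}$, which is uniform over $\mathfrak{F}$. Consequently, the $\mathcal{H}$-semistability inequality $p_{\mathcal{H}}(\mathcal{F}') \leq p_{\mathcal{H}}(\mathcal{F})$ for all proper subsheaves $\mathcal{F}' \subseteq \mathcal{F}$, transported through the exact functor $F_{\mathcal{E}}$, yields a uniform upper bound on the $\mathcal{E}$-slopes, equivalently on $\hat{\mu}_{\max}$, of all coherent subsheaves of $F_{\mathcal{E}}(\mathcal{F})$ on $X_U$.

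Once such a uniform bound on $\hat{\mu}_{\max}$ is in hand, the statement becomes an instance of the classical boundedness machinery: Grothendieck's lemma confines the Hilbert polynomials of the relevant subsheaves to a finite set, a Le Potier--Simpson estimate bounds $h^0$ of each member uniformly in terms of these slopes, and Kleiman's criterion then produces a single $m$ regularising the whole family, so that the finiteness of the Hilbert polynomials of $F_{\mathcal{E}}(\mathfrak{F})$ drops out together with the fixed leading data. This is exactly the route by which Nironi proves boundedness for sheaves semistable with respect to a generating sheaf \cite[Corollary 4.30]{Nir}, and the present case differs only in that the semistability hypothesis is phrased through the arbitrary locally free $\mathcal{H}$ rather than through $\mathcal{E}$. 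I expect the main obstacle to be not a new geometric phenomenon but the careful bookkeeping needed to show that the $\mathcal{H}$-normalised bound on subsheaves translates into a uniform $\mathcal{E}$-normalised bound; once that comparison is made precise, the Simpson estimate applies verbatim.
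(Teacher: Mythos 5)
Your proposal is correct and follows essentially the same route as the paper: the paper derives Corollary \ref{317} by invoking Proposition \ref{315} and Corollary \ref{316} (which, via Lemma \ref{312} and the generating sheaf $\mathcal{E}$ of Theorem \ref{302}, reduce boundedness on $\mathcal{X}$ to boundedness of $F_{\mathcal{E}}(\mathfrak{F})$ over a projective morphism of schemes), and then appeals to Nironi's Kleiman/Langer-type estimates with the remark that they work for an arbitrary locally free $\mathcal{H}$. In fact your write-up is more explicit than the paper's, which leaves the $\mathcal{H}$-versus-$\mathcal{E}$ slope comparison (the "bookkeeping" you flag) entirely implicit.
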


\subsection{Construction of the Moduli Space of $\mathcal{H}$-semistable Sheaves}
Now we consider the family $\mathfrak{F}^{\mathcal{H}}_{ss}(P)$ of purely $d$-dimensional $\mathcal{H}$-semistable coherent sheaves with modified Hilbert polynomial $P$. By Corollary \ref{317}, the family $\mathfrak{F}^{\mathcal{H}}_{ss}(P)$ is bounded. Thus we can find an integer $m$ such that $\mathcal{F}$ is $m$-regular for any $\mathcal{F} \in \mathfrak{F}^{\mathcal{H}}_{ss}(P)$. In the classical case (as schemes), there is an upper bound of the set
\begin{align*}
\{h^0(X,F) \text{ } | \text{ } F \text{ is a pure sheaf of dimension $d$ with Hilbert polynomial $P$}\},
\end{align*}
and the upper bound only depends on the maximal slope (note that $F$ may not be semistable), the multiplicity and dimension of $F$ (see \cite[Corollary 3.4]{Lan20041}). F. Nironi generalized this result and proved an upper bound of global sections for the family of $\mathcal{E}$-semistable sheaves on projective Deligne-Mumford stacks, where $\mathcal{E}$ is a generating sheaf (see \cite[Corollary 4.30]{Nir}). This approach also works for any locally free sheaf $\mathcal{H}$ and the family of $\mathcal{H}$-semistable sheaves. In other words, there is an upper bound for the family
\begin{align*}
\{h^0(\mathcal{X},\mathcal{F} \otimes \mathcal{H}^{\vee} \otimes \pi^* \mathcal{O}_X(m) ) \text{ } | \text{ } \mathcal{F} \text{ is a $\mathcal{H}$-semistable sheaf with modified Hilbert polynomial $P$}\},
\end{align*}
and the upper bound only depends on the multiplicity, the dimension $d$ and the slope. Therefore, we can choose a positive integer $N$ large enough such that for any $\mathcal{F} \in \mathfrak{F}^{\mathcal{H}}_{ss}(P)$, we have
\begin{align*}
P(N) \geq P_{\mathcal{H}}(\mathcal{F},m)=h^0(X/S,F_{\mathcal{H}}(\mathcal{F})(m)).
\end{align*}
Denote by $V$ be the linear space $S^{\oplus P(N)}$, and we have
\begin{align*}
V  \cong H^0(X/S,F_{\mathcal{H}}(\mathcal{F})(N)).
\end{align*}
By the above discussion, any coherent sheaf $\mathcal{F} \in \mathfrak{F}^{\mathcal{H}}_{ss}(P)$ corresponds to a surjection
\begin{align*}
V \otimes \mathcal{G} \rightarrow \mathcal{F},
\end{align*}
where $\mathcal{G} \cong \mathcal{H} \otimes \pi^* \mathcal{O}_X(-N)$, together with an isomorphism $V \cong H^0(X/S,F_{\mathcal{H}}(\mathcal{F})(N))$.

Now we consider the algebraic space ${\rm Q}(V \otimes \mathcal{G},\mathcal{X},P)$. Let $[V \otimes \mathcal{G} \rightarrow \mathcal{F}]$ be an element in the algebraic space. Under the exact functor $\pi_*: {\rm Qcoh}(\mathcal{X}) \rightarrow {\rm Qcoh}(X)$, we have a morphism
\begin{align*}
V \otimes \mathcal{O}_X(-N) \rightarrow \pi_*(\mathcal{F} \otimes \mathcal{H}^{\vee}),
\end{align*}
which induces the following one
\begin{align*}
\alpha: V \rightarrow H^0(X/S, (F_{\mathcal{H}}(\mathcal{F}))(N) ).
\end{align*}
Denote by $Q^{\mathcal{H}}$ the subspace of ${\rm Q}^{\mathcal{H}}(V \otimes \mathcal{G},\mathcal{X},P)$ parametrizing quotients $[q: V \otimes \mathcal{G} \rightarrow \mathcal{F}]$ such that
\begin{enumerate}
\item the inducing morphism $\alpha: V \rightarrow H^0(X/S, F_{\mathcal{H}}(\mathcal{F})(N))$ is an isomorphism,
\item $\theta_{\mathcal{H}}( {\rm ker}(q) )$ is surjective.
\end{enumerate}
Both conditions are open condition. Therefore, $Q^{\mathcal{H}} \subseteq {\rm Q}^{\mathcal{H}}(V \otimes \mathcal{G},\mathcal{X},P) \subseteq {\rm Q}(V \otimes \mathcal{G},\mathcal{X},P)$ is an open subset. Denote by $Q_{ss}^{\mathcal{H}}$ the subset of $Q^{\mathcal{H}}$ such that the coherent sheaf $\mathcal{F}$ is $\mathcal{H}$-semistable. The open set $Q^{\mathcal{H}}_{ss} \subseteq {\rm Q}(V \otimes \mathcal{G},P)$ corresponds to the family $\mathfrak{F}^{\mathcal{H}}_{ss}(P)$. With the same approach, we can construct the algebraic space $Q^{\mathcal{H}}_{s} \subseteq {\rm Q}(V \otimes \mathcal{G},P)$ including all $\mathcal{H}$-stable sheaves.

Now we will consider how to construct a GIT quotient of $Q^{\mathcal{H}}$ with respect to the natural $\text{SL}(V)$-action. The functor $F_{\mathcal{H}}$ induces a morphism of quot-spaces
\begin{align*}
{\rm Q}(V \otimes \mathcal{G},\mathcal{X},P) \rightarrow {\rm Q}(F_{\mathcal{H}}(V \otimes \mathcal{G}),X,P).
\end{align*}
Note that this morphism may not be injective. However, it is injective when restricted to $Q^{\mathcal{H}}$ by Corollary \ref{310}. More precisely, $Q^{\mathcal{H}} \hookrightarrow {\rm Q}(F_{\mathcal{H}}(V \otimes \mathcal{G}),X,P)$ is a finitely-presented closed embedding. Also, there is a natural embedding
\begin{align*}
    \psi_N: {\rm Q}( F_{\mathcal{H}}(V \otimes \mathcal{G}), X, P) \hookrightarrow {\rm Grass}(H^0(X/S,  F_{\mathcal{H}}(V \otimes \mathcal{G})(N) ),P(N)
    ),
\end{align*}
where $N$ is a large enough positive integer. Thus, we have
\begin{align*}
Q^{\mathcal{H}} \hookrightarrow {\rm Grass}(H^0(X/S,  F_{\mathcal{H}}(V \otimes \mathcal{G})(N) ),P(N)) = {\rm Grass}(V \otimes H^0(X/S,  F_{\mathcal{H}}( \mathcal{G})(N) ),P(N)).
\end{align*}
Denote by $\mathscr{L}_N$ the pull-back of the canonical invertible sheaf on the Grassmannian via $\psi_N$. Note that the natural group action ${\rm SL}(V)$ on $Q^{\mathcal{H}}$ induces an action on the line bundle $\mathscr{L}_N$. Now we have a group action ${\rm SL}(V)$ on $Q^{\mathcal{H}}$ and an ample line bundle $\mathscr{L}_N$ over $Q^{\mathcal{H}}$. With respect to the line bundle $\mathscr{L}_N$ and the group action $\text{SL}(V)$, we can define the semistable (resp. stable) points on $Q^{\mathcal{H}}$. Next, we will prove that a point $[V \otimes \mathcal{G} \rightarrow \mathcal{F}] \in Q^{\mathcal{H}}$ is semistable if and only if $\mathcal{F}$ is $\mathcal{H}$-semistable (Theorem \ref{320}). Before we prove the statement, we first review two lemmas.

\begin{lem}[Lemma 6.10 in \cite{Nir}]\label{318}
If $\mathcal{F}$ is a coherent sheaf on $\mathcal{X}$ that can be deformed to a pure sheaf of the same dimension $d$, then there is a pure sheaf $\mathcal{K}$ of dimension $d$ on $\mathcal{X}$ and a map $\mathcal{F} \rightarrow \mathcal{K}$ such that the kernel is $T_{d-1}(\mathcal{F})$ and $P_{\mathcal{H}}(\mathcal{F})=P_{\mathcal{H}}(\mathcal{K})$.
\end{lem}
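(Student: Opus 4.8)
The plan is to unwind the hypothesis into a flat family over a discrete valuation ring and then run a flat-limit purification argument, all transported to $\mathcal{X}$ through the exact functor $F_{\mathcal{H}}$. First I would make the phrase ``$\mathcal{F}$ can be deformed to a pure sheaf of the same dimension'' precise: after base change to a smooth pointed curve germ, i.e. to $\operatorname{Spec}(R)$ for a discrete valuation ring $R$ with fraction field $K$ and residue field $k$, we obtain an $R$-flat coherent sheaf $\mathcal{F}_R$ on $\mathcal{X}_R=\mathcal{X}\times_S\operatorname{Spec}(R)$ whose closed fibre is $\mathcal{F}$ and whose generic fibre $\mathcal{F}_K$ is pure of dimension $d$. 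Since $F_{\mathcal{H}}=\pi_*\mathcal{H}om(\mathcal{H},-)$ is exact (here $\pi_*$ is exact by tameness and $\mathcal{H}$ is locally free) and identifies $P_{\mathcal{H}}(-)$ with the ordinary Hilbert polynomial of $F_{\mathcal{H}}(-)$ on the projective coarse space $X$, the polynomial $P_{\mathcal{H}}$ is additive on short exact sequences and, by cohomology and base change, invariant under $R$-flat specialization; in particular flatness gives $P_{\mathcal{H}}(\mathcal{F})=P_{\mathcal{H}}(\mathcal{F}_K)$. Note also that the support dimension of a sheaf and the torsion filtration are intrinsic to $\operatorname{Coh}(\mathcal{X})$, so the maximal subsheaf $T_{d-1}(\mathcal{F})$ of dimension $\le d-1$, together with the pure quotient $\bar{\mathcal{F}}:=\mathcal{F}/T_{d-1}(\mathcal{F})$, are available whether or not $\mathcal{H}$ is a generating sheaf. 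This is the only point at which the generating hypothesis of \cite[Lemma 6.10]{Nir} is relaxed, and the argument is otherwise unchanged.

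Next I would construct $\mathcal{K}$ as the closed fibre of the saturation of $\mathcal{F}_R$ inside $\mathcal{F}_K$. Because $\mathcal{F}_R$ is $R$-flat it is $R$-torsion free, so $\mathcal{F}_R\hookrightarrow\mathcal{F}_K$. Writing $\varpi$ for a uniformizer and $\widetilde{T}\subseteq\mathcal{F}_R$ for the preimage of $T_{d-1}(\mathcal{F})$ under $\mathcal{F}_R\twoheadrightarrow\mathcal{F}$, the subsheaf $\varpi^{-1}\widetilde{T}\subseteq\mathcal{F}_K$ is again $R$-flat (a submodule of the torsion-free module $\mathcal{F}_K$), has generic fibre $\mathcal{F}_K$, and fits into $0\to\mathcal{F}_R\to\varpi^{-1}\widetilde{T}\to T_{d-1}(\mathcal{F})\to 0$. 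Tensoring with $k$ and using $\operatorname{Tor}_1^R(T_{d-1}(\mathcal{F}),k)=T_{d-1}(\mathcal{F})$, one checks that the induced map on closed fibres has kernel exactly $T_{d-1}(\mathcal{F})$, so that $\bar{\mathcal{F}}$ embeds into the new closed fibre with a quotient of dimension $\le d-1$. Iterating this \emph{upper modification} (each time replacing the current $R$-flat model by $\varpi^{-1}$ of the preimage of the torsion subsheaf of its closed fibre) produces an increasing chain of $R$-flat subsheaves of $\mathcal{F}_K$, all with generic fibre $\mathcal{F}_K$; the chain stabilizes, and at the stable stage the closed fibre $\mathcal{K}$ is pure of dimension $d$. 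Since $\bar{\mathcal{F}}$, being pure, meets the successive $(\le d-1)$-dimensional kernels trivially, it embeds into $\mathcal{K}$, the composite $\mathcal{F}\to\mathcal{K}$ has kernel precisely $T_{d-1}(\mathcal{F})$, and $R$-flatness at every stage forces $P_{\mathcal{H}}(\mathcal{K})=P_{\mathcal{H}}(\mathcal{F}_K)=P_{\mathcal{H}}(\mathcal{F})$.

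The main obstacle is the termination and purity of this saturation procedure: one must show the increasing chain $\mathcal{F}_R\subseteq\varpi^{-1}\widetilde{T}\subseteq\cdots\subseteq\mathcal{F}_K$ is bounded (so that it stabilizes) and that the stable closed fibre carries no nonzero subsheaf of dimension $<d$. Boundedness follows because all terms are contained in a fixed coherent hull of $\mathcal{F}_R$ in $\mathcal{F}_K$ (for instance its reflexive, or $S_2$, envelope), and purity of the limiting closed fibre is exactly the statement that a flat family with pure generic fibre admits a model with pure closed fibre. The Hilbert-polynomial bookkeeping then takes care of itself: the identity $P_{\mathcal{H}}(\mathcal{K})=P_{\mathcal{H}}(\bar{\mathcal{F}})+P_{\mathcal{H}}(T_{d-1}(\mathcal{F}))$ is not verified by hand but is imposed by the flatness of each model over $R$. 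The remaining verifications, namely exactness and dimension-preservation of $F_{\mathcal{H}}$, additivity of $P_{\mathcal{H}}$, and the flat-limit equality $P_{\mathcal{H}}(\mathcal{F})=P_{\mathcal{H}}(\mathcal{F}_K)$, are routine, so the proof reduces, exactly as in \cite[Lemma 6.10]{Nir}, to the scheme-theoretic purification applied after $F_{\mathcal{H}}$.
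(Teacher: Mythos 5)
Your construction is sound, but note what you are actually comparing against: the paper does not prove this lemma at all --- it imports it verbatim as Lemma 6.10 of \cite{Nir}, whose proof (stated there for a \emph{generating} sheaf $\mathcal{E}$) is the Langton-type flat-limit argument of \cite[Lemma 4.4.2]{HuLe}. Your proof is that same argument, transported to $\mathcal{X}_R$: a DVR model with $\mathcal{F}$ as closed fibre and pure generic fibre, iterated upper modifications $\mathcal{F}_R\subseteq\varpi^{-1}\widetilde{T}\subseteq\cdots$, termination, and $R$-flatness forcing $P_{\mathcal{H}}(\mathcal{K})=P_{\mathcal{H}}(\mathcal{F}_K)=P_{\mathcal{H}}(\mathcal{F})$. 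What your write-up adds, and what the paper's bare citation does not literally cover, is the check that the generating hypothesis is irrelevant: the modifications, torsion filtration, dimension and purity are intrinsic to ${\rm Coh}(\mathcal{X}_R)$, and $F_{\mathcal{H}}$ enters only through its exactness (tameness of $\pi$ plus local freeness of $\mathcal{H}$), which gives $R$-flatness of each $F_{\mathcal{H}}(\mathcal{F}^n)$, compatibility with passage to fibres, and hence constancy of $P_{\mathcal{H}}$ in flat families. Since the paper applies the lemma to an arbitrary locally free $\mathcal{H}$, this is precisely the justification its citation is missing, so your argument is genuinely the right thing to supply here.

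Two points need repair. First, termination, which you rightly call the main obstacle, is only sketched, and the hull you name must be chosen correctly: the naive reflexive envelope $\mathcal{H}om(\mathcal{H}om(-,\mathcal{O}),\mathcal{O})$ is zero for sheaves whose support has positive codimension. What bounds the chain is the $S_2$-hull $\mathcal{F}_R^{DD}$, where $D=\mathcal{E}xt^{c}(-,\omega_{\mathcal{X}_R})$ and $c$ is the codimension of ${\rm Supp}(\mathcal{F}_R)$: each $\mathcal{F}^n$ is $R$-torsion-free with pure generic fibre, hence embeds into $(\mathcal{F}^n)^{DD}$, and $(\mathcal{F}^n)^{DD}=\mathcal{F}_R^{DD}$ because $\mathcal{F}^n/\mathcal{F}_R$ has dimension $\leq d-1$, which is ${\rm dim}\,{\rm Supp}(\mathcal{F}_R)-2$; this $\mathcal{E}xt$-duality is available on projective tame Deligne--Mumford stacks (it is developed in \cite{Nir}), or one can instead invoke Langton's original termination argument. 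Second, your closing sentence --- that the proof ``reduces to the scheme-theoretic purification applied after $F_{\mathcal{H}}$,'' together with the alleged ``dimension-preservation of $F_{\mathcal{H}}$'' --- is false for non-generating $\mathcal{H}$: $F_{\mathcal{H}}$ need not preserve dimension (it can kill a nonzero sheaf outright), is not faithful, and a purification performed downstairs on $X$ cannot be lifted back to $\mathcal{X}$. Fortunately your actual construction never uses this: it purifies on $\mathcal{X}_R$ itself and uses $F_{\mathcal{H}}$ only for the Hilbert-polynomial bookkeeping. Delete that sentence and state the hull precisely, and the proof is complete.
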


\begin{lem}\label{319}
A point $[V \otimes W \rightarrow U]$ in ${\rm Grass}_S(V \otimes W,a)$, where $a$ is a positive integer, is semistable for the ${\rm SL}(V)$-action and the canonical invertible sheaf if and only if, for all non-trivial proper subspaces $H \subseteq V$, we have ${\rm Im}(H \otimes W) \neq 0$ and
\begin{align*}
\frac{\dim(H)}{\dim({\rm Im}(H \otimes W)  )} \leq \frac{\dim(V)}{\dim(U)}.
\end{align*}
\end{lem}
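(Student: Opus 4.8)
The plan is to verify the numerical (Hilbert--Mumford) criterion and then translate it into the stated inequality. Since $V$ is a $k$-vector space and ${\rm Grass}_S(V\otimes W,a)$ is relative over the algebraic space $S$, I would first reduce to a scheme base exactly as in the proof of Corollary \ref{222}: for an \'etale surjection $U\to S$ with $U$ a scheme, the induced map ${\rm Grass}_U(V\otimes W,a)\to{\rm Grass}_S(V\otimes W,a)$ is an \'etale chart, so by Theorem \ref{206} semistability may be tested there and the problem becomes the classical one over a field. On the scheme level the ${\rm SL}(V)$-action need not be proper, so rather than Proposition \ref{221} I would invoke the classical numerical criterion \cite[Theorem 2.1]{MuFoKir}: the point $x=[q\colon V\otimes W\to U]$ is ${\rm SL}(V)$-semistable for the canonical bundle $\mathscr{L}$ (whose fiber at $x$ is $\wedge^a U$) if and only if $\mu^{\mathscr{L}}(x,\lambda)\ge 0$ for every one-parameter subgroup $\lambda$ of ${\rm SL}(V)$, in the sign convention of Definition \ref{220}.

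The second step is the explicit computation of $\mu^{\mathscr{L}}(x,\lambda)$. A one-parameter subgroup $\lambda\colon\mathbb{G}_m\to{\rm SL}(V)$ is the same datum as a weight decomposition of $V$; choosing a basis $e_1,\dots,e_m$ of $V$ (with $m=\dim V$) adapted to $\lambda$, with integer weights $\rho_1\le\cdots\le\rho_m$ and $\sum_p\rho_p=0$, I set $V_{\le p}=\langle e_1,\dots,e_p\rangle$ and $d_p=\dim q(V_{\le p}\otimes W)$, so that $d_0=0$ and $d_m=\dim U=a$. Determining the fixed point $\lambda_x(0)$ amounts to a greedy choice of a minimal-weight basis of $U$ among the images $q(e_j\otimes w)$, and the induced character of $\mathbb{G}_m$ on $\wedge^a U$ at the limit is $\sum_{p=1}^m\rho_p(d_p-d_{p-1})$; with the sign of Definition \ref{220} this gives $\mu^{\mathscr{L}}(x,\lambda)=-\sum_{p=1}^m\rho_p(d_p-d_{p-1})$.

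The third step is a summation-by-parts collapsing the criterion onto single subspaces. Writing $\delta_p=\rho_{p+1}-\rho_p\ge 0$ and using the traceless condition $\sum_p\rho_p=0$, Abel summation yields
\begin{equation*}
\mu^{\mathscr{L}}(x,\lambda)=\frac{1}{m}\sum_{p=1}^{m-1}\delta_p\bigl(m\,d_p-p\,a\bigr),
\end{equation*}
a nonnegative combination of the quantities $m\,d_p-p\,a$ attached to the filtration steps $H=V_{\le p}$. For a fixed proper nonzero $H\subseteq V$ of dimension $h$, the two-step subgroup $\lambda_H$ with weight $h-m$ on $H$ and weight $h$ on a complement of $H$ lies in ${\rm SL}(V)$ and gives precisely $\mu^{\mathscr{L}}(x,\lambda_H)=m\,d_H-h\,a$, where $d_H=\dim{\rm Im}(H\otimes W)$. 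Hence $\mu^{\mathscr{L}}(x,\lambda)\ge 0$ for all $\lambda$ if and only if $m\,d_H-h\,a\ge 0$ for every proper nonzero $H$; when $d_H>0$ this is the asserted inequality $\dim H/\dim{\rm Im}(H\otimes W)\le\dim V/\dim U$, and when $d_H=0$ (with $h>0$) it forces $-h\,a<0$, i.e.\ instability, which is exactly why the condition ${\rm Im}(H\otimes W)\ne 0$ must be imposed.

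I expect the main obstacle to be the weight computation of the second step: pinning down the limit $\lambda_x(0)$ and the resulting character on $\wedge^a U$ via the greedy minimal-weight basis argument, including the sign, so that it matches the convention of Definition \ref{220}. The accompanying bookkeeping in the summation-by-parts of the third step, showing that a general $\lambda$ contributes a nonnegative combination of the single-subspace terms $m\,d_p-p\,a$, is elementary but must be carried out carefully. By contrast, the reduction to a scheme base and the passage to the numerical criterion are routine given Theorem \ref{206} and \cite[Theorem 2.1]{MuFoKir}.
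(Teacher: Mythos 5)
Your proposal is correct, but it takes a different route at the decisive step. The paper's own proof has two moves: (i) the same reduction you make, passing to an \'etale chart so that the Grassmannian over the algebraic space $S$ is replaced by the classical one over a scheme (this is exactly the mechanism of Corollary \ref{222} via Theorem \ref{206}); and then (ii) a direct citation of \cite[Proposition 1.14]{Simp2}, which is precisely the stated subspace criterion. You instead reprove the cited result: you invoke the classical numerical criterion \cite[Theorem 2.1]{MuFoKir} (correctly noting that Proposition \ref{221} is not applicable since the ${\rm SL}(V)$-action on the Grassmannian is not proper), compute the Hilbert--Mumford weight of a one-parameter subgroup through the filtration $d_p=\dim q(V_{\le p}\otimes W)$, and use Abel summation plus the two-step subgroups $\lambda_H$ to collapse the criterion onto single subspaces. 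I checked your formulas: the identity $\mu^{\mathscr{L}}(x,\lambda)=\frac{1}{m}\sum_{p=1}^{m-1}\delta_p\bigl(m\,d_p-p\,a\bigr)$ follows correctly from $\mu^{\mathscr{L}}(x,\lambda)=-\sum_p\rho_p(d_p-d_{p-1})$ and the traceless condition, the two-step subgroup indeed yields $m\,d_H-h\,a$, and the case $d_H=0$ correctly accounts for the condition ${\rm Im}(H\otimes W)\neq 0$. This is in substance the standard proof of Simpson's proposition (compare \cite[Lemma 4.4.4]{HuLe}). Concerning the sign issue you flag as the main obstacle: it is real (whether the limit $\lambda_x(0)$ concentrates on the minimal- or maximal-weight graded piece depends on whether the action is by precomposition with $\lambda(t)$ or $\lambda(t)^{-1}$), but it is also harmless here, since replacing $\lambda$ by $\lambda^{-1}$ only permutes the set of one-parameter subgroups over which the criterion quantifies, so the resulting semistability condition is unaffected. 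What the two approaches buy: the paper's citation is shorter and stays consistent with its general policy of importing scheme-level GIT statements after the \'etale reduction; yours is self-contained and exposes the actual GIT mechanism, at the cost of the weight bookkeeping you describe.
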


\begin{proof}
Note that the grassmannian we consider here is over an algebraic space $S$, which means that it is an algebraic space. However, Proposition \ref{222} tells us that the grassmannian in this case has the same property as in the case of schemes (see \cite[Proposition 4.3]{MuFoKir}). Therefore, this lemma is implied by \cite[Proposition 1.14]{Simp2}.
\end{proof}

Based on the above lemmas, we prove the following theorem.

\begin{thm}\label{320}
A point $[V \otimes \mathcal{G} \rightarrow \mathcal{F}] \in Q^{\mathcal{H}}$ is semistable (resp. stable) with respect to the action of $\text{SL}(V)$ and the line bundle $\mathscr{L}_N$, if and only if $\mathcal{F}$ is a $\mathcal{H}$-semistable (resp. $\mathcal{H}$-stable) sheaf of pure dimension $d$ and the map $V \rightarrow H^0(X,F_{\mathcal{H}}(\mathcal{F})(N))$ is an isomorphism.
\end{thm}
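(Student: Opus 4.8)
The plan is to test GIT-(semi)stability via the Hilbert–Mumford criterion (Proposition~\ref{221}) after transporting the question to the Grassmannian along the $\mathrm{SL}(V)$-equivariant closed embedding
\[
Q^{\mathcal{H}} \hookrightarrow {\rm Grass}(V \otimes W, P(N)), \qquad W = H^0(X/S, F_{\mathcal{H}}(\mathcal{G})(N)),
\]
of Corollary~\ref{310}, under which $\mathscr{L}_N$ is the restriction of the (ample, linearized) Pl\"ucker bundle. Since the embedding is a closed immersion and $\mathscr{L}_N$ is the restriction of an ample $\mathrm{SL}(V)$-linearized bundle, a point of $Q^{\mathcal{H}}$ is semistable (resp.\ stable) precisely when its image in the Grassmannian is. Lemma~\ref{319} then rewrites this as the single numerical requirement that, for every nontrivial proper subspace $H \subseteq V$,
\[
\frac{\dim(H)}{\dim({\rm Im}(H \otimes W))} \leq \frac{\dim(V)}{\dim(U)}
\]
(with strict inequality in the stable case), where $U$ is the quotient attached to the Grassmannian point. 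The theorem thus reduces to matching this family of inequalities against the defining inequalities $p_{\mathcal{H}}(\mathcal{F}') \leq p_{\mathcal{H}}(\mathcal{F})$ of $\mathcal{H}$-(semi)stability.

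The bridge between the two sides is a dictionary between subspaces of $V$ and subsheaves of $\mathcal{F}$. To a subspace $H \subseteq V$ I would associate the subsheaf $\mathcal{F}_H \subseteq \mathcal{F}$ generated by $H$, that is, the image of the composite $H \otimes \mathcal{G} \hookrightarrow V \otimes \mathcal{G} \xrightarrow{q} \mathcal{F}$. Applying the exact functor $F_{\mathcal{H}}(-)(N)$ and passing to global sections identifies ${\rm Im}(H \otimes W)$ with a subspace of $H^0(X/S, F_{\mathcal{H}}(\mathcal{F}_H)(N))$; the point of the construction is that, for $N$ chosen large uniformly over the whole family of subsheaves that can occur (bounded by Corollary~\ref{317}), higher cohomology vanishes and this becomes an equality $\dim({\rm Im}(H \otimes W)) = P_{\mathcal{H}}(\mathcal{F}_H, N)$. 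Conversely, to a saturated subsheaf $\mathcal{F}' \subseteq \mathcal{F}$ I would attach $H = V \cap H^0(X/S, F_{\mathcal{H}}(\mathcal{F}')(N))$; the defining condition (1) of $Q^{\mathcal{H}}$, that $V \xrightarrow{\sim} H^0(X/S, F_{\mathcal{H}}(\mathcal{F})(N))$ is an isomorphism, guarantees that these two operations are mutually compatible and that $\dim(H)$ reads off the number of sections of $\mathcal{F}'$ at level $N$.

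With this dictionary the inequality of Lemma~\ref{319} becomes a comparison of the values at $N$ of modified Hilbert polynomials, and the remaining work is to show that this comparison at the single integer $N$ is equivalent to the comparison $p_{\mathcal{H}}(\mathcal{F}_H) \leq p_{\mathcal{H}}(\mathcal{F})$ of the reduced polynomials themselves. Here I would invoke the Le Potier--Simpson type estimates underlying Corollary~\ref{317} together with boundedness: only finitely many Hilbert polynomials occur among the $\mathcal{F}_H$, so for $N$ large the sign of $\dim(H)\cdot P_{\mathcal{H}}(\mathcal{F}) - \dim(V)\cdot P_{\mathcal{H}}(\mathcal{F}_H)$ evaluated at $N$ faithfully records the ordering of the reduced polynomials, while the uniform bound on $h^0$ of subsheaves lets me replace an arbitrary destabilizing subsheaf by one generated by its sections. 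Running the implication in both directions — a destabilizing subsheaf produces a destabilizing $H$, and conversely — gives the equivalence for semistability, and tracking strict inequalities gives it for stability.

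Finally I would dispose of the two structural conditions. Purity of $\mathcal{F}$ at a semistable point follows from Lemma~\ref{318}: a nonzero torsion subsheaf $T_{d-1}(\mathcal{F})$ would, through the dictionary above, contribute a subspace violating the Grassmannian inequality, so semistable points carry pure sheaves, and one replaces $\mathcal{F}$ by the pure sheaf $\mathcal{K}$ of Lemma~\ref{318} without altering $P_{\mathcal{H}}$. The isomorphism $V \xrightarrow{\sim} H^0(X/S, F_{\mathcal{H}}(\mathcal{F})(N))$ is built into membership in $Q^{\mathcal{H}}$, hence automatic there, while for the converse one checks that an $\mathcal{H}$-semistable $\mathcal{F}$ with the prescribed modified Hilbert polynomial lands in $Q^{\mathcal{H}}$ because $N$ was chosen past the uniform regularity bound of \S3.4. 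I expect the main obstacle to be exactly the uniformity in the third paragraph: arranging a single $N$ that works simultaneously for $\mathcal{F}$, all of its generated subsheaves, and the passage from ``value at $N$'' to ``reduced Hilbert polynomial,'' which is where the Le Potier--Simpson estimates and Corollary~\ref{317} must be deployed with care.
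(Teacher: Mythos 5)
Your overall strategy --- embed $Q^{\mathcal{H}}$ into a Grassmannian, apply Lemma \ref{319}, set up a dictionary between subspaces $H \subseteq V$ and the subsheaves of $\mathcal{F}$ they generate, and settle purity via Lemma \ref{318} --- is exactly the strategy of the paper. However, there is a genuine gap in your setup: you use a \emph{single} parameter $N$ both for the twist defining $V$ and $\mathcal{G} = \mathcal{H} \otimes \pi^* \mathcal{O}_X(-N)$ and for the Grassmannian embedding, taking $W = H^0(X/S, F_{\mathcal{H}}(\mathcal{G})(N))$ and target ${\rm Grass}(V \otimes W, P(N))$. With this choice the numerical criterion of Lemma \ref{319} becomes vacuous. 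Indeed, by the projection formula $F_{\mathcal{H}}(\mathcal{G})(N) \cong \pi_* \mathcal{H}om(\mathcal{H},\mathcal{H})$, so $W \cong {\rm Hom}(\mathcal{H},\mathcal{H})$ contains ${\rm id}_{\mathcal{H}}$, and the evaluation map $V \otimes W \rightarrow H^0(X/S, F_{\mathcal{H}}(\mathcal{F})(N))$ sends $v \otimes {\rm id}_{\mathcal{H}}$ to $\alpha(v)$. Hence ${\rm Im}(H \otimes W) \supseteq \alpha(H)$ for every subspace $H \subseteq V$, so $\dim {\rm Im}(H \otimes W) \geq \dim H$; moreover, at points of $Q^{\mathcal{H}}$ condition (1) gives $\dim U = \dim H^0(X/S,F_{\mathcal{H}}(\mathcal{F})(N)) = P(N) = \dim V$. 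The inequality of Lemma \ref{319} therefore reads $\dim H \leq \dim {\rm Im}(H \otimes W)$ with right-hand ratio equal to $1$, and it holds automatically at \emph{every} point of $Q^{\mathcal{H}}$, whether or not $\mathcal{F}$ is $\mathcal{H}$-semistable. This kills the direction ``GIT-semistable $\Rightarrow$ $\mathcal{H}$-semistable,'' and no refinement of the Le Potier--Simpson estimates can repair it, because the failure is an identity rather than a loose estimate.

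The fix --- and this is what the paper actually does, following Simpson and Huybrechts--Lehn --- is to introduce a \emph{second}, independent parameter $M \gg N$: embed $Q^{\mathcal{H}} \hookrightarrow {\rm Grass}(V \otimes W, P(M))$ with $W = H^0(X/S, F_{\mathcal{H}}(\mathcal{G})(M))$ (the paper's proof of Theorem \ref{320} opens with exactly this, even though the line bundle is still denoted $\mathscr{L}_N$ in the statement). Then the right-hand side of the criterion is the nontrivial ratio $P(N)/P(M)$, the left-hand denominator becomes $\dim {\rm Im}(H \otimes W) = P_{\mathcal{H}}(\mathcal{F}',M)$ for $M$ large (where $\mathcal{F}'$ is the subsheaf generated by $H$), and the resulting comparison
\begin{align*}
\frac{h^0(X/S, F_{\mathcal{H}}(\mathcal{F}')(N))}{P_{\mathcal{H}}(\mathcal{F}',M)} \leq \frac{P(N)}{P(M)}
\end{align*}
at the two distinct levels $N$ and $M$ is what genuinely encodes the inequality of reduced modified Hilbert polynomials. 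This two-parameter structure also dissolves the circularity you flagged: $N$ is fixed first using boundedness of the semistable family (Corollary \ref{317}), and $M$ is then chosen large depending on $N$ and on the bounded family of all quotients parametrized by the projective space ${\rm Q}(V \otimes \mathcal{G},\mathcal{X},P)$. With that correction, the rest of your outline (the dictionary in both directions, and the purity argument via Lemma \ref{318}, which in the paper requires showing that the map $V \otimes \mathcal{G} \rightarrow \mathcal{K}$ factors through $\mathcal{F}$ and forces $\mathcal{F} \cong \mathcal{K}$) proceeds as in the paper.
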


\begin{proof}
Take $M$ large enough such that $Q^{\mathcal{H}}$ is embedded into ${\rm Grass}(V \otimes W ,P(M))$, where $W=H^0(X/S,  F_{\mathcal{H}}( \mathcal{G})(M) )$. Let $H$ be a non-trivial proper subspace of $V$ such that the image of $H \otimes W$ is non-empty. Let $\mathcal{F}'$ be the image of $H \otimes W$. We have
\begin{align*}
0 \rightarrow \mathcal{F}'' \rightarrow H \otimes \mathcal{G} \rightarrow \mathcal{F}' \rightarrow 0,
\end{align*}
where $\mathcal{F}''$ is the kernel of the quotient $H \otimes \mathcal{G} \rightarrow \mathcal{F}'$. Since $m$ is a large enough integer, we can assume that
\begin{align*}
h^0(X/S, F_{\mathcal{H}}(\mathcal{F}')(M) )=p_{\mathcal{H}}(\mathcal{F}',M), \quad h^1(X/S,F_{\mathcal{H}}(\mathcal{F}'')(M))=0.
\end{align*}
We have a surjective morphism
\begin{align*}
H \otimes W \rightarrow H^0(X/S, F_{\mathcal{H}}(\mathcal{F}'(M)) ) \rightarrow 0.
\end{align*}
By assumption, we know that $\mathcal{F}$ is $\mathcal{H}$-semistable, which means that
\begin{align*}
\frac{h^0(\mathcal{F}'(N))}{r(\mathcal{F}')} \leq \frac{h^0(\mathcal{F}(N))}{r(\mathcal{F})}.
\end{align*}
Thus, if the integer $M$ is large enough, we have
\begin{align*}
\frac{h^0(\mathcal{F}'(N))}{ P_{\mathcal{H}}(\mathcal{F}',M)  } \leq \frac{h^0(\mathcal{F}(N))}{ P_{\mathcal{H}}(\mathcal{F},M)   },
\end{align*}
and then,
\begin{align*}
\frac{\dim(H)}{\dim({\rm Im}(H \otimes W))}=\frac{h^0(\mathcal{F}'(N))}{ P_{\mathcal{H}}(\mathcal{F}',M)  } \leq \frac{P(N)}{ P(M) }=\frac{\dim(V)}{\dim(U)}.
\end{align*}
This inequality holds for any non-trivial proper subspace $H$ of $V$. By Lemma \ref{319}, the point $[V \otimes \mathcal{G} \rightarrow \mathcal{F}]$ is semistable.

Now we consider another direction. Let $[\rho: V \otimes \mathcal{G} \rightarrow \mathcal{F}]$ be semistable in the sense of GIT. We will prove that $\mathcal{F}$ is a pure $\mathcal{H}$-semistable sheaf and the map $V \rightarrow {\rm H}^0(X,F_{\mathcal{H}}(\mathcal{F})(N))$ is an isomorphism.

We first suppose that $\mathcal{F}$ is pure. Let $\mathcal{F}'$ be a subsheaf of $\mathcal{F}$. By taking the pullback of the following diagram
\begin{center}
\begin{tikzcd}
    V' \arrow[d, dashed, hook] \arrow[r, dashed] & \mathcal{F}' \otimes \mathcal{G}^{\vee} \arrow[d, hook]\\
    V \arrow[r,"\rho"] & \mathcal{F} \otimes \mathcal{G}^{\vee}
\end{tikzcd}
\end{center}
we find a subspace $V' \subseteq V$ such that the quotient $[V' \otimes \mathcal{G} \rightarrow \mathcal{F}']$ is induced by $[\rho]$. Furthermore, $\mathcal{F}$ and $\mathcal{F}'$ have the same regularity and $V' \cong H^0(F_{\mathcal{H}}(\mathcal{F}')(N))$. With the same notation as in the first part of the proof, by taking $N$ and $M$ large enough, we have
\begin{align*}
\frac{h^0(F_{\mathcal{H}}(\mathcal{F}')(N))}{ P_{\mathcal{H}}(\mathcal{F}',M)  }=\frac{\dim(H)}{\dim({\rm Im}(H \otimes W))} \leq \frac{\dim(V)}{\dim(U)}= \frac{P(N)}{ P(M) }.
\end{align*}
This inequality gives us the following
\begin{align*}
\frac{ P_{\mathcal{H}}(\mathcal{F}',N) }{r(\mathcal{F}')} \leq \frac{ P_{\mathcal{H}}(\mathcal{F},N) }{r(\mathcal{F})}, \quad M \gg 0.
\end{align*}
Therefore, $\mathcal{F}$ is $\mathcal{H}$-semistable. Taking $N$ large enough, the induced map
\begin{align*}
V \rightarrow {\rm H}^0(X,F_{\mathcal{H}}(\mathcal{F})(N))
\end{align*}
is surjective. By counting the dimension, this map is an isomorphism. This finishes the proof when $\mathcal{F}$ is pure.

To complete the proof of this theorem, we will show that given any semistable (GIT) point $[\rho]$, the sheaf $\mathcal{F}$ is pure. By Lemma \ref{318}, there exists a pure sheaf $\mathcal{K}$ and a morphism $\varrho: \mathcal{F} \rightarrow \mathcal{K}$ such that
\begin{itemize}
\item the kernel of $\varrho$ is $T_{d-1}(\mathcal{F})$, i.e. the map $\varrho$ is generically injective;
\item $P_{\mathcal{H}}(\mathcal{F})=P_{\mathcal{H}}(\mathcal{K})$.
\end{itemize}
The map $\varrho$ induces an injective map
\begin{align*}
V \xrightarrow{\cong} H^0(X/S, F_{\mathcal{H}}(\mathcal{F})(N)  )\rightarrow H^0(X/S, F_{\mathcal{H}}(\mathcal{K})(N)  ).
\end{align*}
Let $\mathcal{K}''$ be any quotient of $\mathcal{K}$, and denote by $\mathcal{F}'$ the kernel of the composition $\mathcal{F} \rightarrow \mathcal{K} \rightarrow \mathcal{K}''$. We have the following exact sequence
\begin{align*}
0 \rightarrow \mathcal{F}' \rightarrow \mathcal{F} \rightarrow \mathcal{K} \rightarrow \mathcal{K}'' \rightarrow 0.
\end{align*}
This implies
\begin{align*}
h^0(F_{\mathcal{H}}(\mathcal{K}'')(N)) & \geq h^0(F_{\mathcal{H}}(\mathcal{F})(N)) - h^0(F_{\mathcal{H}}(\mathcal{F}')(N))\\
& \geq (r(\mathcal{F})-r(\mathcal{F}')) p_{\mathcal{H}}(\mathcal{F},N) = r(\mathcal{K}'')  p_{\mathcal{H}}(\mathcal{F},N).
\end{align*}
Therefore, $\mathcal{K}$ is $p$-semistable. Furthermore, $V \cong h^0(F_{\mathcal{H}}(\mathcal{K})(N))$. Note that $\varrho$ induces an injection $V \rightarrow H^0(X/S, F_{\mathcal{H}}(\mathcal{K})(N)  )$. By counting the dimension, it is an isomorphism. This isomorphism means that the map $V \otimes \mathcal{G} \rightarrow \mathcal{K}$ factors through $\mathcal{F}$, i.e. the morphism $\varrho: \mathcal{F} \rightarrow \mathcal{K}$ is surjective. Since $P_{\mathcal{H}}(\mathcal{F})=P_{\mathcal{H}}(\mathcal{K})$, we have $\mathcal{F} \cong \mathcal{K}$. This means that $\mathcal{F}$ is pure.
\end{proof}

\begin{lem}\label{321}
Let $[V \otimes \mathcal{G} \rightarrow \mathcal{F}_i]$, $i=1,2$ be two points in $Q_{ss}^{\mathcal{H}}$. The closures of the corresponding orbits in $Q_{ss}^{\mathcal{H}}$ intersect if and only if $gr^{\rm JH}(\mathcal{F}_1) \cong gr^{\rm JH}(\mathcal{F}_2)$.
\end{lem}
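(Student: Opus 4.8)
The plan is to identify, for each semistable point $[q\colon V \otimes \mathcal{G} \rightarrow \mathcal{F}]$, the unique closed orbit contained in the closure of its ${\rm SL}(V)$-orbit, and to show that this closed orbit is precisely the one corresponding to the polystable sheaf $gr^{\rm JH}(\mathcal{F}) = \bigoplus_i gr_i$, where the $gr_i$ are the stable Jordan-H\"older factors of $\mathcal{F}$. Once this is established, both directions follow formally: two orbit closures in $Q_{ss}^{\mathcal{H}}$ meet if and only if they share their common unique closed orbit, and the closed orbit both determines and is determined by the isomorphism class of $gr^{\rm JH}$.

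First I would construct the degenerating one-parameter subgroup. Fix a Jordan-H\"older filtration $0 = \mathcal{F}^0 \subset \mathcal{F}^1 \subset \cdots \subset \mathcal{F}^\ell = \mathcal{F}$ with stable quotients $gr_i = \mathcal{F}^i / \mathcal{F}^{i-1}$, all sharing the reduced modified Hilbert polynomial $p_{\mathcal{H}}(\mathcal{F})$. Since $F_{\mathcal{H}}$ is exact and $N$ is chosen large enough that every sheaf in the bounded family is $N$-regular, applying $F_{\mathcal{H}}(-)(N)$ and taking global sections yields a filtration $0 = V^0 \subset V^1 \subset \cdots \subset V^\ell = V$ with $V^i = H^0(X/S, F_{\mathcal{H}}(\mathcal{F}^i)(N))$. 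After normalizing the weights to have total trace zero, this filtration defines a one-parameter subgroup $\lambda\colon \mathbb{G}_m \rightarrow {\rm SL}(V)$, and the standard Rees-module degeneration shows that $\lim_{t \to 0} \lambda(t)\cdot[q]$ is the point $[V \otimes \mathcal{G} \rightarrow \bigoplus_i gr_i]$. Thus the closure of the orbit of $[\mathcal{F}]$ contains the orbit of $[gr^{\rm JH}(\mathcal{F})]$.

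Next I would show that the orbit of a point corresponding to a polystable sheaf is closed in $Q_{ss}^{\mathcal{H}}$. By Theorem \ref{320} every point in the closure is again semistable, corresponding to a semistable sheaf with the same modified Hilbert polynomial $P$; since a polystable sheaf admits no proper degeneration altering its Jordan-H\"older factors, any further limit under a one-parameter subgroup returns an isomorphic sheaf, so the orbit is closed. This identifies $[gr^{\rm JH}(\mathcal{F})]$ as the unique closed orbit in the closure of $[\mathcal{F}]$, uniqueness of the closed orbit in a semistable orbit closure being the usual consequence of the Hilbert-Mumford criterion, available here through Proposition \ref{221}. For the ``if'' direction, an isomorphism $gr^{\rm JH}(\mathcal{F}_1) \cong gr^{\rm JH}(\mathcal{F}_2)$ places the two polystable points in the same ${\rm SL}(V)$-orbit, so both orbit closures contain it and therefore meet. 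For the ``only if'' direction, if the two closures meet at a point $z$, then $\overline{\text{orb}(z)}$ lies in both and contains a unique closed orbit $C$; since $C$ is then a closed orbit inside each $\overline{\text{orb}([\mathcal{F}_i])}$, it must coincide with each $[gr^{\rm JH}(\mathcal{F}_i)]$, so these agree as orbits, and since points of $Q^{\mathcal{H}}$ lying in a single ${\rm SL}(V)$-orbit parametrize isomorphic quotient sheaves, $gr^{\rm JH}(\mathcal{F}_1) \cong gr^{\rm JH}(\mathcal{F}_2)$.

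The main obstacle is the Rees-module computation of the limit $\lim_{t \to 0}\lambda(t)\cdot[q]$, carried out on the projective Deligne-Mumford stack over an algebraic space: one must check that the $\mathbb{G}_m$-flat degeneration of $\mathcal{F}$ attached to the filtration has associated graded quotient exactly $\bigoplus_i gr_i$, and that this identification is compatible with the embedding into the Grassmannian defining $\mathscr{L}_N$. This reduces, via the exactness of $F_{\mathcal{H}}$ and the $N$-regularity supplied by Corollary \ref{317}, to the analogous statement for $F_{\mathcal{H}}(\mathcal{F})$ on the coarse space $X$, where it is the familiar computation from the classical theory; the remaining bookkeeping is the passage to the algebraic space $S$, which is handled by working on an \'etale chart as in Proposition \ref{306} and Lemma \ref{312}.
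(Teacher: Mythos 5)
Your proposal is correct and follows essentially the same route as the paper: the core step in both is to induce a filtration of $V$ from a Jordan-H\"older filtration of $\mathcal{F}$ and to degenerate $[\rho]$ to $[\bar\rho\colon V\otimes\mathcal{G}\to gr^{\rm JH}(\mathcal{F})]$ along a one-parameter subgroup, exactly the construction of Huybrechts--Lehn (Lemma 4.4.3), which is what the paper cites. The only difference is that you spell out the standard GIT bookkeeping (closedness of orbits of polystable points and uniqueness of the closed orbit in an orbit closure) that the paper's proof leaves implicit behind the phrase ``it is enough to show.''
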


\begin{proof}
Let $[\rho: V \otimes \mathcal{G} \rightarrow \mathcal{F}] \in Q_{ss}^{\mathcal{H}}$ be a point. Let
\begin{align*}
0 = {\rm JH}_0(\mathcal{F}) \subseteq {\rm JH}_1(\mathcal{F}) \subseteq \dots \subseteq {\rm JH}_l(\mathcal{F})= \mathcal{F}
\end{align*}
be the Jordan-H\"older filtration of $\mathcal{F}$. To prove the lemma, it is enough to show that we can construct a quotient $[\bar{\rho}: V \otimes \mathcal{G} \rightarrow gr^{\rm JH}(\mathcal{F})]$  such that $[\bar{\rho}]$ is included in the closure of the orbit of $[\rho]$.

Since $N$ is a large enough integer, we can assume that $F_{\mathcal{H}}({\rm JH}_i(\mathcal{F}))(N)$ is globally generated, and let $V_{\leq i}$ be the subspace of $V$ such that the quotient $[V_{\leq i} \otimes \mathcal{G} \rightarrow {\rm JH}_i(\mathcal{F})]$ is induced by $[\rho]$ and $V_{\leq i} \cong H^0(X/S, F_{\mathcal{H}}({\rm JH}_i(\mathcal{F}))(N))$. Let $V_i: = V_{\leq i}/ V_{\leq i-1}$. We have the induced surjections $V_i \otimes \mathcal{G} \rightarrow gr_i^{\rm JH}(\mathcal{F})$. Summing up these induced surjections, we get a point $[\bar{\rho}: V \otimes \mathcal{G} \rightarrow gr^{\rm JH}(\mathcal{F})]$.

To show that $[\bar{\rho}]$ is in the closure of the orbit of $[\rho]$, it suffices to find an one-parameter subgroup $\lambda$ such that $\lim\limits_{t \rightarrow 0} \lambda(t) \cdot [\rho]=[\bar{\rho}]$. The construction of such an one-parameter subgroup $\lambda$ is the same as \cite[Lemma 4.4.3]{HuLe}. Therefore, the point $[\bar{\rho}]$ is included in the closure of the orbit of $[\rho]$ in $Q^{ss}$.
\end{proof}

As we discussed above, a point $[V \otimes \mathcal{G} \rightarrow \mathcal{F}] \in Q^{\mathcal{H}}$ is $\mathcal{H}$-semistable if and only if it is semistable in the sense of GIT. Therefore, a GIT quotient exists for $Q^{\mathcal{H}}$, and the semistable locus is exactly $Q_{ss}^{\mathcal{H}}$ by Theorem \ref{217} and Theorem \ref{320}. Denote by
\begin{align*}
\mathcal{M}^{ss}(\mathcal{H},\mathcal{O}_X(1),P):=Q^{\mathcal{H}}_{ss}/{\rm SL}(V)
\end{align*}
the universal good quotient with respect to the group action ${\rm SL}(V)$ and line bundle $\mathscr{L}_N$.

\begin{thm}\label{322}
$\mathcal{M}^{ss}(\mathcal{H},\mathcal{O}_X(1),P)$ is the coarse moduli space for the $S$-equivalence classes of $\mathcal{H}$-semistable coherent sheaves with modified Hilbert polynomial $P$, and $\mathcal{M}^{ss}(\mathcal{H},\mathcal{O}_X(1),P)$ is an algebraic space projective over $S$.
\end{thm}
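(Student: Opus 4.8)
The plan is to establish the two assertions in turn: that $\mathcal{M}^{ss}(\mathcal{H},\mathcal{O}_X(1),P)$ corepresents the moduli functor of $S$-equivalence classes of $\mathcal{H}$-semistable sheaves, and that it is projective over $S$; for the latter I would reduce to the scheme case over an \'etale chart of $S$. First I would record the structural input already assembled. By Theorem \ref{320}, a point $[V\otimes\mathcal{G}\to\mathcal{F}]\in Q^{\mathcal{H}}$ is GIT-semistable for the pair $(\mathrm{SL}(V),\mathscr{L}_N)$ exactly when $\mathcal{F}$ is $\mathcal{H}$-semistable, so $Q^{\mathcal{H}}_{ss}$ coincides with the semistable locus $(Q^{\mathcal{H}})^{ss}(\mathscr{L}_N)$. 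Theorem \ref{217} then provides the uniform good quotient $\phi: Q^{\mathcal{H}}_{ss}\to\mathcal{M}^{ss}$ as an algebraic space over $S$, and Corollary \ref{219} yields a line bundle $M$ on $\mathcal{M}^{ss}$ with $\phi^{*}M=\mathscr{L}_N$.

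For the coarse moduli property I would argue in two steps. On geometric points, the points of the good quotient $\mathcal{M}^{ss}$ are the closed $\mathrm{SL}(V)$-orbits in $Q^{\mathcal{H}}_{ss}$, and by Lemma \ref{321} two orbits have meeting closures if and only if $gr^{\mathrm{JH}}(\mathcal{F}_1)\cong gr^{\mathrm{JH}}(\mathcal{F}_2)$, that is, if and only if the sheaves are $S$-equivalent; since the closed orbit in each fibre is the one of the polystable representative $gr^{\mathrm{JH}}(\mathcal{F})$, the closed points of $\mathcal{M}^{ss}$ are in bijection with $S$-equivalence classes. For corepresentability, given any flat family $\mathcal{F}_T$ of $\mathcal{H}$-semistable sheaves of modified Hilbert polynomial $P$ over an $S$-scheme $T$, boundedness (Corollary \ref{317}) guarantees that for the chosen large $N$ the pushforward to $T$ of $F_{\mathcal{H}}(\mathcal{F}_T)(N)$ is locally free of rank $P(N)$; an \'etale-local trivialization of this pushforward by $V\otimes\mathcal{O}_T$ produces a classifying morphism $T\to Q^{\mathcal{H}}_{ss}$, and $\phi$ being $\mathrm{SL}(V)$-invariant makes the composite $T\to\mathcal{M}^{ss}$ independent of the trivialization, so these glue to a canonical morphism $T\to\mathcal{M}^{ss}$. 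This natural transformation, together with the bijection on points, is precisely the statement that $\mathcal{M}^{ss}$ is a coarse moduli space.

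For projectivity I would pass to the closure and then to a chart. By construction $Q^{\mathcal{H}}$ embeds into $\mathrm{Grass}_S(V\otimes W,P(M))$, which is projective over $S$ as an algebraic space via the Pl\"ucker embedding, so $\mathcal{M}^{ss}$ is a good quotient of a locally closed subspace of a projective $S$-space by a reductive group. Choosing a surjective \'etale morphism $U\to S$ with $U$ a scheme, over $U$ the construction becomes the classical one: $Q^{\mathcal{H}}_U$ is a quasi-projective $U$-scheme inside the projective $U$-scheme $\mathrm{Grass}_U(V\otimes W,P(M))$, and geometric invariant theory \cite{MuFoKir} produces the good quotient of its semistable locus as a projective $U$-scheme, equal to $(\mathcal{M}^{ss})_U$. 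Since representability by schemes and projectivity of a morphism of algebraic spaces are tested \'etale-locally on the base in the sense of \S\,2.1, and since $M$ restricts on each chart to the ample bundle polarizing $(\mathcal{M}^{ss})_U$, the morphism $\mathcal{M}^{ss}\to S$ is projective; in particular $\mathcal{M}^{ss}$ is separated.

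Finally, the main obstacle is the input that makes the previous paragraph legitimate, namely that $Q^{\mathcal{H}}_{ss}$ is the full GIT-semistable locus of the projective closure $\overline{Q^{\mathcal{H}}}$, equivalently that $\mathcal{M}^{ss}$ is proper rather than merely separated over $S$. On a chart this is Langton's semistable reduction theorem \cite{HuLe}: a one-parameter family of semistable sheaves degenerating inside $\overline{Q^{\mathcal{H}}}$ can be modified, without changing its generic member, so that the special fibre is again an $\mathcal{H}$-semistable sheaf lying in $Q^{\mathcal{H}}_{ss}$, which shows that no semistable points are lost in the boundary $\overline{Q^{\mathcal{H}}}\setminus Q^{\mathcal{H}}$. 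Verifying this valuative criterion compatibly over the \'etale charts of $S$, and then descending the resulting ample line bundle $M$ from the charts to the algebraic space $\mathcal{M}^{ss}$, is the technical heart of the argument; the bijection on $S$-equivalence classes and the corepresentability are then formal consequences of Theorems \ref{217} and \ref{320} and of Lemma \ref{321}.
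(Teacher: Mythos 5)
Your assembly is exactly the paper's intended argument: in fact the paper states Theorem~\ref{322} with no proof at all, relying on the preceding sentence ("a GIT quotient exists for $Q^{\mathcal{H}}$, and the semistable locus is exactly $Q^{\mathcal{H}}_{ss}$ by Theorem~\ref{217} and Theorem~\ref{320}") together with Lemma~\ref{321} for the $S$-equivalence statement. Your first two paragraphs reproduce this assembly and go further than the paper by actually verifying the coarse moduli property --- the identification of geometric points with closed orbits via Lemma~\ref{321}, and corepresentability via classifying morphisms obtained from \'etale-local trivializations of the pushforward of $F_{\mathcal{H}}(\mathcal{F}_T)(N)$, with independence of the trivialization coming from ${\rm SL}(V)$-invariance of $\phi$ (modulo the standard remark that two trivializations differ by ${\rm GL}(V)$ and the center acts trivially on Quot points). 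None of this is written out in the paper.

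The important part of your proposal is the last paragraph, and you should understand that what you flagged there is a gap in the paper, not merely a deferred computation in your own write-up. Theorem~\ref{217} produces only a uniform categorical quotient of $X^{ss}(L)$ as an algebraic space; it asserts nothing about properness, and indeed $Q^{\mathcal{H}}_{ss}$ is only an open subspace of the projective space ${\rm Q}(V\otimes\mathcal{G},\mathcal{X},P)$ (Propositions~\ref{306} and~\ref{308}), while Definition~\ref{216} tests semistability with invariant sections of $\mathscr{L}_N^n$ over $Q^{\mathcal{H}}$ itself rather than over a projective closure. Consequently projectivity of $\mathcal{M}^{ss}$ over $S$ follows formally from nothing proved in the paper; the paper simply asserts it. The standard repair is the one you sketch: run GIT on the closure $\overline{Q^{\mathcal{H}}_{ss}}$ inside the projective ambient space (or the Grassmannian), so the quotient is automatically projective, and then prove that every GIT-semistable point of the closure already lies in $Q^{\mathcal{H}}_{ss}$; this is Theorem~4.3.3 of \cite{HuLe} in the scheme case and its analogue in \cite{Nir}. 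Two caveats on your sketch. First, you conflate two distinct routes: Langton's semistable reduction gives the valuative criterion of properness for the quotient, whereas the statement that no semistable points occur in the boundary $\overline{Q^{\mathcal{H}}}\setminus Q^{\mathcal{H}}$ is the closure-comparison theorem proved by extending the GIT analysis of Theorem~\ref{320} to boundary points; either suffices, but they are different arguments and the second is what makes the first paragraph of your projectivity discussion legitimate as stated. Second, calling $\phi$ a uniform good quotient on all of $Q^{\mathcal{H}}_{ss}$ repeats an imprecision of the paper itself: Theorem~\ref{217} yields a good quotient only over the stable locus, and a categorical one elsewhere. In outline your proposal is correct, matches the paper where the paper has content, and correctly isolates as the technical heart precisely the claim the paper leaves unproven.
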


\bigskip
\noindent\small{\textsc{Department of Mathematics, South China University of Technology}\\
381 Wushan Rd, Tianhe Qu, Guangzhou, Guangdong, China}\\
\emph{E-mail address}:  \texttt{hsun71275@scut.edu.cn}

\end{document}